\numberwithin{equation}{section}
\theoremstyle{definition}
 \newtheorem{definition}{Definition}[section]
 \newtheorem{remark}{Remark}[section]
\theoremstyle{plain}
 \newtheorem{theorem}{Theorem}[section]
 \newtheorem{lemma}{Lemma}[section]
 \newtheorem*{lemma*}{Lemma}
 \newtheorem{corollary}{Corollary}[section]
 \newtheorem{proposition}{Proposition}[section]
\begin{document}
\newcommand{\Es}{\EuScript}
\newcommand{\scrscr}{\scriptscriptstyle}
\newcommand{\diag}{\mathop{\mathrm{diag}}}
\newcommand{\mE}{{\mathcal{E}}}
\newcommand{\mV}{{\mathcal{V}}}

\newcommand{\R}{{\mathbb R}}
\newcommand{\N}{{\mathbb N}}
\newcommand{\mC}{{\mathbb C}}
\newcommand{\Z}{{\mathbb Z}}
\newcommand{\Rn}{{{\mathbb R}^n}}
\newcommand{\Rm}{{{\mathbb R}^m}}
\newcommand{\Cn}{{{\mathbb C}^n}}
\newcommand{\Cm}{{{\mathbb C}^m}}

\newcommand{\T}{{\mathop{\mathrm{T}}}}
\newcommand{\tr}{\mathop{\mathrm{tr}}\nolimits}
\newcommand{\sgn}{\mathop{\mathrm{sgn}}}
\newcommand{\Ker}{\mathop{\mathrm{Ker}}}
\newcommand{\rank}{\mathop{\mathrm{rank}}}
\newcommand{\res}{\mathop{\mathrm{Res}}}
\newcommand{\ee}{\mathrm{e}}
\newcommand{\ii}{\mathrm{i}}
\newcommand{\dd}{\textup{d}}
\newcommand{\la}{\lambda}
\renewcommand{\Im}{\operatorname{Im}}
\renewcommand{\Re}{\operatorname{Re}}
\newcommand{\ord}{\mathrm{O}}
\newcommand{\grad}{\mathop{\mathrm{grad}}}

\title{\textbf{Qualitative analysis of nonregular differential-algebraic equations and the dynamics of gas networks}}
\author{\large \textbf{Maria Filipkovska}  \\
 \it\small  Friedrich-Alexander-Universit\"{a}t Erlangen-N\"{u}rnberg,\; Cauerstrasse 11, 91058 Erlangen, Germany\\
 \it\small  B. Verkin Institute for Low Temperature Physics and Engineering \\
 \it\small  of the National Academy of Sciences of Ukraine,\; Nauky Ave. 47, 61103 Kharkiv, Ukraine\\
 \small maria.filipkovska@fau.de;\; filipkovskaya@ilt.kharkov.ua}

\date{ }
\maketitle

\begin{abstract}
  The conditions for the existence, uniqueness and boundedness of global solutions, as well as ultimate boundedness of solutions, and the conditions for the blow-up of solutions of nonregular semilinear differential-algebraic equations are obtained. An example demonstrating the application of the obtained results is considered. Isothermal models of gas networks are proposed as applications
\end{abstract}

{\small\emph{Key words}:\; nonregular differential-algebraic equation, degenerate differential equation,  singular pencil, gas network, global solvability, boundedness of solutions, blow-up, dissipativity.}

\smallskip
{\small\emph{MSC2020}: 34A09, 34A12, 34C11, 34D23, 15A22.}

 \section{Introduction}

This paper deals with systems of equations which can be represented as a differential-algebraic equation (DAE) of the form
\begin{equation}\label{IntroDAE}
\frac{d}{dt}[Ax]+Bx=f(t,x),
 \end{equation}
where $A$, $B$ are linear operators from $\Rn$ into $\Rm$ or $m\times n$-matrices. Various systems consisting of ordinary differential equations (ODEs) \,(or partial differential equations (PDEs), which after applying spatial discretization become ODEs) and of algebraic equations (not containing a derivative) can be written in this form. Note that this type of DAEs includes underdetermined and overdetermined systems of equations. DAEs of the form \eqref{IntroDAE} are commonly referred to as nonregular (or singular) semilinear  DAEs. In general, they belong to the class of ODEs unsolved for the higher derivative of the unknown function and are also called descriptor systems or degenerate differential equations.

In the present paper, the conditions for the existence, uniqueness and boundedness of global solutions, as well as ultimate boundedness of solutions, and the conditions for the blow-up of solutions of nonregular semilinear DAEs are obtained both in the general form (Sections \ref{SectGlobSolv}--\ref{SectWeakerCond}) and in certain particular cases (Section \ref{Sect_chi_V}) which are convenient for practical application.

DAEs arise from the modelling of various systems and processes in control problems, gas industry, mechanics, radio engineering, chemical kinetics, economics and other fields (see, e.g., \cite{GNM,Riaza,BGHHST,Kunkel_Mehrmann,Vlasenko1} and references therein). It is well known that DAEs arise in electrical circuit theory (the use of
DAEs in electrical circuit modelling is described in detail in \cite{Riaza}; see also \cite{Kunkel_Mehrmann,Vlasenko1,Fil.UMJ,Fil,Fil.MPhAG,Fil.sing,Rut,rut-sing,RF1}).
Besides electrical networks, DAEs are also used in modelling other objects whose structure is described by directed graphs, e.g., gas and neural networks. In the papers \cite{Fil.sing,Fil.UMJ}, nonlinear electrical circuits described by singular (nonregular) semilinear DAEs were considered.  The present paper is focused on the DAEs describing the dynamics of gas networks in the isothermal case (Section \ref{Sect_GasNetDAE}). The theorems obtained in this paper allow one to carry out the qualitative analysis of the dynamics of gas networks described by DAEs of the form \eqref{IntroDAE}. The description of gas network models, including the construction of models in the form of DAEs, is presented in \cite{GNM,A-PerJ,BandaHerty,BGHHST,GU2018,Huc18,KSSTW22,MartinMM05}. Generally, the dynamics of a gas flow in a pipeline (for a single pipe) is modelled by PDEs, namely, by the isothermal Euler equations in the case considered in Section \ref{Sect_GasDAEsingle}, and by the equation of state for a gas, which is an algebraic equation. We apply the spatial discretization (described in \cite{Huc18,BGHHST}) for the isothermal Euler equations, which leads to a semilinear DAE. A similar discretization is used to obtain a DAE which describes the dynamics of flows in gas networks (Section \ref{Sect_GasDAEnetwork}) and arises from a system of differential and algebraic equations, which is presented in \cite{KSSTW22}.  The work \cite{KSSTW22} provides and studies the least-squares collocation method for the numerical solution of DAEs arising from gas network modelling.

Most of the works on DAEs are related to the study of regular DAEs: their structure, index, local solvability, the Lyapunov stability of their equilibrium positions and the development of numerical methods for solving them. Due to the large number of these works, it is not possible to list all of them, therefore, as an example, we refer to monographs \cite{Brenan-C-P,Kunkel_Mehrmann,Riaza,Vlasenko1} where the list of references can be found. Much fewer works deal with nonregular DAEs in general and with the global solvability of DAEs in particular. Nonregular DAEs have been studied by using the concept of the ``strangeness index'' in \cite{Kunkel_Mehrmann}. This concept generalizes the ``differentiation index'' (see \cite{Brenan-C-P}) to underdetermined and overdetermined systems of equations. We use the concept of an index only for a regular block of the characteristic pencil of the DAE \eqref{IntroDAE} (see Section \ref{Preliminaries}). To solve a singular linear time-invariant DAE, one usually uses the Weierstrass-Kronecker canonical form of a singular matrix pencil associated with the linear DAE, which is described in \cite{GantmaherII}. The solvability of nonregular time-varying linear DAEs with the use of a generalized canonical form and the application of the least squares method for their numerical solution has been studied in \cite{ChistyakovCh11}.  In the paper \cite{Chuiko18} the conditions for the solvability of the Cauchy problem for a nonregular time-varying linear DAE with the use of a generalized Green operator and the structure of the generalized Green operator of the Cauchy problem have been found. The conditions for the Lagrange stability and instability of nonregular semilinear DAE, which are a particular case of the conditions obtained in this paper, have been presented in \cite{Fil.UMJ}.  The local solvability of singular (nonregular) semilinear DAEs in Banach spaces has been studied in \cite{rut-sing}. Also, in \cite{rut-sing} the decomposition of a singular pencil into regular and purely singular components, which was called the RS-splitting of the pencil, and the block representations of the singular component in two special cases were presented.

In this paper, we use the special block form of a singular operator pencil \cite{Fil.sing,Fil.KNU2019}, which consists of the singular and regular blocks where zero and invertible blocks are separated out (see Section \ref{BlockStruct}). This block form is used to reduce the DAE with the singular characteristic pencil to a system of ordinary differential and algebraic equations (see Section \ref{DAEsingReduce}). The results from \cite{GantmaherII} were used when constructing this block form in \cite{Fil.sing,Fil.KNU2019}. The method for constructing the direct decompositions of spaces, which reduce the pencil to the required block form, and the corresponding projectors, is briefly described in \cite{Fil.sing} and is described in detail in \cite{Fil.KNU2019} (where results from \cite{Fil.sing} were generalized). In addition to the mentioned block form, differential inequalities (see Section \ref{LaSDiffIneq}) proposed in \cite{LaSal-Lef} when studying the continuation of solutions and Lagrange stability of ODEs, the Lyapunov type functions, the spectral projector introduced in \cite{Rut} and the method of T. Yoshizawa \cite{Yoshizawa}, as well as some other methods, are used to prove theorems from Sections \ref{SectGlobSolv}--\ref{SectWeakerCond} and the results presented in Section \ref{Sect_chi_V}.  An example demonstrating the application of the obtained results is given in Section \ref{ExGener}.

In the present paper, we use the following notations: $I_X$ is an identity operator in the space $X$; $\EuScript A^{(-1)}$ is a semi-inverse operator of the operator $\EuScript A$ \,($A^{-1}$ is an inverse operator of $A$); $\Ker(A)$ is the kernel (the null-space) of an operator $A$;
$\mathcal R(A)$ is the range of an operator $A$; $D^c$ is the complement of a set $D$; $\overline{D}$ is the closure of a set $D$;
$\mathrm{L}(X,Y)$ is the space of continuous linear operators from $X$ to $Y$; $\mathrm{L}(X,X)=\mathrm{L}(X)$, and similarly, $C((a,b),(a,b))=C(a,b)$;
$\delta_{ij}$ is the Kronecker delta; $X'$ is the conjugate space of $X$ (it is also called an adjoint or dual space); 
$A^\T$ is a transposed operator (i.e., the adjoint operator acting in real linear spaces to which the transposed matrix correspond) or a transposed matrix;  $\|\cdot\|$ denotes some norm in a finite-dimensional space (it will be clear from the context in which one), unless it is explicitly stated which norm is considered.
Also, a partial derivative with respect to $x$ will sometimes be denoted by $\partial_x:=\partial/\partial x$.

Note that if $t\in [a,b]$, $a,b\in \R$, $a\ne b$, then by a neighborhood $U_\delta(a)$ (a neighborhood $U_\delta(b)$) we mean a semi-open interval $[a,a+\delta)$ (a semi-open interval $(b-\delta,b]$), $0<\delta<b-a$,  instead of an open interval (in general, a neighborhood of a point $x$ is an open set containing $x$). In what follows, we will call $U_\delta(x_0)$ a neighborhood of the point $x_0\in X$ and denote by $U_\delta(x_0)$ the open ball $\{x\in X\mid \|x-x_0\|<\delta\}$, $\delta>0$,  except for the case mentioned above and the case when it is explicitly indicated that $U_\delta(x_0)$ is a closed neighborhood, that is, $U_\delta(x_0)=\{x\in X\mid \|x-x_0\|\le\delta\}$. 

 \section{Problem statement, definitions and preliminary constructions}\label{Preliminaries}

Consider an implicit differential equation
 \begin{equation}\label{DAE}
\frac{d}{dt}[Ax]+Bx=f(t,x),
 \end{equation} 
where $t\in [t_+,\infty)$, $t_+\ge 0$, $x\in\Rn$, $A,\, B\in \mathrm{L}(\Rn,\Rm)$ and $f\in C([t_+,\infty)\times \Rn,\Rm)$. 
In the case when $m\ne n$ or $m=n$ and the operator $A$ is noninvertible (degenerate), the equation \eqref{DAE} is called a \emph{differential-algebraic equation} (\emph{DAE}) or \emph{degenerate differential equation}. In the DAE terminology, equations of the form \eqref{DAE} are called \emph{semilinear}. For the considered equation, the initial condition (Cauchy condition) is given in the form
 \begin{equation}\label{ini}
x(t_0)=x_0\qquad  (t_0\ge t_+).
 \end{equation}

A DAE that contains the linear part $\frac{d}{dt}[Ax]+Bx$ such that the pencil $\lambda A+B$ is singular (see Definition~\ref{block-def.1}) is called  \emph{singular} or \emph{nonregular} (or \emph{irregular} \cite{Fil.UMJ}). The pencil $\lambda A+B$ corresponding to this linear part is called \emph{characteristic}.

If $\rank(\lambda A+B)=m<n$, then the DAE \eqref{DAE} corresponds to an underdetermined system of equations (that is, the number of equations is less than the number of unknowns).

If $\rank(\lambda A+B)=n<m$, then DAE \eqref{DAE} corresponds to an overdetermined system of equations (that is, the number of equations is greater than the number of unknowns).

  \smallskip
The function $x(t)$ is called a \emph{solution of the equation \eqref{DAE} on $[t_0,t_1)$}, $t_1\le \infty $, if $x(t)\in C([t_0,t_1 ),\Rn)$, $(Ax)(t) \in C^1([t_0,t_1),\Rm)$ and $x(t)$ satisfies \eqref{DAE} on $[t_0,t_1)$. If the function $x(t)$ additionally satisfies the initial condition \eqref{ini}, then it is called a \emph{solution of the initial value problem} (\emph{IVP}) or \emph{the Cauchy problem} \eqref{DAE}, \eqref{ini}.

A solution $x(t)$ (of an equation or inequality) is called \emph{global} if it exists on the entire interval $[t_0,\infty)$ (where $t_0$ is an initial value). 

A solution $x(t)$ is called \emph{Lagrange stable} if it is global and bounded, i.e., $x(t)$ exists on $[t_0,\infty)$ and $\sup\limits_{t\in [t_0,\infty)} \|x(t)\|<\infty$.

A solution $x(t)$ \emph{has a finite escape time} (or \emph{is blow-up in finite time}) and is called \emph{Lagrange unstable} if it exists on some finite interval $[t_0,\tau)$ and is unbounded, i.e., there exists $\tau>t_0$ ($\tau<\infty$) such that $\lim\limits_{t\to \tau-0} \|x(t)\|=\infty$.

 \subsection{Remarks on differential inequalities}\label{LaSDiffIneq}

Here we give brief information about the existence of positive solutions (of different types) for differential inequalities which will be used below.

Consider two differential inequalities:
\begin{equation}\label{L1v} 
\dfrac{dv}{dt}\le \chi(t,v),
\end{equation}
\begin{equation}\label{L2v} 
\dfrac{dv}{dt}\ge \chi(t,v),
\end{equation}
where $t\in [t_+,\infty)$ and $\chi\in C([t_+,\infty)\times (0,\infty),\R)$. 
A positive scalar function $v\in C^1([t_0,\infty),(0,\infty))$   satisfying the differential inequality (one of the mentioned inequalities) is called a \emph{positive solution} of this inequality on $[t_0,\infty)$ ($t_0\ge t_+$).

Let
 \begin{equation}\label{La-kU}
\chi(t,v)=k(t)\,U(v),
 \end{equation}
where $k\in C([t_+,\infty),\R)$ and $U\in C(0,\infty)$ (that is, $U\in C((0,\infty),\R)$ is a positive function), then the inequalities~\eqref{L1v} and \eqref{L2v} take the form
\begin{equation}\label{La-S.3}
\dfrac{dv}{dt}\le k(t)\,U(v),
\end{equation}
\begin{equation}\label{La-S.4}
\dfrac{dv}{dt}\ge k(t)\,U(v),
\end{equation}
respectively, and the following statements hold (see, e.g., \cite{LaSal-Lef}):
 \begin{itemize}
 \item  if $\int\limits_c^{\infty}U^{-1}(v)\, dv=\infty $  ($c>0$ is some constant), then the inequality~\eqref{La-S.3} does not have positive solutions with finite escape time;
 \item if $\int\limits_c^{\infty}U^{-1}(v)\, dv=\infty$ and $\int\limits_{t_0}^{\infty }k(t)dt<\infty $ ($t_0\ge t_+$ is some number), then the inequality~\eqref{La-S.3} does not have unbounded positive solutions for $t\in [t_+,\infty)$;
 \item  if $\int\limits_c^{\infty}U^{-1}(v)\, dv<\infty$ and $\int\limits_{t_0}^{\infty}k(t)dt=\infty$, then the inequality \eqref{La-S.4} does not have global (i.e., defined on $[t_+,\infty)$) positive solutions.
 \end{itemize}

 \subsection{Block form of a singular pencil, the corresponding direct decompositions of spaces and projectors}\label{BlockStruct}

Let $A$, $B$ be linear operators mapping $\Rn$ into $\Rm$ or $\Cn$ into $\Cm$; by $A$, $B$ we also denote $m\times n$-matrices corresponding to the operators $A$, $B$  (with respect to some bases in $\Rn$, $\Rm$ or $\Cn$, $\Cm$ respectively).  Consider the operator pencil $\lambda A+B$, where $\lambda$ is a complex parameter.

The \emph{rank of an operator pencil} $\lambda A+B$ is the dimension of its range.  The \emph{rank of a matrix pencil} $\lambda A+B$ is the largest among the orders of the pencil minors that do not vanish identically  \cite{GantmaherII}.  The rank of the matrix pencil equals the maximum number of its linearly independent columns (or rows), i.e.,  the maximum number of columns (or rows) of the pencil that are linearly independent set of vectors for some $\lambda=\lambda_0$. It is clear that the rank of the operator pencil and the rank of the corresponding matrix pencil coincide.

\begin{definition}\label{block-def.1}
A pencil of operators (or matrices) $\lambda A+B$ is called \emph{regular} if $n=m=\rank(\lambda A+B)$; otherwise, i.e.,  if  $n\ne m$ or $n=m$ and  $\rank(\lambda A+B)<n$, the pencil is called \emph{singular} or \emph{nonregular} (\emph{irregular}).
\end{definition}
For $m\times n$-matrices  $A$, $B$ corresponding to the operators $A$, $B$, this definition is equivalent to that given in \cite{GantmaherII}, namely, the pencil $\lambda A+B$ is called \emph{regular} if  $n=m$ and $\det(\lambda A+B)\not \equiv 0$, and \emph{singular} otherwise ($n\ne m$  or $n=m$ and $\det(\lambda A+B)\equiv 0$).

Note that Definition  \ref{block-def.1} is also equivalent to the following. An operator pencil $\lambda A+B\colon\Cn \to \Cm$ is called \emph{regular} if the set of its regular points  $\rho(A,B)= \{\lambda \in \mC \mid  (\lambda A+B)^{-1} \in \mathrm{L}(\Cm,\Cn)\}$ is not empty, and \emph{singular} if $\rho(A,B) = \emptyset$. 
For the real operators $A,\, B\colon\Rn\to \Rm$ this definition takes the following form. First, we introduce the complex extensions $\hat{A},\, \hat{B}$  of operators $A$, $B$, which map $\Cn$ into $\Cm$.
Recall that the ranks of the pencil $\lambda A+B$ and its complex extension $\lambda \hat{A}+\hat{B}$ coincide, and that the matrices of the operators $A$, $B$ with respect to some bases in $\Rn$, $\Rm$ coincide with the matrices of their complex extensions $\hat{A}$, $\hat{B}$ with respect to the same bases in $\Cn$, $\Cm$.
A pencil $\lambda A+B$ of the operators $A,\, B\colon\Rn\to \Rm$  is called  \emph{regular} if the set of regular points $\rho(\hat{A},\hat{B})$ of its complex extension $\lambda\hat{A}+\hat{B}\in \mathrm{L}(\Cn,\Cm)$ is not empty, and \emph{singular} if $\rho(\hat{A},\hat{B})=\emptyset$.
The regular points $\lambda$ of the complex extension $\lambda\hat{A}+\hat{B}$ are called \emph{regular points} of the pencil $\lambda A+B$ (since for these points there exists the resolvent $(\lambda A+B)^{-1}$).

 \smallskip
The results from \cite{Fil.sing}, \cite{Fil.KNU2019} which will be used hereinafter are given below. The detailed description and proof of these results  can be found in \cite{Fil.KNU2019}.

In what follows, we will consider linear operators $A,\, B\colon \Rn\to\Rm$.

Note that instead of the real operators we can consider the complex operators  $A,\, B\colon \Cn \to \Cm$, for which Proposition~\ref{STABssr} (see below) remains true, but when constructing direct decompositions of the form $\eqref{ssr}$ for the complex spaces $\Cn$ and $\Cm$ and the corresponding projectors, it is necessary to replace transposition by Hermitian conjugation everywhere. 

Let $A\colon X\to Y$ be a linear operator and $X_0$, $Y_0$ be some subspaces in $X$, $Y$ respectively. The pair of subspaces $\{X_0,Y_0\}$ is said to be \emph{invariant} under the operator $A$ if $A\colon X_0\to Y_0$, i.e., $A X_0 \subseteqq Y_0$ (cf. \cite{rut-sing}; in the case when $X=Y$ and $X_0=Y_0$, this is the classical definition of invariance \cite{Halmos}).

Recall the following classic definition:  A linear space $L$ is decomposed into the \emph{direct sum} $L =L_1\dot + L_2$ of the subspaces $L_1\subseteq L$ and $L_2\subseteq L$ if $L_1\cap L_2 =\{0\}$ and $L_1+L_2=\{x_1+x_2 \mid x_1\in L_1, x_2\in L_2\}=L$, or, equivalently, if every $x\in L$ can be uniquely represented in the form $x=x_1+x_2$ where $x_i\in L_i$, $i=1,2$. The representation $L=L_1\dot + L_2$ is also called a \emph{direct decomposition} of the space $L$.

Since the direct (Cartesian) product $L_1\times L_2$ is the direct sum of the spaces $L_1\times \{0\}$ and $\{0\}\times L_2$, where $0$ from $L_2$ and $L_1$ respectively, then it can be identified with the direct sum $L_1\dot + L_2$ by identifying $L_1\times \{0\}$ with $L_1$ and $\{0\}\times L_2$ with $L_2$.

\emph{Thus, below, when indicating the block structures of operators, we identify direct sums and the corresponding direct products of subspaces for convenience of notation.}

 \begin{proposition}[see \cite{Fil.sing,Fil.KNU2019}]\label{STABssr}
For the operators $A,\, B\colon\Rn\to\Rm$, which form a singular pencil $\lambda A+B$, there exist the decompositions of the spaces $\Rn$, $\Rm$ into the direct sums of subspaces (which can always be construct)
 \begin{equation}\label{ssr}
\Rn=X_s\dot +X_r=X_{s_1}\dot+X_{s_2}\dot+X_r,\quad  \Rm=Y_s\dot+Y_r=Y_{s_1}\dot+Y_{s_2}\dot+Y_r,
 \end{equation} 
with respect to which  $A$, $B$ have the block structure  
 \begin{equation}\label{srAB}
A=\begin{pmatrix} 
   A_s & 0   \\
   0   & A_r \end{pmatrix},\;
B=\begin{pmatrix} 
   B_s & 0   \\
   0   & B_r \end{pmatrix}\colon X_s\dot +X_r\to Y_s\dot +Y_r\quad  (X_s\times X_r\to Y_s\times Y_r),
 \end{equation}
where $A_s=A\big|_{X_s},\, B_s=B\big|_{X_s}\colon X_s\to Y_s$ and $A_r=A\big|_{X_r},\, B_r=B\big|_{X_r}\colon X_r\to Y_r$, 
i.e., the pair of singular subspaces $\{X_s,Y_s\}$ and the pair of regular subspaces  $\{X_r,Y_r\}$  are invariant under the operators  $A$, $B$\,  ($A,\, B\colon X_s\to Y_s$ and $A,\, B\colon X_r\to Y_r$), and their singular blocks $A_s$, $B_s$ have the structure
 \begin{equation}\label{sab}
A_s = \begin{pmatrix} A_{gen} & 0 \\ 0 & 0
     \end{pmatrix},\;
B_s = \begin{pmatrix} B_{gen} & B_{und} \\ B_{ov} & 0
     \end{pmatrix}\colon X_{s_1}\dot + X_{s_2} \to Y_{s_1}\dot + Y_{s_2}\quad (X_{s_1}\times X_{s_2} \to Y_{s_1}\times Y_{s_2}),
 \end{equation}
where the operator ${A_{gen}\colon X_{s_1}\to Y_{s_1}}$ has the inverse ${A_{gen}^{-1}\in \mathrm{L}(Y_{s_1},X_{s_1})}$ (if ${X_{s_1}\ne \{0\}}$), ${B_{gen}\colon X_{s_1}\to Y_{s_1}}$, ${B_{und}\colon X_{s_2}\to Y_{s_1}}$, ${B_{ov}\colon X_{s_1}\to Y_{s_2}}$, at that, if ${\rank(\lambda A+B)=m<n}$, then the structure of the singular blocks takes the form 
  \begin{equation}\label{sab1}
A_s = \begin{pmatrix} A_{gen} & 0 \end{pmatrix},\;
B_s = \begin{pmatrix} B_{gen} & B_{und} \end{pmatrix}\colon X_{s_1}\dot +  X_{s_2}\to Y_s\quad (X_{s_1}\times  X_{s_2}\to Y_s)
 \end{equation}
and $Y_{s_1}=Y_s$, $Y_{s_2}= \{0\}$ in the decompositions \eqref{ssr}, and if ${\rank(\lambda A+B)=n<m}$, then the structure of singular blocks takes the form 
 \begin{equation}\label{sab2}
A_s = \begin{pmatrix} A_{gen} \\ 0
  \end{pmatrix},\;
B_s = \begin{pmatrix} B_{gen} \\ B_{ov}
 \end{pmatrix}\colon X_s\to Y_{s_1}\dot+ Y_{s_2}\quad (X_s\to Y_{s_1}\times Y_{s_2})
\end{equation}
and $X_{s_1}=X_s$, $X_{s_2}=\{0\}$ in the decompositions  \eqref{ssr}. 
The direct decompositions of spaces \eqref{ssr} generate the pair $S$, $P$, the pair $F$, $Q$, the pair $S_1$, $S_2$ and the pair $F_1$, $F_2$  of the mutually complementary projectors (${S+P=I_{\Rn}}$, ${S^2=S}$, $P^2=P$, $SP=PS=0$;\,  $F+Q=I_{\Rm}$, $F^2=F$, $Q^2=Q$, $FQ=QF=0$;\,  $S_1+S_2=S$, $S_iS_j=\delta_{ij}S_i$;\,  $F_1+F_2=F$, $F_iF_j=\delta_{ij}F_i$)
\begin{equation}\label{ProjRS}
S \colon \Rn \to X_s,\; P \colon \Rn\to X_r,\qquad F \colon \Rm \to Y_s,\; Q \colon \Rm \to Y_r,
\end{equation}
\begin{equation}\label{ProjS}
S_i\colon \Rn \to X_{s_i},\qquad F_i\colon \Rm \to Y_{s_i},\quad i=1, 2,
\end{equation}
where $F_1=F$, $F_2=0$ if $\rank(\lambda A+B)= m<n$, and $S_1=S$, $S_2=0$ if $\rank(\lambda A+B)= n<m$, which have the properties
 \begin{equation}\label{ProjRSInvar}
FA=AS,\quad FB=BS,\qquad  QA =AP,\quad QB=BP,
 \end{equation}
 \begin{equation}\label{properProjS}
{A S_2 = 0},\quad {F_2 A = 0},\quad {F_2 B S_2=0}.
 \end{equation}
 \end{proposition}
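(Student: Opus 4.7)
The plan is to derive the decomposition by invoking the Kronecker canonical form (KCF) for singular matrix pencils from \cite{GantmaherII}. Passing to the complex extension $\lambda\hat A+\hat B\colon\Cn\to\Cm$, the KCF asserts the existence of invertible matrices bringing the pencil, by strict equivalence, to block-diagonal form consisting of three kinds of blocks: blocks $L_\varepsilon$ of size $\varepsilon\times(\varepsilon+1)$ (corresponding to minimal column indices), blocks $L^{\T}_\eta$ of size $(\eta+1)\times\eta$ (corresponding to minimal row indices), and a regular block in Weierstrass form. By pairing complex-conjugate sub-blocks of the regular part, the change-of-basis matrices can be chosen with real entries, which produces direct decompositions of $\Rn$ and $\Rm$ into real subspaces.

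From the KCF I would define $X_r$ and $Y_r$ as the subspaces corresponding to the regular block and $X_s,Y_s$ as those corresponding to the union of the $L$- and $L^{\T}$-blocks; block-diagonality of the normal form is equivalent to the invariance $A,B\colon X_s\to Y_s$ and $A,B\colon X_r\to Y_r$, giving \eqref{srAB}. For the refinement of the singular part, note that in each $L_\varepsilon$-block the $A$-part equals $(I_\varepsilon\;\;0)$, so its last column spans a direction annihilated by $A$ and contributes to $X_{s_2}$, while the first $\varepsilon$ columns contribute to $X_{s_1}$ and the whole block-image lies in $Y_{s_1}$. Symmetrically, in each $L^{\T}_\eta$-block the $A$-part is the column $I_\eta$ stacked over a zero row, so that last row contributes to $Y_{s_2}$ while the first $\eta$ rows contribute to $Y_{s_1}$ and all $\eta$ columns to $X_{s_1}$. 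Assembling these contributions across all singular blocks, $A_{gen}$ becomes block-diagonal with identity blocks (hence invertible), while the shift part of the $L$- and $L^{\T}$-blocks reproduces exactly the off-diagonal entries $B_{und}$ and $B_{ov}$ in \eqref{sab}. The special cases \eqref{sab1} and \eqref{sab2} follow from the KCF dimension count: $\rank(\lambda A+B)=m$ forbids $L^{\T}$-blocks (so $Y_{s_2}=\{0\}$), and $\rank(\lambda A+B)=n$ forbids $L$-blocks (so $X_{s_2}=\{0\}$). The projectors $S,P,F,Q,S_i,F_i$ are then defined as projections along the complementary subspaces in the direct sums \eqref{ssr}, and the identities \eqref{ProjRSInvar}, \eqref{properProjS} follow block-by-block from \eqref{srAB} and \eqref{sab}.

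The main obstacle I anticipate is the real descent: the KCF is classically stated over an algebraically closed field, so realifying the regular Weierstrass blocks requires combining complex-conjugate Jordan blocks into real blocks while leaving the $L$- and $L^{\T}$-blocks, which are already real by construction, untouched. A secondary bookkeeping task is to arrange the refined decompositions $X_s=X_{s_1}\dot+X_{s_2}$ and $Y_s=Y_{s_1}\dot+Y_{s_2}$ so that all the projectors $S,P,F,Q,S_1,S_2,F_1,F_2$ simultaneously satisfy the full list \eqref{ProjRSInvar}, \eqref{properProjS}; this reduces to checking that each chosen complementary subspace intertwines correctly with $A$ and $B$, which is routine but must be verified one singular block at a time.
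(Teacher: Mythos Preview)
The paper does not actually contain a proof of this proposition; it is quoted from \cite{Fil.sing,Fil.KNU2019}, with the explicit remark that ``the detailed description and proof of these results can be found in \cite{Fil.KNU2019}'' and that ``the results from \cite{GantmaherII} were used when constructing this block form.'' Your approach via the Kronecker canonical form is therefore exactly the route the cited construction is said to take, and the outline you give is correct: the $L_\varepsilon$-blocks contribute the columns of $X_{s_2}$ and the operator $B_{und}$, the $L^{\T}_\eta$-blocks contribute the rows of $Y_{s_2}$ and the operator $B_{ov}$, and the block-diagonality of the KCF forces the $(2,2)$ entry $F_2BS_2$ to vanish because $X_{s_2}$ and $Y_{s_2}$ arise from disjoint families of canonical blocks that do not interact. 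Your dimension-count argument for the special cases \eqref{sab1}, \eqref{sab2} is also correct: full row rank excludes $L^{\T}$-blocks and full column rank excludes $L$-blocks.

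The real-descent issue you flag is genuine but, as you say, routine; the paper itself notes (just after stating the proposition) that the complex case differs from the real one only in replacing transposition by Hermitian conjugation, and the $L$- and $L^{\T}$-blocks are already real, so only the regular Weierstrass part needs the usual pairing of conjugate eigenvalues.
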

The converse assertion (regarding the one given in Proposition \ref{STABssr}) that there exist the pairs of mutually complementary projectors \eqref{ProjRS}, \eqref{ProjS} (with the properties \eqref{ProjRSInvar}, \eqref{properProjS})  which generate the direct decompositions of spaces  \eqref{ssr}  is also true.

The method for constructing the subspaces from the decompositions \eqref{ssr} and the corresponding projectors \eqref{ProjRS}, \eqref{ProjS} is described in \cite{Fil.KNU2019,Fil.UMJ}.

 \smallskip
It follows from Proposition \ref{STABssr} that, with respect to the decompositions \eqref{ssr}, the singular pencil $\lambda A+B$ of the operators $A,\, B\colon \Rn \to \Rm$ takes the block form
 \begin{equation}\label{penc}
\lambda A+B =  \begin{pmatrix}
 \lambda A_s+B_s & 0 \\
 0 &  \lambda A_r+B_r \end{pmatrix},\quad
  A_s,\, B_s\colon X_s\to Y_s,\quad A_r,\, B_r\colon X_r\to Y_r,
 \end{equation}
where the regular block $\lambda A_r+ B_r$ is a regular pencil and the singular block $\lambda A_s+ B_s$ is a purely singular pencil, i.e., it is impossible to separate out a regular block in this pencil.

In \cite{Fil.KNU2019}, extensions of the operators from the block representations \eqref{srAB}, \eqref{sab}, \eqref{sab1}, \eqref{sab2} to $\Rn$ and corresponding semi-inverse operators were introduced.  These operators are described below and are used in Section~\ref{DAEsingReduce} and in subsequent sections.

Extensions of the operators $A_s$, $A_r$, $B_s$, $B_r$ from \eqref{srAB} to $\Rn$ are introduced as follows:
\begin{equation}\label{AsrBsrExtend}
 \EuScript A_s = F A,\quad \EuScript A_r= Q A,\quad \EuScript B_s= F B,\quad \EuScript B_r= Q B,
\end{equation}
$\EuScript A_s, \EuScript B_s, \EuScript A_r, \EuScript B_r\in \mathrm{L}(\Rn,\Rm)$. Then
\begin{equation}\label{AsrBsr}
\EuScript A_s\big|_{X_s}=A_s,\quad \EuScript A_r\big|_{X_r}=A_r,\quad \EuScript B_s\big|_{X_s}=B_s,\quad \EuScript B_r\big|_{X_r}=B_r,
\end{equation}
and the operators \eqref{AsrBsrExtend} act so that ${\EuScript A_s,\, \EuScript B_s\colon \Rn\to Y_s}$,  ${\EuScript A_r,\, \EuScript B_r\colon \Rn\to Y_r}$ \,($\EuScript A_s, \EuScript B_s\colon {X_s\to Y_s}$,  $\EuScript A_r, \EuScript B_r\colon X_r\to Y_r$) and $X_r\subset \Ker(\EuScript A_s)$, $X_r\subset \Ker(\EuScript B_s)$, $X_s\subset \Ker(\EuScript A_r)$, ${X_s\subset \Ker(\EuScript B_r)}$.

 \smallskip
In the general case, when $\rank(\lambda A+B) < n$ and $\rank(\lambda A+B) < m$, the spaces $\Rn$, $\Rm$ have the direct decompositions \eqref{ssr} and, accordingly, the singular subspaces are decomposed into the direct sums of subspaces
\begin{equation*}
X_s = X_{s_1}\dot +  X_{s_2},\quad Y_s = Y_{s_1}\dot +  Y_{s_2}
\end{equation*}
with respect to which the singular blocks $A_s$, $B_s$ have the structure \eqref{sab}, and extensions of the operators (blocks) from \eqref{sab} to $\Rn$ are introduced as follows:
 \begin{equation}\label{ABssExtend}
\EuScript A_{gen}= F_1 A,\quad \EuScript B_{gen}=F_1 B S_1,\quad
\EuScript B_{und}=F_1 B S_2,\quad \EuScript B_{ov}=F_2 B S_1,
 \end{equation} 
$\EuScript A_{gen},\, \EuScript B_{gen},\, \EuScript B_{und},\, \EuScript B_{ov}\in \mathrm{L}(\Rn,\Rm)$ \,(notice, that $F_1 A=AS_1=FA$).  Then
\begin{equation}\label{ABss}
 \EuScript A_{gen}\big|_{X_{s_1}}=A_{gen},\;\;
 \EuScript B_{gen}\big|_{X_{s_1}}=B_{gen},\;\;
 \EuScript B_{und}\big|_{X_{s_2}}=B_{und},\;\;
 \EuScript B_{ov}\big|_{X_{s_1}}=B_{ov},
\end{equation}
and the operators \eqref{ABssExtend} act so that ${\EuScript A_{gen}\Rn = \EuScript A_{gen}X_{s_1}= Y_{s_1}}$ \,($X_{s_2}\dot +X_r= \Ker(\EuScript A_{gen})$),\, ${\EuScript B_{gen}\colon \Rn\to Y_{s_1}}$, $X_{s_2}\dot +X_r\subset \Ker(\EuScript B_{gen})$,\, ${\EuScript B_{und}\colon \Rn\to Y_{s_1}}$, $X_{s_1}\dot + X_r\subset \Ker(\EuScript B_{und})$,\, and ${\EuScript B_{ov}\colon \Rn\to Y_{s_2}}$, $X_{s_2}\dot +X_r\subset \Ker(\EuScript B_{ov})$.

 \smallskip 
In the case when $\rank(\lambda A+B) = m < n$, the singular subspace $X_s$ is  decomposed into the direct sum of subspaces
\begin{equation*}
X_s = X_{s_1}\dot +  X_{s_2}
\end{equation*}
with respect to which the singular blocks $A_s$, $B_s$ have the structure \eqref{sab1}, and extensions of the operators (blocks) from \eqref{sab1} to $\Rn$ are introduced as follows:
\begin{equation}\label{ABsab1Extend} 
\EuScript A_{gen}=AS_1,\quad \EuScript B_{gen}=B S_1,\quad \EuScript B_{und}=B S_2,
\end{equation} 
$\EuScript A_{gen},\, \EuScript B_{gen},\, \EuScript B_{und}\in \mathrm{L}(\Rn,\Rm)$. Then
\begin{equation}\label{ABsab1}
\EuScript A_{gen}\big|_{X_{s_1}}=A_{gen},\quad \EuScript B_{gen}\big|_{X_{s_1}}=B_{gen},\quad \EuScript B_{und}\big|_{X_{s_2}}=B_{und},
\end{equation}
and the operators \eqref{ABsab1Extend} act so that ${\EuScript A_{gen}\Rn =\EuScript A_{gen}X_{s_1}=Y_s}$ \,(${X_{s_2}\dot + X_r= \Ker(\EuScript A_{gen})}$),\, $\EuScript B_{gen}\colon \Rn\to Y_s$, ${X_{s_2}\dot + X_r\subset \Ker(\EuScript B_{gen})}$, and ${\EuScript B_{und}\colon \Rn\to Y_s}$, ${X_{s_1}\dot +X_r\subset \Ker(\EuScript B_{und})}$.

  \smallskip
In the case when $\rank(\lambda A+B)=n<m$, the singular subspace $Y_s$ is  decomposed into the direct sum of subspaces
\begin{equation*}
Y_s= Y_{s_1}\dot +  Y_{s_2}
\end{equation*}
with respect to which the singular blocks $A_s$, $B_s$ have the structure \eqref{sab2}, and extensions of the operators (blocks) from \eqref{sab2} to $\Rn$ are introduced as follows:
\begin{equation}\label{ABsab2Extend}
\EuScript A_{gen}=F_1 A,\quad \EuScript B_{gen}=F_1 B,\quad \EuScript B_{ov}=F_2 B,
\end{equation}
$\EuScript A_{gen},\, \EuScript B_{gen},\, \EuScript B_{ov}\in \mathrm{L}(\Rn,\Rm)$. Then
\begin{equation}\label{ABsab2}
\EuScript A_{gen}\big|_{X_s}=A_{gen},\quad \EuScript B_{gen}\big|_{X_s}=B_{gen},\quad \EuScript B_{ov}\big|_{X_s}=B_{ov},
\end{equation}
and the operators \eqref{ABsab2Extend} act so that $\EuScript A_{gen}\Rn =\EuScript A_{gen}X_s= Y_{s_1}$ \,($X_r= \Ker(\EuScript A_{gen})$), $\EuScript B_{gen}\colon \Rn\to Y_{s_1}$, $X_r\subset \Ker(\EuScript B_{gen})$, and $\EuScript B_{ov}\colon \Rn\to Y_{s_2}$, $X_r\subset \Ker(\EuScript B_{ov})$.

\begin{remark}\label{Rem_InvAgen}
Note that the extension $\EuScript A_{gen}^{(-1)}\in \mathrm{L}(\Rm,\Rn)$ of the operator $A_{gen}^{-1}$ to $\Rm$ that satisfies the properties
\begin{equation}\label{InvAgen}
\EuScript A_{gen}^{(-1)} \EuScript A_{gen}=S_1,\quad  \EuScript A_{gen}\, \EuScript A_{gen}^{(-1)}=F_1,\quad \EuScript A_{gen}^{(-1)}=S_1 \EuScript A_{gen}^{(-1)},
\end{equation}
where $F_1 = F$ if $\rank(\lambda A+B)= m<n$ and $S_1=S$ if $\rank(\lambda A+B)= n<m$, is a semi-inverse operator of $\EuScript A_{gen}$, i.e., $\EuScript A_{gen}^{(-1)}\Rm =\EuScript A_{gen}^{(-1)}Y_{s_1}=X_{s_1}$ \,($Y_{s_2}\dot +Y_r= \Ker(\EuScript A_{gen}^{(-1)})$) and $A_{gen}^{-1}= \EuScript A_{gen}^{(-1)}\big|_{Y_{s_1}}$ \cite{Fil.KNU2019}  (the definition of a semi-inverse operator can be found in \cite{Faddeev}).
\end{remark}

If $X_r=\{0\}$, $Y_r=\{0\}$, then the regular block $\lambda A_r+ B_r$ is absent and $\lambda A+B=\lambda A_s+ B_s$ is a purely singular pencil.

 \smallskip
Consider a regular pencil $\lambda A_r+ B_r$ of operators $A_r,\, B_r\colon X_r\to Y_r$ acting in finite-dimensional spaces ($\dim X_r =\dim Y_r$).
We assume that either $\lambda=\infty$ is a removable singular point of the resolvent  $(\lambda A_r+B_r)^{-1}$, or the operator $A_r$ is invertible. These conditions are equivalent to the following:  the point $\mu=0$ is either a pole of order 1 of the resolvent $(A_r+ \mu B_r)^{-1}$ or a regular point of the pencil $A_r+ \mu B_r$.
Thus, we assume the following: there exist constants $C_1,\, C_2 >0$ such that
 \begin{equation}\label{index1}
\left\|(\lambda A_r+B_r)^{-1}\right\|\le C_1,\quad   |\lambda|\ge C_2.
 \end{equation}

If $A_r$ is noninvertible and \eqref{index1} holds ($\mu=0$ is a simple pole of the resolvent $(A_r+ \mu B_r)^{-1}$), then $\lambda A_r+ B_r$ is a regular pencil of \emph{index 1}. Note that if $A_r=0$ and there exists $B_r^{-1}$, then $\lambda A_r+ B_r\equiv B_r$ can be considered as a regular pencil of index 1.
If $A_r$ is invertible ($\mu=0$ is a regular point of $A_r+\mu B_r$), then $\lambda A_r+B_r$ is a regular pencil of \emph{index~0}. Thus, if $\lambda A_r+B_r$ is a regular pencil and \eqref{index1} holds, then $\lambda A_r+B_r$ is a regular pencil of \emph{index not higher than 1} (index~0 or~1).

 \begin{remark}
If the regular block $\lambda A_r+ B_r$ from \eqref{penc} is a regular pencil of index not higher than 1 (i.e., satisfies \eqref{index1}), then there exist the pair $\Tilde P_j\colon X_r \to X_j$, $j=1,2$, and the pair $\Tilde Q_j\colon Y_r \to Y_j$, $j=1,2$, of mutually complementary projectors  which generate the direct decompositions
\begin{equation}\label{rr}
  X_r = X_1 \dot +X_2,\quad Y_r = Y_1 \dot +Y_2
\end{equation}
such that $A_r,\, B_r \colon  X_j \to Y_j$, $j=1,2$ (the pairs of subspaces $X_1$, $Y_1$ and $X_2$, $Y_2$ are invariant under $A_r$, $B_r$), i.e.,
\begin{equation}\label{ProjRInv}
\Tilde Q_j A_r=A_r \Tilde P_j,\quad \Tilde Q_j B_r=B_r \Tilde P_j,
\end{equation}
and the restricted operators $A_j =A_r\big|_{X_j}\colon  X_j \to Y_j$,\, $B_j=B_r\big|_{X_j}\colon X_j \to Y_j$, $j=1,2$, are such that  ${A_2=0}$ ($\Tilde Q_2 A_r=0$) and there exist $A_1^{-1} \in \mathrm{L}(Y_1,X_1)$\, (if $X_1\not=\{0\}$) and $B_2^{-1} \in \mathrm{L}(Y_2,X_2)$\, (if $X_2\not=\{0\}$). For a regular pencil of operators, the pairs of projectors with the specified properties were introduced in \cite{Rut}. 

With respect to the direct decompositions \eqref{rr} the operators $A_r$, $B_r$ have the block structure
\begin{equation}\label{rab}
  A_r = \begin{pmatrix}  A_1 & 0 \\ 0 & 0
  \end{pmatrix},\;
  B_r = \begin{pmatrix}  B_1 & 0 \\ 0 & B_2
  \end{pmatrix}\colon X_1\dot +X_2 \to Y_1\dot +Y_2\quad (X_1\times X_2 \to Y_1\times Y_2),
\end{equation}
where $A_1$ and $B_2$ are invertible (if $X_1\not=\{0\}$ and $X_2\not=\{0\}$ respectively).
 \end{remark}

Thus, if the regular block $\lambda A_r+ B_r$ is a regular pencil of index not higher than 1, then there exist the direct decompositions of  the regular spaces \eqref{rr} with respect to which $A_r$, $B_r$ have the block structure \eqref{rab}.

Projectors $\Tilde P_j$ and $\Tilde Q_j$ can be calculated by using contour integration \cite[p.~2005]{Rut} or defined by the formulas \cite{Fil.KNU2019}:
 \begin{equation}\label{ProjRes}
\begin{aligned}
&\Tilde P_1 = \mathop{Res }\limits_{\mu =0}\left(\frac{(A_r+ \mu B_r)^{-1} A_r}{\mu} \right),\quad &&  \Tilde Q_1 =\mathop{Res }\limits_{\mu =0}\left(\frac{A_r(A_r+ \mu B_r)^{-1}}{\mu} \right), \\
&\Tilde P_2 =E_{X_r}-\Tilde P_1,\quad && \Tilde Q_2 =E_{Y_r} - \Tilde Q_1.
\end{aligned}
 \end{equation}
In addition, for a regular pencil of index 1 
one can obtain projectors onto the subspaces from the decompositions \eqref{rr} without using the formulas from \cite{Rut} or formulas \eqref{ProjRes} as described in \cite[Remark~3, p.~44--45]{Fil.KNU2019}.

Introduce the extensions $P_j$, $Q_j$ of the projectors $\Tilde P_j$, $\Tilde Q_j$ to $\Rn$, $\Rm$, respectively, so that $X_j=P_j \Rn$, $Y_j=Q_j \Rm$, $j=1,2$ (where $X_j$, $Y_j$ from \eqref{rr}) \cite{Fil.KNU2019}. The extended projectors
\begin{equation}\label{ProjR}
P_j\colon \Rn \to X_j,\quad Q_j\colon \Rm \to Y_j,\quad j=1,2,
\end{equation}
have the properties of the original ones, i.e., $P_i P_j =\delta _{ij} P_i$, $P_1+P_2=P$, $Q_i Q_j =\delta _{ij} Q_i$, $Q_1+Q_2=Q$ (the pairs $P_1$, $P_2$ and $Q_1$, $Q_2$ are two pairs of mutually complementary projectors) and
\begin{equation*}
Q_j A=A P_j,\quad Q_j B=B P_j,\quad Q_2 A=0\quad (Q_1 A=QA).
\end{equation*}
The properties of the operator $A_j=A\big|_{X_j}\colon X_j\to Y_j$ and $B_j = B\big|_{X_j}\colon X_j\to Y_j$, $j=1,2$, are also retained. 
Introduce their extensions to $\Rn$ as follows:
\begin{equation}\label{ABrrExtend}
 \EuScript A_j=Q_j A,\quad \EuScript B_j=Q_j B,\quad j=1,2.
\end{equation}
Then
 \begin{equation}\label{ABrr}
\EuScript A_j\big|_{X_j}=A_j,\quad \EuScript B_j\big|_{X_j}=B_j,\quad  j=1,2,
 \end{equation}
and the operators $\EuScript A_j,\, \EuScript B_j\in \mathrm{L}(\Rn,\Rm)$ \eqref{ABrrExtend}  act so that $\EuScript A_1\Rn =\EuScript A_1 X_1= Y_1$ \,($X_2\dot +X_s= \Ker(\EuScript A_1)$), $\EuScript A_2=0$,  $\EuScript B_1\colon \Rn\to Y_1$, ${X_2\dot +X_s\subset \Ker(\EuScript B_1)}$, and  $\EuScript B_2\Rn =\EuScript B_2 X_2= Y_2$ \,(${X_1\dot +X_s= \Ker(\EuScript B_2)}$).

\begin{remark}[{\cite{Fil.KNU2019}}]
The extension $\EuScript A_1^{(-1)}\in \mathrm{L}(\Rm,\Rn)$ of the operator $A_1^{-1}$ to $\Rm$ that satisfies the properties
\begin{equation}\label{InvA1}
\EuScript A_1^{(-1)} \EuScript A_1=P_1,\quad \EuScript A_1 \EuScript A_1^{(-1)}=Q_1,\quad \EuScript A_1^{(-1)}=P_1 \EuScript A_1^{(-1)},
\end{equation}
is a semi-inverse operator of $\EuScript A_1$, i.e., $\EuScript A_1^{(-1)}\Rm =\EuScript A_1^{(-1)} Y_1 =X_1$ \,($Y_2\dot + Y_s= \Ker(\EuScript A_1^{(-1)})$) and $A_1^{-1}=\EuScript A_1^{(-1)}\big|_{Y_1}$. Obviously, $P_1 \EuScript A_1^{(-1)}=\EuScript A_1^{(-1)}Q_1$. These properties enable one to find the form of $\EuScript A_1^{(-1)}$ (or $A_1^{-1}$), using the form of the projectors.

The semi-inverse operator $\EuScript B_2^{(-1)}\in \mathrm{L}(\Rm,\Rn)$ of $\EuScript B_2$, i.e., $\EuScript B_2^{(-1)}\Rm =\EuScript B_2^{(-1)}Y_2=X_2$ \,($Y_1\dot + Y_s= \Ker(\EuScript B_2^{(-1)})$) and $B_2^{-1}=\EuScript B_2^{(-1)}\big|_{Y_2}$,  can be calculated in a similar way:
\begin{equation*}
\EuScript B_2^{(-1)}\EuScript B_2=P_2,\quad \EuScript B_2\EuScript B_2^{(-1)}=Q_2,\quad \EuScript B_2^{(-1)}=P_2 \EuScript B_2^{(-1)}\quad (P_2 \EuScript B_2^{(-1)}=\EuScript B_2^{(-1)}Q_2).
\end{equation*}
  \end{remark}

The decompositions \eqref{ssr} and \eqref{rr} together give the decomposition of $\R^n$ into the direct sum of subspaces
 \begin{equation}\label{Xssrr}
\Rn=X_s\dot +X_r =X_{s_1}\dot +  X_{s_2}\dot +  X_1 \dot +X_2
 \end{equation} 
with respect to which any element $x\in \Rn$ can be uniquely represented as
 \begin{equation}\label{xsr}
x= x_s+x_r= x_{s_1}+x_{s_2}+x_{p_1}+x_{p_2}\qquad (x_s=x_{s_1}+x_{s_2},\quad x_r=x_{p_1}+x_{p_2}),
 \end{equation}
where $x_s=Sx\in X_s$, \,$x_r=Px\in X_r$, \,$x_{s_i}=S_i x\in X_{s_i}$, \,$x_{p_i}=P_i x\in X_i$, $i=1,2$.

In what follows, it is assumed that the specified correspondence between the subscript of an element from the subspace present in the decomposition \eqref{Xssrr}  (or a component from the representation \eqref{xsr})   and the subspace to which this element belongs is always fulfilled, i.e., the element $x_{s_i}$ ($i=1,2$) belongs to $X_{s_i}$ because it has the subscript ${s_i}$ ($i=1,2$), the element $x_{p_j}$ belongs to $X_j$ ($j=1,2$), and so on. Thus, we will not always explicitly indicate belonging to one of the subspaces introduced in \eqref{Xssrr}, when the element has one of the subscripts given in \eqref{xsr}, can be any element from the corresponding subspace, and it is clear from the context what exactly is meant.

Similarly, the decompositions \eqref{ssr} and \eqref{rr} together also give the decomposition of $\Rm$ into the direct sum of subspaces
\begin{equation}\label{Yssrr}
 \Rm=Y_s\dot +Y_r =Y_{s_1}\dot +  Y_{s_2}\dot + Y_1 \dot +Y_2,
\end{equation}
with respect to which any element $y\in \Rm$ can be uniquely represented as
 \begin{equation}\label{ysr}
y= y_s+y_r= y_{s_1}+y_{s_2}+y_{p_1}+y_{p_2}\qquad (y_s=y_{s_1}+y_{s_2},\quad y_r=y_{p_1}+y_{p_2}),
 \end{equation}
where $y_s=Fy\in Y_s$, $y_r=Qy\in Y_r$, $y_{s_i}=F_i\, y\in Y_{s_i}$ and $y_{p_i}=Q_i\, y\in Y_i$, $i=1,2$.

\subsection{Reduction of a DAE with the singular characteristic pencil to a system of ordinary differential and algebraic equations}\label{DAEsingReduce}

In Section \ref{BlockStruct}, the block form of a singular pencil of operators, which consists of the singular and regular blocks where zero and invertible blocks are separated out, was described.  Recall that the direct decompositions of spaces, which reduce the pencil to this block form, generate the projectors onto the subspaces from these decompositions, and the converse is also true.
The information given in Section~\ref{BlockStruct} is used below.

Consider the DAE \eqref{DAE} with the singular characteristic pencil $\lambda A+B$ that has the regular block $\lambda A_r+B_r$ (see \eqref{penc}) of index not higher than~1.

Applying the projectors $F_1$, $Q_1$, $Q_2$, $F_2$  from \eqref{ProjS}, \eqref{ProjR} to the equation \eqref{DAE} and using their properties, we obtain the equivalent system
 \begin{align}
\frac{d}{dt} (F_1 A S_1 x)+F_1 B S x &=F_1 f(t,x), \label{DAEsysProj1} \\
\frac{d}{dt} (Q_1 A P_1 x)+Q_1 BP_1 x &=Q_1 f(t,x), \label{DAEsysProj2} \\
Q_2 B P_2 x &= Q_2 f(t,x), \label{DAEsysProj3} \\
F_2 B S_1 x &= F_2 f(t,x). \label{DAEsysProj4}
 \end{align}

Using the representation \eqref{xsr}, i.e., ${x=x_{s_1}+x_{s_2}+x_{p_1}+x_{p_2}}$ where ${x_{s_i}=S_i x\in X_{s_i}}$ and ${x_{p_i}=P_i x\in X_i}$ ($i=1,2$), and the operators \eqref{ABssExtend} and \eqref{ABrrExtend}, we obtain the system equivalent~to~\eqref{DAEsysProj1}--\eqref{DAEsysProj4}:
 \begin{equation}\label{DAEsysExtOp}
\begin{split}
\frac{d}{dt} (\Es A_{gen} x_{s_1})+ \Es B_{gen} x_{s_1} + \Es B_{und} x_{s_2} = F_1 f(t,x), \\
\frac{d}{dt} (\Es A_1 x_{p_1})+ \Es B_1 x_{p_1} = Q_1 f(t,x),\\
\Es B_2 x_{p_2} =Q_2 f(t,x), \\
\Es B_{ov}x_{s_1}=F_2 f(t,x).
\end{split}
 \end{equation}
Multiplying the first three equations of the system \eqref{DAEsysExtOp}  by the semi-inverse operators $\EuScript A_{gen}^{(-1)}$, $\EuScript A_1^{(-1)}$ and $\EuScript B_2^{(-1)}$ (the method of their calculation is indicated in Section \ref{BlockStruct}) respectively,  we get the equivalent system (where $x_{s_i}=S_i x$, $x_{p_i}=P_i x$, $i=1,2$)
\begin{align}
\frac{d}{dt} x_{s_1} &=\Es A_{gen}^{(-1)}\big(F_1 f(t,x)-\Es B_{gen} x_{s_1}-\Es B_{und} x_{s_2}\big),   \label{DAEsysExtDE1}\\
\frac{d}{dt}x_{p_1} &=\Es A_1^{(-1)} \big(Q_1 f(t,x)-\Es B_1 x_{p_1}\big),    \label{DAEsysExtDE2}\\
0 &=\Es B_2^{(-1)} Q_2 f(t,x)- x_{p_2},   \label{DAEsysExtAE1}\\
0 &=F_2 f(t,x)-\Es B_{ov}x_{s_1}.  \label{DAEsysExtAE2}
\end{align}

Thus, the singular semilinear DAE \eqref{DAE} has been reduced to the equivalent system \eqref{DAEsysExtDE1}--\eqref{DAEsysExtAE2} of ordinary differential equations (ODEs) and algebraic equations (AEs). Instead of the obtained system \eqref{DAEsysExtDE1}--\eqref{DAEsysExtAE2}, equivalent to the DAE \eqref{DAE}, one can also obtain the equivalent system with the restricted operators, as in \cite{Fil.UMJ}.

 \smallskip
Recall that a function $W\in C(D,\R)$, where $D\subset\Rn$ is some set  containing the origin, is called \emph{positive definite} if $W(0)=0$ and $W(x)>0$ for all $x\in D\setminus \{0\}$; a function $V\in C([t_+,\infty)\times D,\R)$ ($D$ means the same set) is called \emph{positive definite} if $V(t,0)\equiv 0$ and there exists a positive definite function $W\in C(D,\R)$ such that $V(t,x)\ge W(x)$ for all $t\in [t_+,\infty)$, $x\in D\setminus \{0\}$.

Let $V\in C^1([t_+,\infty)\times D_{s_1}\times D_{p_1},\R)$ be some positive definite function and $D_{s_1}\times D_{p_1}\subset X_{s_1}\times X_1$. The derivative  $V'_{\eqref{DAEsysExtDE1},\eqref{DAEsysExtDE2}}(t,x_{s_1},x_{p_1})$ of the function $V$ along the trajectories of the system  (or the derivative of $V$ with respect to the system) \eqref{DAEsysExtDE1}, \eqref{DAEsysExtDE2} has the form
 \begin{multline}\label{dVDAEsing}
V'_{\eqref{DAEsysExtDE1},\eqref{DAEsysExtDE2}}(t,x_{s_1},x_{p_1})= \frac{\partial V}{\partial t}(t,x_{s_1},x_{p_1})+\frac{\partial V}{\partial (x_{s_1},x_{p_1})}(t,x_{s_1},x_{p_1})\cdot \Upsilon(t,x) = \\
= \frac{\partial V}{\partial t}(t,x_{s_1},x_{p_1})+ \frac{\partial V}{\partial x_{s_1}}(t,x_{s_1},x_{p_1})\cdot  \left[\Es A_{gen}^{(-1)}\big(F_1 f(t,x)-\Es B_{gen} x_{s_1}-\Es B_{und} x_{s_2}\big)\right] + \\
+\frac{\partial V}{\partial x_{p_1}}(t,x_{s_1},x_{p_1})\cdot   \left[\Es A_1^{(-1)} \big(Q_1 f(t,x)-\Es B_1 x_{p_1}\big)\right],
 \end{multline}
where
\begin{equation}\label{Upsilon}
\Upsilon(t,x)=\begin{pmatrix}
\Es A_{gen}^{(-1)}\big(F_1 f(t,x)-\Es B_{gen} x_{s_1}-\Es B_{und} x_{s_2}\big) \\
\Es A_1^{(-1)} \big(Q_1 f(t,x)-\Es B_1 x_{p_1}\big)  \end{pmatrix}
\end{equation}
is a vector consisting of the right-hand sides of the equations \eqref{DAEsysExtDE1}, \eqref{DAEsysExtDE2}. As usual,  $\frac{\partial}{\partial(x_{s_1},x_{p_1})}= \left(\frac{\partial}{\partial x_{s_1}},\frac{\partial}{\partial x_{p_1}}\right)$.

When proving theorems, we will use the representation of an element $x\in\R^n$ in the form \eqref{xsr} (with respect to the direct sum of subspaces \eqref{Xssrr}) and its corresponding representation in the form $x=(x_{s_1},x_{s_2},x_{p_1},x_{p_2})$ (with respect to the corresponding direct product of subspaces). The correspondence between these representations is established below and, in general, is obvious.

Taking into account that the sum of subspaces in the decomposition \eqref{Xssrr} is direct and, accordingly, any element $x\in \Rn$ can be uniquely represented in the form $x=x_{s_1}+x_{s_2}+x_{p_1}+x_{p_2}$ \eqref{xsr}, one can identify an ordered collection $(x_{s_1},x_{s_2},x_{p_1},x_{p_2})\in X_{s_1}\times X_{s_2}\times X_1 \times X_2$, which is assumed to be a column vector  (i.e., a vector $(x_{s_1}^\T,x_{s_2}^\T,x_{p_1}^\T,x_{p_2}^\T)^\T$) when it is used in matrix calculations, with the corresponding element $x=x_{s_1}+x_{s_2}+x_{p_1}+x_{p_2}\in \Rn=X_{s_1}\dot+ X_{s_2}\dot+X_1 \dot+X_2$. 
Obviously, $\dim(X_{s_1}\times X_{s_2}\times X_1 \times X_2)=\dim(\Rn)=n$ and the space $\Rn$ is isomorphic to the space $X_{s_1}\times X_{s_2}\times X_1 \times X_2$.   A norm in the space $X_{s_1}\times X_{s_2}\times X_1 \times X_2$ is defined so that the norms of any element of the form $x=x_{s_1}$ and the corresponding element (ordered collection) $x=(x_{s_1},0,0,0)$ coincide and, similarly, the norms of the elements $x=x_{s_2}$, $x=x_{p_1}$, $x=x_{p_2}$ and their notations in the form of the corresponding ordered collections  $x=(0,x_{s_2},0,0)$, $x=(0,0,x_{p_1},0)$ and $x=(0,0,0,x_{p_2})$ coincide.  In addition, norms in $X_{s_1}\dot+ X_{s_2}\dot+X_1 \dot+X_2$ and $X_{s_1}\times X_{s_2}\times X_1\times X_2$ are defined so that they coincide for any element $x$. 
Thus, the representations $x=x_{s_1}+x_{s_2}+x_{p_1}+x_{p_2}$ and $x=(x_{s_1},x_{s_2},x_{p_1},x_{p_2})$ define the same element $x$, where $x_{s_i}\in X_{s_i}$, $x_{p_i}\in X_i$, $i=1,2$. 

 \smallskip
In a similar way, an ordered collection (a column vector) $y=(y_{s_1},y_{s_2},y_{p_1},y_{p_2})\in Y_{s_1}\times Y_{s_2}\times Y_1 \times Y_2$ can be identified with the corresponding element  $y=y_{s_1}+y_{s_2}+y_{p_1}+y_{p_2}\in \Rm=Y_{s_1}\dot+ Y_{s_2}\dot+Y_1 \dot+Y_2$.

 \smallskip
Consider one more representation of a vector $x\in \Rn$, which allows one to reduce the DAE \eqref{DAE} to an equivalent system of ODEs and AEs with the operators restricted to the subspaces from \eqref{Xssrr} (the system of such type was used in \cite{Fil.UMJ}).  Denote the dimensions of the subspaces from the decomposition \eqref{Xssrr} as $\dim X_{s_1}=b$, $\dim X_{s_2}=l$, $\dim X_1=a$ and $\dim X_2=d$ ($b+l+a+d=n$, $\dim X_s=b+l$, $\dim X_r=a+d$). Further, we choose some bases $\{s_j\}_{j=1}^b$, $\{s_{b+j}\}_{j= 1}^l$, $\{p_j\}_{j=1}^a$ and $\{p_{a+j}\}_{j=1}^d$ of the subspaces $X_{s_1}$, $X_{s_2}$, $X_1$ and $X_2$, respectively. 
The union of these bases is a basis of the space $\Rn = \R^b\times \R^l\times \R^a\times \R^d$, and with respect to this basis each vector $x\in \Rn$ ($x=x_{s_1}+x_{s_2}+x_{p_1}+x_{p_2}$) can be written in the form of the column vector $x=(w^T,\, \xi^T,\, z^T,\, u^T)^T$, where $w \in \R^b$, $\xi \in \R^l$, $z \in \R^a$ and $u\in\R^d$ are column vectors consisting of the coordinates of the vector $x$ with respect to the chosen bases in the subspaces  $X_{s_1}$, $X_{s_2}$, $X_1$ and $X_2$ respectively ($x=\sum\limits_{j=1}^b w_j\, s_j + \sum\limits_{j=1}^l \xi_j\, s_{b+j} + \sum\limits_{j=1}^a z_j\, p_j +\sum\limits_{j=1}^d u_j\, p_{a+j}$ is a representation of the vector $x$   with respect to the chosen basis of $\Rn$).  The specified one-to-one correspondence between $X_{s_1}$, $X_{s_2}$, $X_1$, $X_2$ and  $\R^b$, $\R^l$, $\R^a$, $\R^d$ (between each $x_{s_1}$, $x_{s_2}$, $x_{p_1}$, $x_{p_2}$ and each $w$, $\xi$, $z$, $u$), respectively, defines the linear operators ${S_b\colon \R^b \to X_{s_1}}$, ${S_l\colon \R^l \to X_{s_2}}$, ${P_a\colon  \R^a \to X_1}$, ${P_d\colon \R^d \to X_2}$ establishing an isomorphism between the spaces,  which have the inverse ${S_b^{-1}\colon X_{s_1}\to \R^b}$, ${S_l^{-1}\colon X_{s_2}\to \R^l}$, ${P_a^{-1}\colon  X_1\to \R^a}$ and ${P_d^{-1}\colon X_2 \to \R^d}$. We restrict the operators in the equations of the system \eqref{DAEsysProj1}--\eqref{DAEsysProj4} to the subspaces $X_{s_1}$, $X_{s_2}$, $X_1$, $X_2$, use the representation \eqref{xsr} and make the change of variables
 $$
x_{s_1}=S_b\, w,\quad x_{s_2}=S_l\,\xi,\quad x_{p_1}=P_a\, z,\quad x_{p_2}=P_d\,u,
 $$
using the operators introduced above. Then, applying the operators $A_{gen}^{-1}$, $A_1^{-1}$, $B_2^{-1}$ (see Section~\ref{BlockStruct}) and $S_b^{-1}$, $P_a^{-1}$, $P_d^{-1}$, we obtain the system (similar to the one in \cite{Fil.UMJ}), equivalent to the DAE \eqref{DAE}:
\begin{align}
\frac{d}{dt}w &= S_b^{-1} A_{gen}^{-1}\left(F_1 \tilde{f}(t,w,\xi,z,u)- B_{gen}S_b\,w - B_{und}S_l\,\xi \right), \label{DAEsysRestrDE1}\\ 
\frac{d}{dt}z &= P_{a}^{-1} A_1^{-1} \left(Q_1\tilde{f}(t,w,\xi,z,u)- B_1P_a\,z \right),  \label{DAEsysRestrDE2}\\
0 &=P_{d}^{-1} B_2^{-1} Q_{2} \tilde{f}(t,w,\xi,z,u)-u,  \label{DAEsysRestrAE1}\\
0 &=F_2 \tilde{f}(t,w,\xi,z,u)-B_{ov}S_b\, w, \label{DAEsysRestrAE2} 
\end{align}
where $\tilde{f}(t,w,\xi,z,u)= f(t,S_b\,w+S_l\,\xi+P_a\,z+P_d\,u)$ and the projectors $F_i$ and $Q_i$ ($i=1,2$) on the subspaces $Y_{s_i}$ and $Y_i$ are considered as the operators acting from $\Rm$ to the spaces $Y_{s_i}$ and $Y_i$, respectively. 
In general, the projectors $F_i$, $Q_i$ by definition belong to $\mathrm{L}(\Rm)$, and $Y_{s_i}$, $Y_i$ are their ranges, respectively ($\Ker F_i=(\Rm\setminus Y_{s_i})\cup\{0\}$ and $\Ker Q_i=(\Rm\setminus Y_i)\cup\{0\}$), but since the spaces $Y_{s_1}$, $Y_1$ and $Y_2$ are the domains of definition of the restricted (induced) operators $A_{gen}^{-1}$, $A_1^{-1}$ and $B_2^{-1}$, respectively, and in addition ${B_{ov}\in \mathrm{L}(X_{s_1},Y_{s_2})}$, here we consider the projectors $F_i$ and $Q_i$ as operators from $\mathrm{L}(\Rm,Y_{s_i})$ and $\mathrm{L}(\Rm,Y_i)$, respectively, with the preservation of their projection properties, i.e., $F_i y=F_i y_{s_i}=y_{s_i}\in Y_{s_i}$  and $Q_i y=Q_i y_{p_i}=y_{p_i}\in Y_i$ ($i=1,2$) for any $y=y_{s_1}+y_{s_2}+y_{p_1}+y_{p_2}\in \Rm$ (see the representation \eqref{ysr}), where $y_{s_i}\in Y_{s_i}$ and $y_{p_i}\in Y_i$, $i=1,2$, and, for convenience, we keep the previous notation for these operators.
For clarity, note that if we choose some basis $\{e_j\}_{j=1}^{m-d}$ of $Y_s\dot +Y_1$ and some basis $\{q_j\}_{j=1}^d$ of $Y_2$ (notice that $\dim Y_2=\dim X_2=d$), and we take the basis of $\Rm$ as the union of these bases, i.e., in the form $\{e_1,...,e_{m-d},q_1,...,q_d\}$, then the matrix corresponding to the mentioned operator $Q_2\in \mathrm{L}(\Rm,Y_2)$ with respect to the chosen bases in $\Rm$ and $Y_2$ will have the form $Q_2=\begin{pmatrix} 0 & I_{Y_2} \end{pmatrix}$, where $0$ is the null $d\times m-d$  matrix and $I_{Y_2}$ is the identity $d\times d$ matrix corresponding to the identity operator  $I_{Y_2}$ with respect to the chosen basis of $Y_2$.

 \section{Global solvability of singular (nonregular) semilinear DAEs}\label{SectGlobSolv}

\begin{remark}\label{RemConsistIni}
We introduce the manifold
\begin{equation}\label{L_tSing}
L_{t_*}=\{(t,x)\in [t_*,\infty)\times\Rn \mid (F_2+Q_2)[Bx-f(t,x)]=0\},
\end{equation}
where the number $t_*\ge t_+$ is a parameter. For example, the manifold $L_{t_+}$ considered below has the form \eqref{L_tSing} where $t_*=t_+$.
The manifold \eqref{L_tSing} is defined by the equations \eqref{DAEsysProj3} (or $Q_2[Bx-f(t,x)]=0$) and \eqref{DAEsysProj4} (or $F_2[Bx-f(t,x)]=0$) and can be represented as
 $$
L_{t_*}=\{(t,x)\in [t_*,\infty)\times\Rn \mid \text{$(t,x)$ satisfies the equations \eqref{DAEsysProj3}, \eqref{DAEsysProj4} }\}.
 $$

The initial values $t_0$, $x_0$ satisfying the \emph{consistency condition $(t_0,x_0)\in L_{t_+}$} are called \emph{consistent initial values}, and, accordingly, the initial point $(t_0,x_0)\in L_{t_+}$ is called a \emph{consistent initial point}.

The consistency condition $(t_0,x_0)\in L_{t_+}$ is one of the necessary conditions for the existence of a solution of the IVP \eqref{DAE}, \eqref{ini}.
\end{remark}

 \begin{theorem}\label{Th_SingGlobSol}
Let $f\in C([t_+,\infty)\times \Rn,\Rm)$ and $\lambda A+B$ be a singular pencil of operators such that its regular block $\lambda A_r+B_r$ from \eqref{penc} has the index not higher than 1. Let the following conditions be fulfilled:
\begin{enumerate}
\item\label{SoglSing} For any fixed  $t\in [t_+,\infty)$, $x_{s_1}\in X_{s_1}$, $x_{s_2}\in D_{s_2}$, where $D_{s_2}\subset X_{s_2}$ is a some set, and $x_{p_1}\in X_1$, there exists a unique $x_{p_2}\in X_2$ such that ${(t,x_{s_1}+x_{s_2}+x_{p_1}+x_{p_2})\in L_{t_+}}$.

\item\label{InvSing} There exists the partial derivative ${\frac{\partial}{\partial x} f\in C([t_+,\infty)\times \Rn, \mathrm{L}(\Rn,\Rm))}$.\; For any fixed $t_*$, ${x_*=x_{s_1}^*+x_{s_2}^*+x_{p_1}^*+x_{p_2}^*}$  such that $(t_*,x_*)\in L_{t_+}$ and $x_{s_2}^*\in D_{s_2}$, the operator $\Phi_{t_*,x_*}$ defined~by
     \begin{equation}\label{funcPhiSing}
    \Phi_{t_*,x_*}=\left[\frac{\partial Q_2f}{\partial x}(t_*,x_*)- B\right] P_2\colon X_2\to Y_2
     \end{equation}
     is invertible.

\item\label{ExtensSing} There exists a number $R>0$, a positive definite function  $V\in C^1([t_+,\infty)\times D_{s_1}\times D_{p_1},\R)$, where a set $D_{s_1}\times D_{p_1}\subset X_{s_1}\times X_1$ is such that $D_{s_1}\times D_{p_1}\supset \{\|(x_{s_1},x_{p_1})\|\ge R\}$, and a function $\chi\in C([t_+,\infty)\times (0,\infty),\R)$ such that:
  \begin{enumerate}[label={\upshape(\alph*)},ref={\upshape(\alph*)},topsep=1pt]
  \item\label{ExtensSing1}   ${V(t,x_{s_1},x_{p_1})\to\infty}$ uniformly in $t$ on each finite interval $[a,b)\subset[t_+,\infty)$ as ${\|(x_{s_1},x_{p_1})\|\to\infty}$;

  \item\label{ExtensSing2} for all $(t,x_{s_1}+x_{s_2}+x_{p_1}+x_{p_2})\in L_{t_+}$, for which $x_{s_2}\in D_{s_2}$ and ${\|(x_{s_1},x_{p_1})\|\ge R}$, the inequality
  \begin{equation}\label{LagrDAEsing}
    V'_{\eqref{DAEsysExtDE1},\eqref{DAEsysExtDE2}}(t,x_{s_1},x_{p_1})\le \chi\big(t,V(t,x_{s_1},x_{p_1})\big),
  \end{equation}
  where $V'_{\eqref{DAEsysExtDE1},\eqref{DAEsysExtDE2}}(t,x_{s_1},x_{p_1})$ has the form \eqref{dVDAEsing}, is satisfied;

  \item\label{GlobSolv} the differential inequality \eqref{L1v}, i.e., $dv/dt\le \chi(t,v)$  \,($t\in [t_+,\infty)$), does not have positive solutions with finite escape time.
  \end{enumerate}
 \end{enumerate}
Then for each initial point ${(t_0,x_0)\in L_{t_+}}$, where $S_2x_0\in D_{s_2}$, the initial value problem \eqref{DAE}, \eqref{ini} has a unique global (i.e., on $[t_0,\infty)$) solution $x(t)$ for which the choice of the function $\phi_{s_2}\in C([t_0,\infty),D_{s_2})$ with the initial value  $\phi_{s_2}(t_0)=S_2 x_0$ uniquely defines the component $S_2x(t)=\phi_{s_2}(t)$ when ${\rank(\lambda A+B)<n}$; when ${\rank(\lambda A+B)=n}$, the component $S_2 x$ is absent.
 \end{theorem}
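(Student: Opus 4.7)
The plan is to reduce the DAE \eqref{DAE} to a closed ODE system via the decomposition from Section~\ref{DAEsingReduce}, apply the classical Cauchy--Picard theorem for local existence and uniqueness, and then use the Lyapunov-type differential inequality \eqref{LagrDAEsing} to rule out finite-time blow-up. By Condition \ref{SoglSing} there is a well-defined map $\psi\colon [t_+,\infty)\times X_{s_1}\times D_{s_2}\times X_1\to X_2$ such that $x_{p_2}=\psi(t,x_{s_1},x_{s_2},x_{p_1})$ is the unique element rendering $(t,x)\in L_{t_+}$. Condition \ref{InvSing} identifies $\Phi_{t_*,x_*}$ (up to sign) with the partial Jacobian in the $x_{p_2}$ direction of the algebraic residual $R(t,x):=B_2 x_{p_2}-Q_2 f(t,x)$; indeed, for $v\in X_2$ one has $\Phi_{t_*,x_*}v=Q_2\partial_x f(t_*,x_*)v-B_2 v=-R'_{x_{p_2}}(t_*,x_*)v$. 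Invertibility of $\Phi_{t_*,x_*}$ together with continuity of $\partial_x f$ then delivers the local $C^1$-regularity of $\psi$ via the implicit function theorem, while Condition \ref{SoglSing} supplies the global single-valued extension.

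Next, I fix any continuous $\phi_{s_2}\colon [t_0,\infty)\to D_{s_2}$ with $\phi_{s_2}(t_0)=S_2 x_0$ (this step is vacuous when $X_{s_2}=\{0\}$), and set $x_{s_2}(t):=\phi_{s_2}(t)$ and $x_{p_2}(t):=\psi(t,x_{s_1}(t),\phi_{s_2}(t),x_{p_1}(t))$. Then the algebraic equations \eqref{DAEsysExtAE1}--\eqref{DAEsysExtAE2} hold by construction (since $(t,x)\in L_{t_+}$), while the differential equations \eqref{DAEsysExtDE1}--\eqref{DAEsysExtDE2} become a closed ODE system in $(x_{s_1},x_{p_1})\in X_{s_1}\times X_1$ whose right-hand side is $C^1$ in the spatial variables. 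The hypothesis $(t_0,x_0)\in L_{t_+}$ gives $P_2 x_0=\psi(t_0,S_1 x_0,S_2 x_0,P_1 x_0)$, so $(S_1 x_0,P_1 x_0)$ is consistent initial data for the reduced system, and Cauchy--Picard produces a unique local solution.

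For global extension I argue by contradiction: let $[t_0,\tau)$ be the maximal interval of existence and assume $\tau<\infty$. If $\|(x_{s_1}(t),x_{p_1}(t))\|<R$ throughout $[t_0,\tau)$, then continuity of $\psi$ and $\phi_{s_2}$ bounds the whole $x(t)$, so the standard continuation argument contradicts maximality. Otherwise some $t_1\in [t_0,\tau)$ satisfies $\|(x_{s_1}(t_1),x_{p_1}(t_1))\|\ge R$, and shrinking if necessary I may assume the bound $\|(x_{s_1}(t),x_{p_1}(t))\|\ge R$ persists on $[t_1,\tau)$. Set $v(t):=V(t,x_{s_1}(t),x_{p_1}(t))$; the chain rule with \eqref{dVDAEsing} and \eqref{LagrDAEsing} gives $dv/dt\le\chi(t,v)$ on $[t_1,\tau)$. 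Condition \ref{ExtensSing}\ref{GlobSolv} then forbids $v$ from escaping to infinity at $\tau$, and Condition \ref{ExtensSing}\ref{ExtensSing1} propagates this to boundedness of $(x_{s_1}(t),x_{p_1}(t))$ as $t\to\tau^-$; continuity of $\psi$ and $\phi_{s_2}$ again bounds $x(t)$ on $[t_0,\tau)$, contradicting maximality. Uniqueness on $[t_0,\infty)$ (for the fixed $\phi_{s_2}$) follows from uniqueness of $\psi$ and Cauchy--Picard uniqueness for the reduced ODE.

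The main obstacle I anticipate is the implicit function theorem step: the manifold $L_{t_+}$ is cut out by both the $Q_2$- and $F_2$-parts of $Bx-f(t,x)$, yet Condition \ref{InvSing} imposes invertibility only of the $Q_2$ block $\Phi_{t_*,x_*}$. Because $F_2 B P_2=0$ (the block $B_{ov}$ acts only on $X_{s_1}$), the $F_2$-constraint $B_{ov}x_{s_1}=F_2 f(t,x)$ cannot be inverted in $x_{p_2}$, so a naive joint implicit function theorem fails. One must use Condition \ref{SoglSing} to argue that, once $x_{p_2}=\psi(t,x_{s_1},x_{s_2},x_{p_1})$ is substituted, the $F_2$-equation becomes an identity in $(t,x_{s_1},x_{s_2},x_{p_1})$ on the relevant domain, so it imposes no additional smoothness requirement on $\psi$ and is automatically preserved along the constructed solution.
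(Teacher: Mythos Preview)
Your proposal is correct and follows essentially the same route as the paper: reduce to the system \eqref{DAEsysExtDE1}--\eqref{DAEsysExtAE2}, use Condition~\ref{SoglSing} together with the implicit function theorem (via the invertibility of $\Phi_{t_*,x_*}$) to obtain a globally defined $C^1$ resolution $x_{p_2}=\psi(t,x_{s_1},x_{s_2},x_{p_1})$, freeze $x_{s_2}=\phi_{s_2}(t)$, and then invoke the La~Salle--Lefschetz extension criterion (which you reprove directly) for the resulting ODE in $(x_{s_1},x_{p_1})$. The only imprecision is the phrase ``shrinking if necessary I may assume the bound \dots\ persists on $[t_1,\tau)$'': the trajectory can cross the sphere $\|(x_{s_1},x_{p_1})\|=R$ repeatedly, so you should instead use that maximality of $\tau<\infty$ forces $\|(x_{s_1}(t),x_{p_1}(t))\|\to\infty$ as $t\to\tau^-$, which yields a final $t_1$ after which the norm stays above $R$; the rest of your argument then goes through unchanged.
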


 \begin{remark}
In general, the operator defined (for fixed $t_*$, $x_*$) by the formula from \eqref{funcPhiSing}  belongs to $\mathrm{L}(\Rn,\Rm)$, i.e.,
 \begin{equation}\label{funcPhiSingExt}
\widehat{\Phi}_{t_*,x_*}:=\bigg[\frac{\partial Q_2f}{\partial x}(t_*,x_*)- B\bigg]P_2\in \mathrm{L}(\Rn,\Rm),
 \end{equation}
and acts so that $\widehat{\Phi}_{t_*,x_*}\colon\Rn\to Y_2$ and ${X_1\dot +X_s\subset \Ker(\widehat{\Phi}_{t_*,x_*})}$. Its restriction to $X_2$ is the operator $\Phi_{t_*,x_*}=\widehat{\Phi}_{t_*,x_*}\big|_{X_2}$ defined by \eqref{funcPhiSing}, and since the operator $\Phi_{t_*,x_*}$ has an inverse, 
then $\widehat{\Phi}_{t_*,x_*}\Rn=\widehat{\Phi}_{t_*,x_*}X_2=Y_2$ (${X_1\dot+X_s=\Ker(\widehat{\Phi}_{t_*,x_*})}$).   Note that the operator $\Phi_{t_*,x_*}$ and its inverse $\Phi_{t_*,x_*}^{-1}\in \mathrm{L}(Y_2,X_2)$ satisfy the equalities $\Phi_{t_*,x_*} \Phi_{t_*,x_*}^{-1}=Q_2\big|_{Y_2}$,  $\Phi_{t_*,x_*}^{-1} \Phi_{t_*,x_*}=P_2\big|_{X_2}$ since $P_2$, $Q_2$ are the identity operators in $X_2$, $Y_2$, respectively.
Thus, the extension $\widehat{\Phi}_{t_*,x_*}^{(-1)}\in \mathrm{L}(\Rm,\Rn)$ of the operator $\Phi_{t_*,x_*}^{-1}$ to $\Rm$ that satisfies the equalities
 $$
\widehat{\Phi}_{t_*,x_*}^{(-1)} \widehat{\Phi}_{t_*,x_*}=P_2,\quad \widehat{\Phi}_{t_*,x_*} \widehat{\Phi}_{t_*,x_*}^{(-1)}=Q_2,\quad    \widehat{\Phi}_{t_*,x_*}^{(-1)}=P_2 \widehat{\Phi}_{t_*,x_*}^{(-1)},
 $$
is a semi-inverse operator of $\widehat{\Phi}_{t_*,x_*}$, that is, $\widehat{\Phi}_{t_*,x_*}^{(-1)}\Rm=\widehat{\Phi}_{t_*,x_*}^{(-1)}Y_2=X_2$ and $\Phi_{t_*,x_*}^{-1}=\widehat{\Phi}_{t_*,x_*}^{(-1)}\big|_{Y_2}$.
 \end{remark}

 \begin{proof}
As shown above, the DAE \eqref{DAE} is equivalent to the system
\eqref{DAEsysExtDE1}--\eqref{DAEsysExtAE2}, i.e.,
\begin{align*}
\frac{d}{dt} x_{s_1} &=\Es A_{gen}^{(-1)}[F_1 f(t,x)-\Es B_{gen} x_{s_1}-\Es B_{und} x_{s_2}],  \\
\frac{d}{dt}x_{p_1} &=\Es A_1^{(-1)}[Q_1 f(t,x)-\Es B_1 x_{p_1}], \\
0 &=\Es B_2^{(-1)} Q_2 f(t,x)- x_{p_2},   \\
0 &=F_2 f(t,x)-\Es B_{ov}x_{s_1},
\end{align*}
where $x_{s_i}=S_i x\in X_{s_i}$ and $x_{p_i}=P_i x\in X_i$, $i=1,2$,  and the representation $x=x_{s_1}+x_{s_2}+x_{p_1}+x_{p_2}$ \eqref{xsr} is uniquely determined for each $x\in \Rn$.

Introduce the mapping
 \begin{equation}\label{HatPsiSing}
\widehat{\Psi}(t,x):=\Es B_2^{(-1)}Q_2 f(t,x)-x_{p_2}\colon [t_+,\infty)\times \Rn\to X_2,
 \end{equation}
where $x_{p_2}=P_2 x$. 
Then the equation \eqref{DAEsysExtAE1} is equivalent to the equation
\begin{equation}\label{DAEsysExtAE1equiv}
\widehat{\Psi}(t,x)=0.
\end{equation}
Since $\Es B_2^{(-1)}Q_2\Rm=\Es B_2^{(-1)}Y_2=X_2=B_2^{-1}Y_2= B_2^{-1}(Q_2\Rm)$ (recall that $B_2^{-1}=\EuScript B_2^{(-1)}\big|_{Y_2}$ and  $Q_2\Rm=Y_2$) and $Q_2f(t,x)\in Y_2$ for any $(t,x)$, then \eqref{DAEsysExtAE1equiv} is equivalent to the equation
 \begin{equation}\label{DAEsysExtAE1equiv2}
B_2^{-1}Q_2f(t,x)-x_{p_2}=0,
 \end{equation}
where the projector $Q_2$ on the subspace $Y_2$ is considered as the operator belonging to $\mathrm{L}(\Rm,Y_2)$, and at the same time, its projection properties are retained and, for convenience, its previous notation $Q_2$ does not change as well as for the system \eqref{DAEsysRestrDE1}--\eqref{DAEsysRestrAE2} presented in Section~\ref{DAEsingReduce}.

In what follows, we will assume that $Q_2\in \mathrm{L}(\Rm)$ is the projector on the subspace $Y_2$ defined in Section \ref{BlockStruct} if the product of operators $\Es B_2^{(-1)}Q_2$ (or $Q_2 \Es B_2$) is considered, and that $Q_2\in \mathrm{L}(\Rm,Y_2)$ is the operator having the same projection properties as the projector $Q_2$ defined in Section \ref{BlockStruct}, i.e., $Q_2 y=Q_2 y_{p_2}=y_{p_2}\in Y_2$ for any $y=y_{s_1}+y_{s_2}+y_{p_1}+y_{p_2}\in \Rm$ where $y_{s_i}\in Y_{s_i}$ and $y_{p_i}\in Y_i$ ($i=1,2$), if the product of operators $B_2^{-1}Q_2$  is considered. Since, in fact, the described differences are formal and become significant only in the transition from the operators to the corresponding matrices, then we keep the same notation $Q_2$ for both considered cases. Similar assumptions can be made concerning other projectors considered in Section \ref{BlockStruct}.

Taking into account the correspondence $x=x_{s_1}+x_{s_2}+x_{p_1}+x_{p_2}= (x_{s_1},x_{s_2},x_{p_1},x_{p_2})$, where $x_{s_i}\in X_{s_i}$ and $x_{p_i}\in X_i$, $i=1,2$, (the correspondence between $X_{s_1}\dot+ X_{s_2}\dot+X_1 \dot+X_2$ and $X_{s_1}\times X_{s_2}\times X_1\times X_2$~) which is established  in Section \ref{DAEsingReduce}, we denote
 $$
\tilde{f}(t,x_{s_1},x_{s_2},x_{p_1},x_{p_2})= f(t,x_{s_1}+x_{s_2}+x_{p_1}+x_{p_2})=f(t,x)
 $$
and consider the mapping
 \begin{equation}\label{funcPsiSing}
\Psi(t,x_{s_1},x_{s_2},x_{p_1},x_{p_2}):= B_2^{-1}Q_2 \tilde{f}(t,x_{s_1},x_{s_2},x_{p_1},x_{p_2})-x_{p_2},
 \end{equation}
where $\Psi\colon [t_+,\infty)\times X_{s_1}\times X_{s_2}\times X_1\times X_2\to X_2$. Then the equation \eqref{DAEsysExtAE1equiv2} can be written as
 \begin{equation}\label{DAEsysAE1equiv}
\Psi(t,x_{s_1},x_{s_2},x_{p_1},x_{p_2})=0,
 \end{equation}
and this equation is equivalent to the equation \eqref{DAEsysExtAE1equiv} and hence to the equation \eqref{DAEsysExtAE1}, as shown above.
Obviously, $\Psi\in C([t_+,\infty)\times X_{s_1}\times X_{s_2}\times X_1\times X_2,\,X_2)$ has continuous partial derivatives with respect to $x_{s_1}$, $x_{s_2}$, $x_{p_1}$, $x_{p_2}$, and its partial derivatives with respect to $(x_{s_1},x_{s_2},x_{p_1})$ and $x_{p_2}$ at the point $(t_*,x^*_{s_1},x^*_{s_2},x^*_{p_1},x^*_{p_2})$ have the form\;
$\frac{\partial\Psi (t_*,x^*_{s_1},x^*_{s_2},x^*_{p_1},x^*_{p_2})}{\partial (x_{s_1},x_{s_2},x_{p_1})}= B_2^{-1} \bigg(\frac{\partial Q_2\tilde{f} (t_*,x^*_{s_1},x^*_{s_2},x^*_{p_1},x^*_{p_2})}{\partial x_{s_1}}, \frac{\partial Q_2 \tilde{f} (t_*,x^*_{s_1},x^*_{s_2},x^*_{p_1},x^*_{p_2})}{\partial x_{s_2}}, \frac{\partial Q_2 \tilde{f}(t_*,x^*_{s_1},x^*_{s_2},x^*_{p_1},x^*_{p_2})}{\partial x_{p_1}}\bigg)\in \mathrm{L}(X_{s_1}\times X_{s_2}\times X_1,\, X_2)$,
 \begin{equation}\label{W_tx}
W_{t_*,x_*}:=\dfrac{\partial \Psi}{\partial x_{p_2}}(t_*,x^*_{s_1},x^*_{s_2},x^*_{p_1},x^*_{p_2}) = 
B_2^{-1}Q_2\left[\frac{\partial Q_2 f}{\partial x}(t_*,x_*)- B\right]\! P_2\big|_{X_2}=B_2^{-1}\Phi_{t_*,x_*}\in\mathrm{L}(X_2),
 \end{equation}
where $x_*=x^*_{s_1}+x^*_{s_2}+x^*_{p_1}+x^*_{p_2}$ and $\Phi_{t_*,x_*}\in \mathrm{L}(X_2,Y_2)$ is the operator defined by \eqref{funcPhiSing}.  Since for any fixed element $(t,x_{s_1}+x_{s_2}+x_{p_1}+x_{p_2})\in L_{t_+}$  such that $x_{s_2}\in D_{s_2}$ the operator $\Phi_{t,x}$ (where $x=x_{s_1}+x_{s_2}+x_{p_1}+x_{p_2}$) has the inverse $\Phi_{t,x}^{-1}\in \mathrm{L}(Y_2,X_2)$, then the operator $W_{t,x}$ also has the inverse $W_{t,x}^{-1}=\Phi_{t,x}^{-1}B_2\in \mathrm{L}(X_2)$ for the indicated $(t,x)$.

Note that a point $(t,x)\in [t_+,\infty)\times\Rn$ belongs to the manifold $L_{t_+}$ if and only if it satisfies the equations \eqref{DAEsysProj3}, \eqref{DAEsysProj4} or the equivalent equations, e.g.,  \eqref{DAEsysExtAE1}, \eqref{DAEsysExtAE2} or \eqref{DAEsysExtAE1}, \eqref{DAEsysAE1equiv}, where  $(x_{s_1},x_{s_2},x_{p_1},x_{p_2})=x_{s_1}+x_{s_2}+x_{p_1}+x_{p_2}=x$.

Take any fixed $t_*\in [t_+,\infty)$, $x_{s_1}^*\in X_{s_1}$, $x_{s_2}^*\in D_{s_2}$, $x_{p_1}^*\in X_1$. Then, by virtue of condition \ref{SoglSing}, there exists a unique $x_{p_2}^*\in X_2$ such that $(t_*,x_*)\in L_{t_+}$, where $x_*=x_{s_1}^*+x_{s_2}^*+x_{p_1}^*+x_{p_2}^*$.
As shown above, for this $(t_*,x_*)$ the operator \eqref{W_tx} has the inverse $W_{t_*,x_*}^{-1}\in \mathrm{L}(X_2)$. In addition, the function  $\Psi(t,x_{s_1},x_{s_2},x_{p_1},x_{p_2})$ has a continuous partial derivative with respect to $(x_{s_1},x_{s_2},x_{p_1},x_{p_2})$ at every point from $[t_+,\infty)\times X_{s_1}\times X_{s_2}\times X_1\times X_2$.
Using the implicit function theorems and fixed point theorems \cite{Schwartz1}, we obtain that there exist neighborhoods $U_\delta (t_*,x_{s_1}^*,x_{s_2}^*,x_{p_1}^*)=U_{\delta_1}(t_*)\times U_{\delta_2}(x_{s_1}^*)\times U_{\delta_3}(x_{s_2}^*)\times U_{\delta_4}(x_{p_1}^*)$  and $U_\varepsilon(x_{p_2}^*)$ (for $t_*=t_+$ there exists a semi-open interval $U_{\delta_1}(t_+)=[t_+,t_+ +\delta_1)$, and the neighborhood $U_{\delta_3}(x_{s_2}^*)$ can be closed)  and a unique function  $x_{p_2}=\mu(t,x_{s_1},x_{s_2},x_{p_1})\in C(U_\delta (t_*,x_{s_1}^*,x_{s_2}^*,x_{p_1}^*), U_\varepsilon (x_{p_2}^*))$, continuously differentiable in $(x_{s_1},x_{s_2},x_{p_1})$, such that   $\mu(t_*,x_{s_1}^*,x_{s_2}^*,x_{p_1}^*)=x_{p_2}^*$ and $\Psi(t,x_{s_1},x_{s_2},x_{p_1},\mu(t,x_{s_1},x_{s_2},x_{p_1}))=0$ for all $(t,x_{s_1},x_{s_2},x_{p_1})\in U_\delta (t_*,x_{s_1}^*,x_{s_2}^*,x_{p_1}^*)$, i.e., the function $\mu$ is a solution of the equation \eqref{DAEsysAE1equiv} with respect to $x_{p_2}$. 
Since the implicit function theorems \cite{Schwartz1} assume that the set of variables is open, then to prove the existence of an implicitly defined function with the above properties when $t_*=t_+$ (i.e., ${U_{\delta_1}(t_+)=[t_+,t_+ +\delta_1)}$) and when the set $D_{s_2}$ is not open  (accordingly, $U_{\delta_3}(x_{s_2}^*)$ can be closed), the fixed point theorems \cite[Theorems 46, $46_2$]{Schwartz1} as well as the proofs of the implicit function theorems \cite[Theorems~25,~28]{Schwartz1} (which in turn use the same fixed point theorems) are used.

Thus, it is proved that in some neighborhood of each (fixed) point $(t_*,x_{s_1}^*,x_{s_2}^*,x_{p_1}^*)\in [t_+,\infty)\times X_{s_1}\times D_{s_2}\times X_1$ there exists a unique solution $x_{p_2}=\mu_{t_*,x_{s_1}^*,x_{s_2}^*,x_{p_1}^*}(t,x_{s_1},x_{s_2},x_{p_1})$ of the equation \eqref{DAEsysAE1equiv} and, hence, the equivalent equation \eqref{DAEsysExtAE1}, and this solution is continuous in $(t,x_{s_1},x_{s_2},x_{p_1})$, continuously differentiable in $(x_{s_1},x_{s_2},x_{p_1})$ and satisfies the equality  $\mu_{t_*,x_{s_1}^*,x_{s_2}^*,x_{p_1}^*}(t_*,x_{s_1}^*,x_{s_2}^*,x_{p_1}^*)=x_{p_2}^* \in D_{p_2}$,  where the set $D_{p_2}\subset  X_2$ is such that $[t_+,\infty)\times (X_{s_1}\dot+ D_{s_2}\dot+ X_1\dot+ D_{p_2})\subset L_{t_+}$. Recall that $x_{p_2}^*$  is uniquely determined for each such $(t_*,x_{s_1}^*,x_{s_2}^*,x_{p_1}^*)$ by virtue of condition \ref{SoglSing}. We introduce the function
 $$
\eta\colon [t_+,\infty)\times X_{s_1}\times D_{s_2}\times X_1\to D_{p_2}
 $$
and define  $\eta(t,x_{s_1},x_{s_2},x_{p_1})= \mu_{t_*,x_{s_1}^*,x_{s_2}^*,x_{p_1}^*}(t,x_{s_1},x_{s_2},x_{p_1})$ at \,$(t,x_{s_1},x_{s_2},x_{p_1})=(t_*,x_{s_1}^*,x_{s_2}^*,x_{p_1}^*)$ for each $(t_*,x_{s_1}^*,x_{s_2}^*,x_{p_1}^*)\in [t_+,\infty)\times X_{s_1}\times D_{s_2}\times X_1$. Then the function $x_{p_2}=\eta(t,x_{s_1},x_{s_2},x_{p_1})$ is continuous in $(t,x_{s_1},x_{s_2},x_{p_1})$, continuously differentiable in $(x_{s_1},x_{s_2},x_{p_1})$ and satisfies the equation \eqref{DAEsysExtAE1} as well as the equation \eqref{DAEsysAE1equiv}, i.e., $\Psi(t,x_{s_1},x_{s_2},x_{p_1},\eta(t,x_{s_1},x_{s_2},x_{p_1}))=0$, for   $(t,x_{s_1},x_{s_2},x_{p_1})\in [t_+,\infty)\times X_{s_1}\times D_{s_2}\times X_1$. 
Let us prove the uniqueness of the function $\eta$.  Indeed, if there exists a function $x_{p_2}=\zeta(t,x_{s_1},x_{s_2},x_{p_1})$ that has the same properties at some point $(t_*,x_{s_1}^*,x_{s_2}^*,x_{p_1}^*)\in [t_+,\infty)\times X_{s_1}\times D_{s_2}\times X_1$ as the function $\eta(t,x_{s_1},x_{s_2},x_{p_1})$, then $\eta(t_*,x_{s_1}^*,x_{s_2}^*,x_{p_1}^*)= \zeta(t_*,x_{s_1}^*,x_{s_2}^*,x_{p_1}^*)=x_{p_2}^*$ due to condition \ref{SoglSing}. This holds for each point $(t_*,x_{s_1}^*,x_{s_2}^*,x_{p_1}^*)\in [t_+,\infty)\times X_{s_1}\times D_{s_2}\times X_1$ and hence there exists a unique function $x_{p_2}=\eta(t,x_{s_1},x_{s_2},x_{p_1})$ with the properties indicated above.

Choose any initial point $(t_0,x_0)\in L_{t_+}$, where $S_2x_0\in D_{s_2}$, and any function $\phi_{s_2}\in C([t_0,\infty),D_{s_2})$ satisfying the condition $\phi_{s_2}(t_0)=S_2 x_0$. Substitute the chosen function into $\eta$ and denote $q(t,x_{s_1},x_{p_1})=\eta(t,x_{s_1},\phi_{s_2}(t),x_{p_1})$.
Further, we substitute the functions $x_{p_2}=q(t,x_{s_1},x_{p_1})$ and $x_{s_2}=\phi_{s_2}(t)$ in \eqref{DAEsysExtDE1}, \eqref{DAEsysExtDE2} and obtain the system
\begin{align*}
\frac{d}{dt} x_{s_1} &=\Es A_{gen}^{(-1)} \big[F_1\tilde{f}(t,x_{s_1},\phi_{s_2}(t),x_{p_1},q(t,x_{s_1},x_{p_1}))- \Es B_{gen} x_{s_1}-\Es B_{und} \phi_{s_2}(t)\big], \\ 
\frac{d}{dt}x_{p_1} &=\Es A_1^{(-1)} \big[Q_1\tilde{f}(t,x_{s_1},\phi_{s_2}(t),x_{p_1},q(t,x_{s_1},x_{p_1}))- \Es B_1 x_{p_1}\big].  
\end{align*}
We write this system in the form
\begin{equation}\label{DAEsysExtDEeta}
\frac{d}{dt}\omega=\widetilde{\Upsilon}(t,\omega),
\end{equation}
 \begin{align*}
& \text{where}\quad \omega=\begin{pmatrix} x_{s_1} \\ x_{p_1} \end{pmatrix},\quad \widetilde{\Upsilon}(t,\omega) =\begin{pmatrix} \Es A_{gen}^{(-1)} \big[F_1\tilde{f}(t,x_{s_1},\phi_{s_2}(t),x_{p_1},q(t,x_{s_1},x_{p_1}))- \Es B_{gen} x_{s_1}-\Es B_{und} \phi_{s_2}(t)\big] \\
\Es A_1^{(-1)} \big[Q_1\tilde{f}(t,x_{s_1},\phi_{s_2}(t),x_{p_1},q(t,x_{s_1},x_{p_1}))- \Es B_1 x_{p_1}\big] \end{pmatrix} = \\
& =\Upsilon (t,x_{s_1}+\phi_{s_2}(t)+x_{p_1}+q(t,x_{s_1},x_{p_1}))\quad \text{($\Upsilon(t,x)$ is defined in \eqref{Upsilon})}.
 \end{align*}
Due to the properties of $\tilde{f}(t,x_{s_1},x_{s_2},x_{p_1},x_{p_2})$, $q(t,x_{s_1},x_{p_1})$ and $\phi_{s_2}(t)$, the function $\widetilde{\Upsilon}(t,\omega)$ is continuous in $(t,\omega)$ and continuously differentiable in $\omega$ on $[t_0,\infty)\times X_{s_1} \times X_1$. Consequently, there exists a unique solution $\omega=\omega(t)$ of \eqref{DAEsysExtDEeta} on some interval $[t_0,\beta)$ which satisfies the initial condition
\begin{equation}\label{DEeta_ini}
\omega(t_0)=\omega_0,\quad \omega_0=(x_{s_1,0}^\T,x_{p_1,0}^\T)^\T,\quad x_{s_1,0}=S_1x_0,\; x_{p_1,0}=P_1x_0.
\end{equation}
Let us prove that the maximal interval of the existence of the solution is  $[t_0,\infty)$.

Let us introduce the function $V(t,\omega):=V(t,x_{s_1},x_{p_1})$, where $V(t,x_{s_1},x_{p_1})$ from the theorem condition \ref{ExtensSing}. It follows from condition \ref{ExtensSing} that the derivative of $V$  along the trajectories of the equation \eqref{DAEsysExtDEeta} satisfies the inequality
 \begin{equation}\label{L1dVdtSing}
V'_{\eqref{DAEsysExtDEeta}}(t,\omega)=\dfrac{\partial V}{\partial t}(t,\omega)+\dfrac{\partial V}{\partial \omega}(t,\omega)\cdot \widetilde{\Upsilon}(t,\omega)\le \chi\big(t,V(t,\omega)\big)
 \end{equation}
for all $t\ge t_0$, $\|\omega\|\ge R$. Due to condition \ref{GlobSolv},  the differential inequality \eqref{L1v}, $t\ge t_0$, does not have positive solutions with finite escape time. Hence, by \cite[Chapter~IV, Theorem~XIII]{LaSal-Lef} every solution of  \eqref{DAEsysExtDEeta} exists on $[t_0,\infty)$ (every solution is defined in the future \cite{LaSal-Lef}), and, consequently, the solution $\omega(t)=(\omega_{s_1}(t)^\T,\omega_{p_1}(t)^\T)^\T$ is global.

Thus, the functions (components of the global solution $\omega(t)$ of the equation \eqref{DAEsysExtDEeta}) $\omega_{s_1}\in C^1([t_0,\infty),X_{s_1})$, $\omega_{p_1}\in C^1([t_0,\infty),X_1)$ and the function  $q(t,\omega_{s_1}(t),\omega_{p_1}(t))$ are a solution of the system \eqref{DAEsysExtDE1}, \eqref{DAEsysExtDE2} and \eqref{DAEsysExtAE1} on $[t_0,\infty)$, and the equation \eqref{DAEsysExtAE2} is an identity since $\big(t,\omega_{s_1}(t),\phi_{s_2}(t),\omega_{p_1}(t), q(t,\omega_{s_1}(t),\omega_{p_1}(t))\big)\in L_{t_0}$ for all $t\in [t_0,\infty)$. Therefore, the function
\begin{equation*}
x(t)=\omega_{s_1}(t)+\phi_{s_2}(t)+\omega_{p_1}(t)+ q(t,\omega_{s_1}(t),\omega_{p_1}(t))
\end{equation*}
is a solution of the IVP \eqref{DAE}, \eqref{ini} on $[t_0,\infty)$. Since the initial point $(t_0,x_0)$ was chosen arbitrarily, the existence of a global solution is proved for each initial point $(t_0,x_0)\in L_{t_+}$, where $S_2x_0\in D_{s_2}$.

The chosen function $\phi_{s_2}\in C([t_0,\infty),D_{s_2})$ with the initial value $\phi_{s_2}(t_0)=S_2 x_0$, which can be regarded as a functional parameter, uniquely defines the component $S_2x(t)=\phi_{s_2}(t)$ of the solution $x(t)$.  If $\rank(\lambda A+B)=n\le m$, then $X_{s_2}=\{0\}$, $S_2=0$ and the component $S_2 x$  is absent.

Let us prove the uniqueness of the global solution $x(t)$ with the fixed component  $S_2x(t)=\phi_{s_2}(t)$. It follows from the above that the global solution $\omega(t)$ of the IVP \eqref{DAEsysExtDEeta}, \eqref{DEeta_ini} is unique on some interval $[t_0,\beta)$. Assume that the solution is not unique on $[t_0,\infty)$, then there exists a number $t_*\ge \beta$ and two different global solutions $\omega(t)$, $\hat{\omega}(t)$ with the common value $\omega_*=\omega(t_*)=\hat{\omega}(t_*)$.  Take $(t_*,\omega_*)$ as an initial point, then on some interval $[t_*,\beta_1)$ there must exist a unique solution of \eqref{DAEsysExtDEeta} with the initial value $\omega(t_*)=\omega_*$. Thus,   we have a contradiction. Hence, the solution $\omega(t)$ is unique, and hence the solution $x(t)$ is also unique.
 \end{proof}

A mapping $f(t,x)$ of a set $J\times D$, where $J$ is an interval in $\R$, $D\subset X$ and $X$ is a linear space, into a linear space $Y$ is said to \emph{satisfy locally a Lipschitz condition} (or to be \emph{locally Lipschitz continuous}) \emph{with respect to $x$ on} $J\times D$ if for each point $(t_*,x_*)\in J\times D$ there exists a neighborhood $U(t_*,x_*)$ and a constant $L\ge 0$ such that $\|f(t,x_1)-f(t,x_2)\|\le L\|x_1-x_2\|$ for any $(t,x_1),(t,x_2)\in U(t_*,x_*)$.

 \begin{theorem}\label{Th_SingGlobSol-Lipsch}
Let $f\in C([t_+,\infty)\times \Rn,\Rm)$ and $\lambda A+B$ be a singular pencil of operators such that its regular block $\lambda A_r+B_r$ from \eqref{penc} has the index not higher than 1. Assume that conditions \ref{SoglSing} and \ref{ExtensSing} of Theorem \ref{Th_SingGlobSol} hold and that condition \ref{InvSing} of Theorem \ref{Th_SingGlobSol} is replaced by the following:
\begin{enumerate}
\addtocounter{enumi}{1}
\item\label{InvSing-Lipsch} A function $f(t,x)$ satisfy locally a Lipschitz condition with respect to $x$ on $[t_+,\infty)\times \Rn$.\; For~any fixed $t_*$, ${x_*=x_{s_1}^*+x_{s_2}^*+x_{p_1}^*+x_{p_2}^*}$  such that ${(t_*,x_*)\in L_{t_+}}$ and $x_{s_2}^*\in D_{s_2}$, there exists a neighborhood $U_\delta(t_*,x_{s_1}^*,x_{s_2}^*,x_{p_1}^*)= U_{\delta_1}(t_*)\times U_{\delta_2}(x_{s_1}^*)\times U_{\delta_3}(x_{s_2}^*)\times U_{\delta_4}(x_{p_1}^*)$, where the neighborhood $U_{\delta_3}(x_{s_2}^*)\subseteq D_{s_2}$ can be closed, a neighborhood $U_\varepsilon(x_{p_2}^*)$ (numbers $\delta, \varepsilon>0$ depend on  the choice of $t_*$, $x_*$) 
    and an invertible operator $\Phi_{t_*,x_*}\in \mathrm{L}(X_2,Y_2)$ such that for each $(t,x_{s_1},x_{s_2},x_{p_1})\in U_\delta(t_*,x_{s_1}^*,x_{s_2}^*,x_{p_1}^*)$ and each $x_{p_2}^i\in U_\varepsilon(x_{p_2}^*)$, $i=1,2$, the mapping
     \begin{equation}\label{tildePsiSing}
    \widetilde{\Psi}(t,x_{s_1},x_{s_2},x_{p_1},x_{p_2})\!:=\! Q_2f(t,x_{s_1}+x_{s_2}+x_{p_1}+x_{p_2})-B\big|_{X_2}x_{p_2}\colon\! [t_+,\infty)\times X_{s_1}\times X_{s_2}\times X_1\times X_2\to Y_2
     \end{equation}
   satisfies the inequality
     \begin{equation}\label{ContractiveMapPsi}
   \|\widetilde{\Psi}(t,x_{s_1},x_{s_2},x_{p_1},x_{p_2}^1)- \widetilde{\Psi}(t,x_{s_1},x_{s_2},x_{p_1},x_{p_2}^2)-\Phi_{t_*,x_*} [x_{p_2}^1-x_{p_2}^2]\|\le c\|x_{p_2}^1-x_{p_2}^2\|,
     \end{equation}
    where $c=c(\delta,\varepsilon)$ is such that
   \begin{equation}\label{ContractiveConst}
    \lim\limits_{\delta,\,\varepsilon\to 0} c(\delta,\varepsilon)<\|\Phi_{t_*,x_*}^{-1}\|^{-1}.
   \end{equation}
 \end{enumerate}
Then for each initial point ${(t_0,x_0)\in L_{t_+}}$, where $S_2x_0\in D_{s_2}$, the initial value problem \eqref{DAE}, \eqref{ini} has a unique global (i.e., on $[t_0,\infty)$) solution $x(t)$ for which the choice of the function $\phi_{s_2}\in C([t_0,\infty),D_{s_2})$ with the initial value  $\phi_{s_2}(t_0)=S_2 x_0$ uniquely defines the component $S_2x(t)=\phi_{s_2}(t)$ when ${\rank(\lambda A+B)<n}$; when ${\rank(\lambda A+B)=n}$, the component $S_2 x$ is absent.
 \end{theorem}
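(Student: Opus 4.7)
My plan is to follow the proof of Theorem \ref{Th_SingGlobSol} essentially verbatim, replacing the one step where the classical ($C^1$) implicit function theorem is invoked by a Hildebrandt--Graves construction based on Banach's contraction principle; the hypotheses \eqref{ContractiveMapPsi}--\eqref{ContractiveConst} are tailored precisely for this.

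After the same reduction of \eqref{DAE} to the mixed system \eqref{DAEsysExtDE1}--\eqref{DAEsysExtAE2}, I would observe that $Q_2B=BP_2$ together with the block structure \eqref{rab} gives $Q_2Bx=B\big|_{X_2}x_{p_2}$, so the algebraic equation \eqref{DAEsysExtAE1} is equivalent to $\widetilde{\Psi}(t,x_{s_1},x_{s_2},x_{p_1},x_{p_2})=0$ with $\widetilde{\Psi}$ defined by \eqref{tildePsiSing}. For a fixed reference point $(t_*,x_*)\in L_{t_+}$ with $x_{s_2}^*\in D_{s_2}$, I consider
\[
T(x_{p_2}) := x_{p_2}-\Phi_{t_*,x_*}^{-1}\,\widetilde{\Psi}(t,x_{s_1},x_{s_2},x_{p_1},x_{p_2})\colon X_2\to X_2,
\]
parametrised by $(t,x_{s_1},x_{s_2},x_{p_1})\in U_\delta(t_*,x_{s_1}^*,x_{s_2}^*,x_{p_1}^*)$, whose fixed points are exactly the solutions of $\widetilde{\Psi}=0$. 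Rewriting $T(u)-T(v)=\Phi_{t_*,x_*}^{-1}\bigl[\Phi_{t_*,x_*}(u-v)-(\widetilde{\Psi}(\cdot,u)-\widetilde{\Psi}(\cdot,v))\bigr]$ and applying \eqref{ContractiveMapPsi} yields $\|T(u)-T(v)\|\le \|\Phi_{t_*,x_*}^{-1}\|\,c(\delta,\varepsilon)\,\|u-v\|$. By \eqref{ContractiveConst} I can shrink $\delta,\varepsilon$ so that $\kappa:=\|\Phi_{t_*,x_*}^{-1}\|\,c(\delta,\varepsilon)<1$; the continuity of $\widetilde{\Psi}$ (inherited from that of $f$) together with $\widetilde{\Psi}(t_*,x_{s_1}^*,x_{s_2}^*,x_{p_1}^*,x_{p_2}^*)=0$ then allows a further shrinking so that $T$ carries $\overline{U_\varepsilon(x_{p_2}^*)}$ into itself, and Banach's contraction principle produces a unique fixed point $x_{p_2}=\mu(t,x_{s_1},x_{s_2},x_{p_1})$ with $\mu(t_*,x_{s_1}^*,x_{s_2}^*,x_{p_1}^*)=x_{p_2}^*$.

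The standard parameter-dependence estimate for parametric contractions,
\[
(1-\kappa)\,\|\mu(\alpha)-\mu(\alpha')\|\le \|\Phi_{t_*,x_*}^{-1}\|\cdot \|\widetilde{\Psi}(\alpha,\mu(\alpha'))-\widetilde{\Psi}(\alpha',\mu(\alpha'))\|\qquad (\alpha:=(t,x_{s_1},x_{s_2},x_{p_1})),
\]
combined with the local Lipschitz continuity of $f$ in $x$, shows that $\mu$ is continuous in $(t,x_{s_1},x_{s_2},x_{p_1})$ and locally Lipschitz in $(x_{s_1},x_{s_2},x_{p_1})$ (uniformly on compact $t$-intervals). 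Condition \ref{SoglSing} then glues these local $\mu$'s into a single function $\eta\colon [t_+,\infty)\times X_{s_1}\times D_{s_2}\times X_1\to D_{p_2}$ with the same regularity, exactly as in the proof of Theorem \ref{Th_SingGlobSol}.

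The remainder of that proof transfers directly: for any $\phi_{s_2}\in C([t_0,\infty),D_{s_2})$ with $\phi_{s_2}(t_0)=S_2x_0$, substituting $x_{s_2}=\phi_{s_2}(t)$ and $x_{p_2}=q(t,x_{s_1},x_{p_1}):=\eta(t,x_{s_1},\phi_{s_2}(t),x_{p_1})$ into \eqref{DAEsysExtDE1}, \eqref{DAEsysExtDE2} produces an ODE $d\omega/dt=\widetilde{\Upsilon}(t,\omega)$ whose right-hand side is continuous in $t$ and locally Lipschitz in $\omega=(x_{s_1},x_{p_1})$; the Cauchy--Lipschitz theorem yields a unique local solution of \eqref{DEeta_ini}, condition \ref{ExtensSing} together with \cite[Chapter~IV,~Theorem~XIII]{LaSal-Lef} extends it to $[t_0,\infty)$, and assembling $x(t)=\omega_{s_1}(t)+\phi_{s_2}(t)+\omega_{p_1}(t)+q(t,\omega_{s_1}(t),\omega_{p_1}(t))$ with the same uniqueness argument as in Theorem \ref{Th_SingGlobSol} completes the proof. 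The main technical obstacle is the parameter-dependence bookkeeping in the Banach fixed point step --- keeping the neighbourhoods $U_\delta$, $U_\varepsilon$ small enough that $T$ is simultaneously a contraction on $\overline{U_\varepsilon(x_{p_2}^*)}$ and stabilises that ball for every parameter in $U_\delta$, while producing an implicit function $\eta$ with enough Lipschitz regularity for the subsequent Cauchy--Lipschitz step --- which is exactly the substitute needed for the failure of the $C^1$ implicit function theorem used in the proof of Theorem \ref{Th_SingGlobSol}.
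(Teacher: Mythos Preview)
Your proposal is correct and matches the paper's own proof essentially step for step: the paper also rewrites the algebraic constraint as $\widetilde{\Psi}=0$, introduces the same Newton--Kantorovich map $\mathcal{N}(x_{p_2})=x_{p_2}-\Phi_{t_*,x_*}^{-1}\widetilde{\Psi}(\cdot,x_{p_2})$ (your $T$), uses \eqref{ContractiveMapPsi}--\eqref{ContractiveConst} to get a contraction, shrinks $\delta$ so that the ball is self-mapped, derives local Lipschitz dependence of the fixed point via the same parametric-contraction estimate you wrote, and then glues and finishes exactly as in Theorem~\ref{Th_SingGlobSol}. The only cosmetic difference is that the paper packages the fixed-point step as a separate Lemma~\ref{Lemma-ImplicitFunc} and works in parallel with $\Psi=B_2^{-1}\widetilde{\Psi}$, but the argument is identical.
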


 \begin{remark}
If a function $f(t,x)$ has the partial derivative $\frac{\partial}{\partial x} f\in C([t_+,\infty)\times \Rn, \mathrm{L}(\Rn,\Rm))$, then the function \eqref{tildePsiSing} has the continuous partial derivatives with respect to $x_{s_1}$, $x_{s_2}$, $x_{p_1}$, $x_{p_2}$ on $[t_+,\infty)\times X_{s_1}\times X_{s_2}\times X_1\times X_2$ and
 \begin{equation*}
\dfrac{\partial\widetilde{\Psi}}{\partial x_{p_2}}(t_*,x^*_{s_1},x^*_{s_2},x^*_{p_1},x^*_{p_2})=\Phi_{t_*,x_*},
 \end{equation*}
where the operator $\Phi_{t_*,x_*}$  is defined by \eqref{funcPhiSing}, $x_*=x_{s_1}^*+x_{s_2}^*+x_{p_1}^*+x_{p_2}^*$.
 \end{remark}
 \begin{corollary}\label{Propos-SingGlobSol}
If the conditions of Theorem~\ref{Th_SingGlobSol} are fulfilled, then the conditions of Theorem~\ref{Th_SingGlobSol-Lipsch} are also fulfilled.
 \end{corollary}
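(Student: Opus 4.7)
The plan is to note that conditions \ref{SoglSing} and \ref{ExtensSing} are literally the same in both theorems, so only the replacement of condition \ref{InvSing} by condition \ref{InvSing-Lipsch} needs to be addressed. First I would recall that if $\partial_x f\in C([t_+,\infty)\times\Rn,\mathrm{L}(\Rn,\Rm))$, then $f$ is locally Lipschitz in $x$: for any fixed $(t_*,x_*)$, on a compact closed neighborhood the norm $\|\partial_x f(t,x)\|$ is bounded, and the local Lipschitz bound then follows from the standard mean value inequality applied along the segment connecting two points.

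Next I would take as the required invertible operator in condition \ref{InvSing-Lipsch} precisely the operator $\Phi_{t_*,x_*}\colon X_2\to Y_2$ from \eqref{funcPhiSing}, which is invertible by condition \ref{InvSing} of Theorem~\ref{Th_SingGlobSol}. The key computation is to identify $\Phi_{t_*,x_*}$ with the partial derivative $\partial\widetilde{\Psi}/\partial x_{p_2}$ at $(t_*,x_{s_1}^*,x_{s_2}^*,x_{p_1}^*,x_{p_2}^*)$. Since $x=x_{s_1}+x_{s_2}+x_{p_1}+x_{p_2}$, the chain rule applied to $\widetilde{\Psi}(t,x_{s_1},x_{s_2},x_{p_1},x_{p_2})=Q_2 f(t,x)-B\big|_{X_2}x_{p_2}$ yields
\begin{equation*}
\frac{\partial\widetilde{\Psi}}{\partial x_{p_2}}(t_*,x^*_{s_1},x^*_{s_2},x^*_{p_1},x^*_{p_2})=\left[\frac{\partial Q_2 f}{\partial x}(t_*,x_*)-B\right]P_2\Big|_{X_2}=\Phi_{t_*,x_*},
\end{equation*}
where I use that $P_2$ is the identity on $X_2$ and that $B\colon X_2\to Y_2$ already by the block structure.

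Then I would apply the Newton--Leibniz formula (fundamental theorem of calculus along the segment joining $x_{p_2}^2$ and $x_{p_2}^1$, which lies in the convex ball $U_\varepsilon(x_{p_2}^*)$) to write
\begin{equation*}
\widetilde{\Psi}(t,x_{s_1},x_{s_2},x_{p_1},x_{p_2}^1)-\widetilde{\Psi}(t,x_{s_1},x_{s_2},x_{p_1},x_{p_2}^2)=\int_0^1\frac{\partial\widetilde{\Psi}}{\partial x_{p_2}}\bigl(t,x_{s_1},x_{s_2},x_{p_1},x_{p_2}^2+\theta(x_{p_2}^1-x_{p_2}^2)\bigr)d\theta\cdot(x_{p_2}^1-x_{p_2}^2).
\end{equation*}
Subtracting $\Phi_{t_*,x_*}[x_{p_2}^1-x_{p_2}^2]$ and taking norms, the estimate \eqref{ContractiveMapPsi} holds with
\begin{equation*}
c(\delta,\varepsilon)=\sup\left\|\frac{\partial\widetilde{\Psi}}{\partial x_{p_2}}(t,x_{s_1},x_{s_2},x_{p_1},x_{p_2})-\Phi_{t_*,x_*}\right\|,
\end{equation*}
where the supremum is taken over $(t,x_{s_1},x_{s_2},x_{p_1})\in\overline{U_\delta(t_*,x_{s_1}^*,x_{s_2}^*,x_{p_1}^*)}$ and $x_{p_2}\in\overline{U_\varepsilon(x_{p_2}^*)}$.

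Finally, I would use the continuity of $\partial\widetilde{\Psi}/\partial x_{p_2}$ (inherited from the continuity of $\partial_x f$) at the fixed point $(t_*,x_{s_1}^*,x_{s_2}^*,x_{p_1}^*,x_{p_2}^*)$ to conclude that $c(\delta,\varepsilon)\to 0$ as $\delta,\varepsilon\to 0$, so in particular $\lim_{\delta,\varepsilon\to 0}c(\delta,\varepsilon)=0<\|\Phi_{t_*,x_*}^{-1}\|^{-1}$, i.e. condition \eqref{ContractiveConst} is fulfilled. The only mildly delicate point is the case $t_*=t_+$ and the case when $U_{\delta_3}(x^*_{s_2})$ must be taken as a (relatively) closed neighborhood inside $D_{s_2}$, but both are unproblematic: shrinking $\delta,\varepsilon$ is compatible with these one-sided/closed neighborhoods, and the supremum used to define $c(\delta,\varepsilon)$ tends to zero regardless.
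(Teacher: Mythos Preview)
Your proof is correct and follows essentially the same approach as the paper: both note that conditions \ref{SoglSing} and \ref{ExtensSing} coincide, that the continuous partial derivative yields local Lipschitz continuity, and that one can take for $\Phi_{t_*,x_*}$ in condition~\ref{InvSing-Lipsch} precisely the operator \eqref{funcPhiSing}, which equals $\partial\widetilde{\Psi}/\partial x_{p_2}$ at the base point. The paper simply asserts that condition~\ref{InvSing-Lipsch} ``is readily verified'' from this identification, whereas you supply the verification explicitly via the Newton--Leibniz integral representation and the continuity of $\partial\widetilde{\Psi}/\partial x_{p_2}$ to obtain $c(\delta,\varepsilon)\to 0$.
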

\begin{proof}[The proof of Corollary~\ref{Propos-SingGlobSol}]
Obviously, it follows from the existence of $\frac{\partial}{\partial x} f\in C([t_+,\infty)\times \Rn, \mathrm{L}(\Rn,\Rm))$ that $f(t,x)$ satisfy locally a Lipschitz condition with respect to $x$ on $[t_+,\infty)\times \Rn$. 
Take $\Phi_{t_*,x_*}$ defined by \eqref{funcPhiSing} as the operator $\Phi_{t_*,x_*}$ appearing in condition \ref{InvSing-Lipsch} of Theorem~\ref{Th_SingGlobSol-Lipsch}. Then $\Phi_{t_*,x_*}=\dfrac{\partial\widetilde{\Psi}}{\partial x_{p_2}}(t_*,x_{s_1}^*,x_{s_2}^*,x_{p_1}^*,x_{p_2}^*)$, where $x_*=x_{s_1}^*+x_{s_2}^*+x_{p_1}^*+x_{p_2}^*$, and there exists $\Phi_{t_*,x_*}^{-1}\in \mathrm{L}(Y_2,X_2)$  by virtue of condition \ref{InvSing} of Theorem~\ref{Th_SingGlobSol}. It is readily verified that  condition \ref{InvSing-Lipsch} of Theorem~\ref{Th_SingGlobSol-Lipsch}, where $\Phi_{t_*,x_*}$ is the operator \eqref{funcPhiSing}, is satisfied. The rest of the conditions of Theorems~\ref{Th_SingGlobSol} and Theorem~\ref{Th_SingGlobSol-Lipsch} coincide.
 \end{proof}

 \begin{proof}[The proof of Theorem~\ref{Th_SingGlobSol-Lipsch}]
As in the proof of Theorem~\ref{Th_SingGlobSol},  we consider the system \eqref{DAEsysExtDE1}--\eqref{DAEsysExtAE2} equivalent to the DAE \eqref{DAE}.

Note that from the representation $x=x_{s_1}+x_{s_2}+x_{p_1}+x_{p_2}$  \eqref{xsr} of any element $x\in \Rn$ it follows that for any norm $\|\cdot\|_\Rn$ in $\Rn=X_{s_1}\dot+ X_{s_2}\dot+X_1 \dot+X_2$ the inequality $\|x\|_\Rn\le \|x_{s_1}\|_\Rn+\|x_{s_2}\|_\Rn+ \|x_{p_1}\|_\Rn+\|x_{p_2}\|_\Rn$ holds. When proving the theorem, we use the ``maximal'' norm, i.e., we define the norm $\|\cdot\|$ in $X_{s_1}\dot+ X_{s_2}\dot+X_1 \dot+X_2$ as
$\|x\|=\|x_{s_1}\|+\|x_{s_2}\|+ \|x_{p_1}\|+\|x_{p_2}\|$, where we denote by $\|x_{s_1}\|=\|x_{s_1}\|_{X_{s_1}}$, $\|x_{s_2}\|=\|x_{s_2}\|_{X_{s_2}}$, $\|x_{p_1}\|=\|x_{p_1}\|_{X_1}$ and $\|x_{p_2}\|=\|x_{p_2}\|_{X_2}$ the norms of the components $x_{s_1}$, $x_{s_2}$, $x_{p_1}$ and $x_{p_2}$ in the subspaces $X_{s_1}$, $X_{s_2}$, $X_1$ and $X_2$, respectively. Taking into account the correspondence $x=x_{s_1}+x_{s_2}+x_{p_1}+x_{p_2}= (x_{s_1},x_{s_2},x_{p_1},x_{p_2})$ between $X_{s_1}\dot+ X_{s_2}\dot+X_1 \dot+X_2$ and $X_{s_1}\times X_{s_2}\times X_1\times X_2$ which is established in Section \ref{DAEsingReduce}, the norm $\|x\|$ of $x\in X_{s_1}\times X_{s_2}\times X_1\times X_2$ is defined in the same way and coincides with the above-defined norm of the corresponding element $x\in X_{s_1}\dot+ X_{s_2}\dot+X_1 \dot+X_2$.  Similarly, in $\R\times \Rn$ or $\R\times X_{s_1}\times X_{s_2}\times X_1\times X_2$ we use the norm $\|(t,x)\|=\|t\|+\|x_{s_1}\|+\|x_{s_2}\|+ \|x_{p_1}\|+\|x_{p_2}\|$.

Consider the equation \eqref{DAEsysAE1equiv}, that is, $\Psi(t,x_{s_1},x_{s_2},x_{p_1},x_{p_2})=0$ where $\Psi$ is defined by \eqref{funcPsiSing}. Recall that this equation is equivalent to the equation \eqref{DAEsysExtAE1equiv} and to \eqref{DAEsysExtAE1}, and that $B\big|_{X_2}=\EuScript B_2\big|_{X_2}=B_2$ (see Section \ref{BlockStruct}).  The mapping \eqref{tildePsiSing} can be represented as
$$
\widetilde{\Psi}(t,x_{s_1},x_{s_2},x_{p_1},x_{p_2})= Q_2\tilde{f}(t,x_{s_1},x_{s_2},x_{p_1},x_{p_2})-B_2x_{p_2}=B_2 \Psi(t,x_{s_1},x_{s_2},x_{p_1},x_{p_2}),
$$
and we can rewrite the equation \eqref{DAEsysAE1equiv} in the form
\begin{equation}\label{DAEsysAE1equiv2}
x_{p_2}=\mathcal{N}(t,x_{s_1},x_{s_2},x_{p_1},x_{p_2}), \text{ where }
\mathcal{N}(t,x_{s_1},x_{s_2},x_{p_1},x_{p_2})\!:= x_{p_2}-\Phi_{t_*,x_*}^{-1}\widetilde{\Psi}(t,x_{s_1},x_{s_2},x_{p_1},x_{p_2}).
\end{equation}
Recall that if $(t_*,x_*)\in L_{t_+}$, then $(t_*,x_*)$ satisfies \eqref{DAEsysAE1equiv}, i.e., $\Psi(t_*,x_{s_1}^*,x_{s_2}^*,x_{p_1}^*,x_{p_2}^*)=0$, where $x_{s_1}^*+x_{s_2}^*+x_{p_1}^*+x_{p_2}^*=x_*$.

We will prove the following lemma, which will be used in what follows.
 \begin{lemma}\label{Lemma-ImplicitFunc}
For any fixed elements $t_*\in [t_+,\infty)$, $x_{s_1}^*\in X_{s_1}$, $x_{s_2}^*\in D_{s_2}$, $x_{p_1}^*\in X_1$, $x_{p_2}^*\in X_2$  for which $(t_*,x_{s_1}^*+x_{s_2}^*+x_{p_1}^*+x_{p_2}^*)\in L_{t_+}$, there exist neighborhoods $U_r(t_*,x_{s_1}^*,x_{s_2}^*,x_{p_1}^*)$, $U_\rho(x_{p_2}^*)$ and a unique function  $x_{p_2}=\mu(t,x_{s_1},x_{s_2},x_{p_1})\in C(U_r(t_*,x_{s_1}^*,x_{s_2}^*,x_{p_1}^*),U_\rho(x_{p_2}^*))$ which satisfies the equality $\mu(t_*,x_{s_1}^*,x_{s_2}^*,x_{p_1}^*)=x_{p_2}^*$ and a Lipschitz condition with respect to $(x_{s_1},x_{s_2},x_{p_1})$ on $U_r(t_*,x_{s_1}^*,x_{s_2}^*,x_{p_1}^*)$ and is a solution of the equation \eqref{DAEsysAE1equiv} with respect to $x_{p_2}$, i.e., $\Psi(t,x_{s_1},x_{s_2},x_{p_1},\mu(t,x_{s_1},x_{s_2},x_{p_1}))=0$, for all $(t,x_{s_1},x_{s_2},x_{p_1})\in U_r(t_*,x_{s_1}^*,x_{s_2}^*,x_{p_1}^*$) \,(notice that numbers $r, \rho>0$ and the function $\mu$ depend on the choice of $t_*$, $x_{s_1}^*,x_{s_2}^*,x_{p_1}^*,x_{p_2}^*$).
 \end{lemma}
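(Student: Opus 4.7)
The plan is to recognize the lemma as a Lipschitz version of the implicit function theorem, to be proved via the Banach contraction mapping principle applied to the fixed-point reformulation $x_{p_2}=\mathcal{N}(t,x_{s_1},x_{s_2},x_{p_1},x_{p_2})$ already introduced in \eqref{DAEsysAE1equiv2}, where $\mathcal{N}=x_{p_2}-\Phi_{t_*,x_*}^{-1}\widetilde{\Psi}$. Since $\Phi_{t_*,x_*}\in \mathrm{L}(X_2,Y_2)$ is invertible, this equation is equivalent to $\Psi(t,x_{s_1},x_{s_2},x_{p_1},x_{p_2})=0$, and hence to \eqref{DAEsysExtAE1}. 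The advantage of working with $\mathcal{N}$ rather than with the derivative-based implicit function theorem is twofold: it bypasses the need for a $C^1$ right-hand side (we only assume a local Lipschitz condition on $f$), and it works on neighborhoods that need not be open, which is essential because $U_{\delta_3}(x_{s_2}^*)\subseteq D_{s_2}$ may have to be taken closed.

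First I would fix the base point $(t_*,x_*)\in L_{t_+}$ with $x_{s_2}^*\in D_{s_2}$ and $x_*=x_{s_1}^*+x_{s_2}^*+x_{p_1}^*+x_{p_2}^*$, and observe that $\widetilde{\Psi}(t_*,x_{s_1}^*,x_{s_2}^*,x_{p_1}^*,x_{p_2}^*)=0$, hence $\mathcal{N}(t_*,x_{s_1}^*,x_{s_2}^*,x_{p_1}^*,x_{p_2}^*)=x_{p_2}^*$. Condition \ref{InvSing-Lipsch} together with \eqref{ContractiveConst} allows the choice of $\delta,\varepsilon>0$ so small that the constant $q:=c(\delta,\varepsilon)\,\|\Phi_{t_*,x_*}^{-1}\|$ satisfies $q<1$; on the resulting neighborhood $U_\delta(t_*,x_{s_1}^*,x_{s_2}^*,x_{p_1}^*)\times U_\varepsilon(x_{p_2}^*)$ the inequality \eqref{ContractiveMapPsi} then immediately gives the contraction estimate
\begin{equation*}
\|\mathcal{N}(t,x_{s_1},x_{s_2},x_{p_1},x_{p_2}^1)-\mathcal{N}(t,x_{s_1},x_{s_2},x_{p_1},x_{p_2}^2)\|\le q\,\|x_{p_2}^1-x_{p_2}^2\|
\end{equation*}
uniformly in $(t,x_{s_1},x_{s_2},x_{p_1})$.

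Second, I would use the local Lipschitz continuity of $f$ (hence of $\widetilde{\Psi}$, hence of $\mathcal{N}$) in the variables $(t,x_{s_1},x_{s_2},x_{p_1})$, to bound $\|\mathcal{N}(t,x_{s_1},x_{s_2},x_{p_1},x_{p_2}^*)-x_{p_2}^*\|=\|\mathcal{N}(t,x_{s_1},x_{s_2},x_{p_1},x_{p_2}^*)-\mathcal{N}(t_*,x_{s_1}^*,x_{s_2}^*,x_{p_1}^*,x_{p_2}^*)\|$ by a constant times $\|(t-t_*,x_{s_1}-x_{s_1}^*,x_{s_2}-x_{s_2}^*,x_{p_1}-x_{p_1}^*)\|$. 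Shrinking the neighborhood to $U_r(t_*,x_{s_1}^*,x_{s_2}^*,x_{p_1}^*)\subseteq U_\delta(\cdots)$ and choosing $\rho\le\varepsilon$ with $r$ small enough that this bound is at most $(1-q)\rho$, the triangle inequality
\begin{equation*}
\|\mathcal{N}(t,y,x_{p_2})-x_{p_2}^*\|\le q\,\|x_{p_2}-x_{p_2}^*\|+(1-q)\rho\le \rho
\end{equation*}
(with $y=(x_{s_1},x_{s_2},x_{p_1})$) shows that $\mathcal{N}(t,y,\cdot)$ maps the closed ball $\overline{U_\rho(x_{p_2}^*)}$ into itself. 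The Banach fixed point theorem then furnishes, for each $(t,y)\in U_r(t_*,x_{s_1}^*,x_{s_2}^*,x_{p_1}^*)$, a unique fixed point $\mu(t,y)\in \overline{U_\rho(x_{p_2}^*)}$, with $\mu(t_*,x_{s_1}^*,x_{s_2}^*,x_{p_1}^*)=x_{p_2}^*$.

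Finally, I would derive the continuity and the Lipschitz property of $\mu$ from the standard fixed-point estimate: writing $\mu(t,y)-\mu(t,y')=\mathcal{N}(t,y,\mu(t,y))-\mathcal{N}(t,y',\mu(t,y'))$ and splitting, one obtains $\|\mu(t,y)-\mu(t,y')\|\le q\,\|\mu(t,y)-\mu(t,y')\|+L\,\|y-y'\|$ where $L$ is the Lipschitz constant of $\mathcal{N}$ in $y$, giving $\|\mu(t,y)-\mu(t,y')\|\le (1-q)^{-1}L\,\|y-y'\|$; joint continuity in $(t,y)$ follows by the analogous argument adding a $t$-increment. Uniqueness is inherited from the Banach theorem on $\overline{U_\rho(x_{p_2}^*)}$. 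The main obstacle is the bookkeeping required by the fact that $D_{s_2}$ is not necessarily open and that $U_{\delta_1}(t_+)$ may be semi-open at the left endpoint when $t_*=t_+$; this is handled by invoking the contraction mapping theorem on closed/half-open parameter sets (which only needs a complete metric space for the fixed-point variable), as explicitly alluded to in the proof of Theorem \ref{Th_SingGlobSol} via \cite{Schwartz1}, rather than an implicit function theorem that requires all sets to be open.
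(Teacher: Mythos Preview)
Your approach is the same contraction-mapping argument the paper uses, and the overall structure is correct. There is one genuine slip, however: in your second step you claim ``local Lipschitz continuity of $f$ (hence of $\widetilde{\Psi}$, hence of $\mathcal{N}$) in the variables $(t,x_{s_1},x_{s_2},x_{p_1})$'' and use it to bound $\|\mathcal{N}(t,y,x_{p_2}^*)-x_{p_2}^*\|$ by a constant times the full increment including $|t-t_*|$. But the hypothesis of Theorem~\ref{Th_SingGlobSol-Lipsch} only gives a local Lipschitz condition on $f$ \emph{with respect to $x$}; in $t$ the function $f$ is merely continuous. So you cannot get a Lipschitz bound in $t$ at this stage.

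The fix is the one the paper uses: for the self-mapping step, invoke only the \emph{continuity} of $\widetilde{\Psi}$ in $\alpha=(t,x_{s_1},x_{s_2},x_{p_1})$ to make $\|\Phi_{t_*,x_*}^{-1}\|\,\|\widetilde{\Psi}(\alpha,x_{p_2}^*)\|\le(1-q)\rho$ on a sufficiently small neighborhood of $\alpha_*$; this is enough for $\mathcal{N}(\alpha,\cdot)$ to map $\overline{U_\rho(x_{p_2}^*)}$ into itself. Continuity of $\mu$ in $(t,y)$ then follows from the parametric fixed-point theorem (as in \cite{Schwartz1}) using only continuity of $\mathcal{N}$ in the parameter. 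The Lipschitz estimate on $\mu$ is needed, and available, only with respect to $(x_{s_1},x_{s_2},x_{p_1})$ at fixed $t$, which is exactly what the lemma asserts; your final splitting argument gives this once you restrict to spatial increments. With that adjustment your proof matches the paper's.
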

 \begin{proof}
It follows from condition \ref{InvSing-Lipsch} that for any fixed point $(t_*,x_{s_1}^*+x_{s_2}^*+x_{p_1}^*+x_{p_2}^*)\in L_{t_+}$ for which $x_{s_2}^*\in D_{s_2}$, there exist closed neighborhoods $\overline{U_{\tilde{\delta}}}(t_*,x_{s_1}^*,x_{s_2}^*,x_{p_1}^*)\subset U_\delta(t_*,x_{s_1}^*,x_{s_2}^*,x_{p_1}^*)$ and $\overline{U_{\tilde{\varepsilon}}}(x_{p_2}^*)\subset U_\varepsilon(x_{p_2}^*)$  such that $\mathcal{N}$ is a contractive mapping with respect to $x_{p_2}$ (uniformly in $(t,x_{s_1},x_{s_2},x_{p_1})$) on $\overline{U_{\tilde{\delta}}}(t_*,x_{s_1}^*,x_{s_2}^*,x_{p_1}^*)\times \overline{U_{\tilde{\varepsilon}}}(x_{p_2}^*)$, i.e.,
 \begin{equation}\label{ContractiveMapN}
\|\mathcal{N}(t,x_{s_1},x_{s_2},x_{p_1},x_{p_2}^1)- \mathcal{N}(t,x_{s_1},x_{s_2},x_{p_1},x_{p_2}^2)\|\le l\|x_{p_2}^1-x_{p_2}^2\|,\quad \text{where $l<1$ is a constant,}
 \end{equation}
for every $(t,x_{s_1},x_{s_2},x_{p_1})\in \overline{U_{\tilde{\delta}}}(t_*,x_{s_1}^*,x_{s_2}^*,x_{p_1}^*)$,  $x_{p_2}^i\in \overline{U_{\tilde{\varepsilon}}}(x_{p_2}^*)$, $i=1,2$.
Indeed, due to \eqref{ContractiveMapPsi} and \eqref{ContractiveConst}, there exist numbers $\tilde{\delta}\in (0,\delta)$ and $\tilde{\varepsilon}\in (0,\varepsilon)$ such that for every $\alpha=(t,x_{s_1},x_{s_2},x_{p_1})\in \overline{U_{\tilde{\delta}}}(t_*,x_{s_1}^*,x_{s_2}^*,x_{p_1}^*)$, $x_{p_2}^i\in \overline{U_{\tilde{\varepsilon}}}(x_{p_2}^*)$, $i=1,2$, the following holds:
$\|\mathcal{N}(\alpha,x_{p_2}^1)- \mathcal{N}(\alpha,x_{p_2}^2)\|= 
\|x_{p_2}^1-x_{p_2}^2-\Phi_{t_*,x_*}^{-1} [\widetilde{\Psi}(\alpha,x_{p_2}^1)-\widetilde{\Psi}(\alpha,x_{p_2}^2)\|\le \|\Phi_{t_*,x_*}^{-1}\|c(\delta_0,\varepsilon_0)\|x_{p_2}^1-x_{p_2}^2\|$, where $\|\Phi_{t_*,x_*}^{-1}\|c(\delta_0,\varepsilon_0)\le l<1$ for every $\delta_0\in (0,\tilde{\delta}]$, $\varepsilon_0\in (0,\tilde{\varepsilon}]$. This implies \eqref{ContractiveMapN}.

Choose a point $(t_*,x_*)=(t_*,x_{s_1}^*+x_{s_2}^*+x_{p_1}^*+x_{p_2}^*)$ such that $x_{s_2}^*\in D_{s_2}$ and $(t_*,x_*)\in L_{t_+}$, and fix it. As above, denote
$$
\alpha=(t,x_{s_1},x_{s_2},x_{p_1}),
$$
then $\mathcal{N}(t,x_{s_1},x_{s_2},x_{p_1},x_{p_2})=\mathcal{N}(\alpha,x_{p_2})$; also, we denote  $\alpha_*=(t_*,x_{s_1}^*,x_{s_2}^*,x_{p_1}^*)$. 
Since $\widetilde{\Psi}$ is continuous on $[t_+,\infty)\times X_{s_1}\times X_{s_2}\times X_1\times X_2$, then $\widetilde{\Psi}(\alpha,x_{p_2}^*)\to \widetilde{\Psi}(\alpha_*,x_{p_2}^*)=0$ as $\alpha\to \alpha_*$, and therefore there exists a number $\delta_*\in (0,\tilde{\delta}]$ such that $\|\Phi_{t_*,x_*}^{-1}\|\, \|\widetilde{\Psi}(\alpha,x_{p_2}^*)\|\le (1-l)\tilde{\varepsilon}$ \,(where $l$ is the constant from \eqref{ContractiveMapN}) for every $\alpha\in \overline{U_{\delta_*}}(\alpha_*)$. Hence, for each (fixed) $\alpha\in \overline{U_{\delta_*}}(\alpha_*)$ and every $x_{p_2}\in\overline{U_{\tilde{\varepsilon}}}(x_{p_2}^*)$ we have $\|\mathcal{N}(\alpha,x_{p_2})-x_{p_2}^*\|\le \|\mathcal{N}(\alpha,x_{p_2})-\mathcal{N}(\alpha,x_{p_2}^*)\|+ \|\Phi_{t_*,x_*}^{-1}\|\, \|\widetilde{\Psi}(\alpha,x_{p_2}^*)\| \le l \tilde{\varepsilon} +(1-l)\tilde{\varepsilon}=\tilde{\varepsilon}$. 
Thus, $\mathcal{N}(\alpha,x_{p_2})$ map $\overline{U_{\tilde{\varepsilon}}}(x_{p_2}^*)$ into itself for each $\alpha\in \overline{U_{\delta_*}}(\alpha_*)$.

From the foregoing it follows that, by the fixed point theorems (see, e.g., \cite[Theorems 46, $46_2$]{Schwartz1}), the mapping $\mathcal{N}(\alpha,x_{p_2})$ as a function of $x_{p_2}$, depending on the parameter $\alpha=(t,x_{s_1},x_{s_2},x_{p_1})$, has a unique fixed point $\mu_\alpha=\mu(\alpha)$ (i.e., $\mathcal{N}(\alpha,\mu(\alpha))=\mu(\alpha)$)  in $\overline{U_{\tilde{\varepsilon}}}(x_{p_2}^*)$ for each $\alpha\in \overline{U_{\delta_*}}(\alpha_*)= \overline{U_{\delta_*}}(t_*,x_{s_1}^*,x_{s_2}^*,x_{p_1}^*)$, which satisfies the equality $\mu(\alpha_*)=x_{p_2}^*$, and $\mu(\alpha)$ depends continuously on $\alpha$. The continuity of the function $\mu\colon \overline{U_{\delta_*}}(\alpha_*)\to \overline{U_{\tilde{\varepsilon}}}(x_{p_2}^*)$  is proved in the same way as in \cite[Theorem $46_2$]{Schwartz1}.

Let us prove that $\mu(\alpha)=\mu(t,x_{s_1},x_{s_2},x_{p_1})$ satisfies a Lipschitz condition with respect to $(x_{s_1},x_{s_2},x_{p_1})$ on $U_r(t_*,x_{s_1}^*,x_{s_2}^*,x_{p_1}^*)$, where $r$ is specified below. Recall that here we use the notation $\tilde{f}(t,x_{s_1},x_{s_2},x_{p_1},x_{p_2})=f(t,x)$ introduced in the proof of Theorem~\ref{Th_SingGlobSol}.
Since $f(t,x)$ satisfy locally a Lipschitz condition with respect to $x$ on $[t_+,\infty)\times \Rn$, then there exists a neighborhood $U(t_*,x_{s_1}^*,x_{s_2}^*,x_{p_1}^*,x_{p_2}^*)$ and a constant $L\ge 0$ such that
 \begin{multline}\label{LocLipsch_f}
\|\tilde{f}(t,x_{s_1}^1,x_{s_2}^1,x_{p_1}^1,x_{p_2}^1)- \tilde{f}(t,x_{s_1}^2,x_{s_2}^2,x_{p_1}^2,x_{p_2}^2)\|\le L\|(x_{s_1}^1,x_{s_2}^1,x_{p_1}^1,x_{p_2}^1)- (x_{s_1}^2,x_{s_2}^2,x_{p_1}^2,x_{p_2}^2)\|= \\
=L\big(\|x_{s_1}^1-x_{s_1}^2\|+ \|x_{s_2}^1-x_{s_2}^2\|+ \|x_{p_1}^1-x_{p_1}^2\|+\|x_{p_2}^1-x_{p_2}^2\|\big)
 \end{multline}
for any $(t,x_{s_1}^i,x_{s_2}^i,x_{p_1}^i,x_{p_2}^i)\in U(t_*,x_{s_1}^*,x_{s_2}^*,x_{p_1}^*,x_{p_2}^*)$, $i=1,2$. Choose numbers ${r\in (0,\delta_*]}$ and ${\rho\in (0,\tilde{\varepsilon}]}$ so that $U_r(t_*,x_{s_1}^*,x_{s_2}^*,x_{p_1}^*)\times U_\rho(x_{p_2}^*)\subseteq U(t_*,x_{s_1}^*,x_{s_2}^*,x_{p_1}^*,x_{p_2}^*)$ and $\mu\colon U_r(t_*,x_{s_1}^*,x_{s_2}^*,x_{p_1}^*)\to U_\rho(x_{p_2}^*)$.
Then, carrying out certain transformations and using \eqref{ContractiveMapN}, \eqref{LocLipsch_f}, we obtain that for any $(t,x_{s_1}^i,x_{s_2}^i,x_{p_1}^i)\in U_r(t_*,x_{s_1}^*,x_{s_2}^*,x_{p_1}^*)$, ${i=1,2}$, the following holds:
 \begin{multline*}
\|\mu(t,x_{s_1}^1,x_{s_2}^1,x_{p_1}^1)-\mu(t,x_{s_1}^2,x_{s_2}^2,x_{p_1}^2)\|\le
\widehat{L}\, \|(x_{s_1}^1,x_{s_2}^1,x_{p_1}^1)- (x_{s_1}^2,x_{s_2}^2,x_{p_1}^2)\|= \\
=\widehat{L} \big(\|x_{s_1}^1-x_{s_1}^2\|+ \|x_{s_2}^1-x_{s_2}^2\|+ \|x_{p_1}^1-x_{p_1}^2\|\big),\qquad \widehat{L}=L\|\Phi_{t_*,x_*}^{-1}\|\,\|Q_2\|/(1-l)\ge 0.
 \end{multline*}

Thus, since the equations \eqref{DAEsysAE1equiv} and \eqref{DAEsysAE1equiv2} are equivalent, the lemma is proved.
 \end{proof}

Due to condition \ref{SoglSing} of Theorem \ref{Th_SingGlobSol}, for any fixed $(t_*,x_{s_1}^*,x_{s_2}^*,x_{p_1}^*)\in [t_+,\infty)\times X_{s_1}\times D_{s_2}\times X_1$ there exists a unique $x_{p_2}^*\in X_2$ such that $(t_*,x_{s_1}^*+x_{s_2}^*+x_{p_1}^*+x_{p_2}^*)\in L_{t_+}$. Further, it follows from Lemma \ref{Lemma-ImplicitFunc} that in some neighborhood of each (fixed) $(t_*,x_{s_1}^*,x_{s_2}^*,x_{p_1}^*)\in [t_+,\infty)\times X_{s_1}\times D_{s_2}\times X_1$ there exists a unique solution $x_{p_2}=\mu_{t_*,x_{s_1}^*,x_{s_2}^*,x_{p_1}^*}(t,x_{s_1},x_{s_2},x_{p_1})$ of the equation \eqref{DAEsysAE1equiv}, and this solution is continuous in $(t,x_{s_1},x_{s_2},x_{p_1})$, satisfies a Lipschitz condition with respect to $(x_{s_1},x_{s_2},x_{p_1})$ and the equality  $\mu_{t_*,x_{s_1}^*,x_{s_2}^*,x_{p_1}^*}(t_*,x_{s_1}^*,x_{s_2}^*,x_{p_1}^*)=x_{p_2}^* \in D_{p_2}$,  where the set $D_{p_2}\subset  X_2$ is such that $[t_+,\infty)\times (X_{s_1}\dot+ D_{s_2}\dot+ X_1\dot+ D_{p_2})\subset L_{t_+}$. As in the proof of Theorem~\ref{Th_SingGlobSol}, we introduce the function
$$
\eta\colon [t_+,\infty)\times X_{s_1}\times D_{s_2}\times X_1\to D_{p_2}
$$
and define  $\eta(t,x_{s_1},x_{s_2},x_{p_1})= \mu_{t_*,x_{s_1}^*,x_{s_2}^*,x_{p_1}^*}(t,x_{s_1},x_{s_2},x_{p_1})$ at \,$(t,x_{s_1},x_{s_2},x_{p_1})=(t_*,x_{s_1}^*,x_{s_2}^*,x_{p_1}^*)$ for each $(t_*,x_{s_1}^*,x_{s_2}^*,x_{p_1}^*)\in [t_+,\infty)\times X_{s_1}\times D_{s_2}\times X_1$. Then the function $x_{p_2}=\eta(t,x_{s_1},x_{s_2},x_{p_1})$ is continuous in $(t,x_{s_1},x_{s_2},x_{p_1})$, satisfy locally a Lipschitz condition with respect to $(x_{s_1},x_{s_2},x_{p_1})$ on $[t_+,\infty)\times X_{s_1}\times D_{s_2}\times X_1$ and is a unique solution of the equation \eqref{DAEsysAE1equiv}, i.e., $\Psi(t,x_{s_1},x_{s_2},x_{p_1},\eta(t,x_{s_1},x_{s_2},x_{p_1}))=0$ for all $(t,x_{s_1},x_{s_2},x_{p_1})\in [t_+,\infty)\times X_{s_1}\times D_{s_2}\times X_1$.
The uniqueness of $\eta$ is proved in the same way as the uniqueness of the function $\eta$ in the proof of Theorem~\ref{Th_SingGlobSol}. Note that the introduced function $\eta$ is also a unique solution of the equation \eqref{DAEsysExtAE1} with respect to $x_{p_2}$ since it is equivalent to \eqref{DAEsysAE1equiv}.

Choose any initial point $(t_0,x_0)\in L_{t_+}$, where $S_2x_0\in D_{s_2}$, and any function $\phi_{s_2}\in C([t_0,\infty),D_{s_2})$ satisfying the condition $\phi_{s_2}(t_0)=S_2 x_0$. We substitute the function $x_{s_2}=\phi_{s_2}(t)$ into $\eta$, denote $q(t,x_{s_1},x_{p_1})=\eta(t,x_{s_1},\phi_{s_2}(t),x_{p_1})$, and then we substitute the functions $x_{s_2}=\phi_{s_2}(t)$ and $x_{p_2}=q(t,x_{s_1},x_{p_1})$ in \eqref{DAEsysExtDE1}, \eqref{DAEsysExtDE2}.
The obtained system we write in the form \eqref{DAEsysExtDEeta}. Due to the properties of $\tilde{f}(t,x_{s_1},x_{s_2},x_{p_1},x_{p_2})$, $\eta(t,x_{s_1},x_{s_2},x_{p_1})$ (and, accordingly, $q(t,x_{s_1},x_{p_1})$) and $\phi_{s_2}(t)$, the function $\widetilde{\Upsilon}(t,\omega)$ is continuous in $(t,\omega)$ and satisfy locally a Lipschitz condition with respect to $\omega$ on $[t_0,\infty)\times X_{s_1}\times X_1$. Consequently, there exists a unique solution $\omega=\omega(t)$ of \eqref{DAEsysExtDEeta} on some interval $[t_0,\beta)$ which satisfies the initial condition \eqref{DEeta_ini}.

The subsequent proof coincides with the proof of Theorem~\ref{Th_SingGlobSol} (see the part of the proof after~\eqref{DEeta_ini}).
 \end{proof}

 \section{Lagrange stability of singular semilinear DAEs}\label{SectLagrSt}

The equation \eqref{DAE} is called \emph{Lagrange stable for the initial point $(t_0,x_0)$} if the solution of the IVP \eqref{DAE}, \eqref{ini} is Lagrange stable (see the definition in Section \ref{Preliminaries}) for this initial point.

The \emph{equation \eqref{DAE}} is called \emph{Lagrange stable} if each solution of the IVP \eqref{DAE}, \eqref{ini} is Lagrange stable (i.e., the equation is Lagrange stable for each consistent initial point).

 \begin{theorem}\label{Th_SingUstLagr} 
Let $f\in C([t_+,\infty)\times \Rn,\Rm)$ and $\lambda A+B$ be a singular pencil of operators such that its regular block $\lambda A_r+B_r$ from \eqref{penc} has the index not higher than 1. Assume that condition \ref{SoglSing} of Theorem \ref{Th_SingGlobSol} holds and condition \ref{InvSing} of Theorem \ref{Th_SingGlobSol} or condition \ref{InvSing-Lipsch} of Theorem \ref{Th_SingGlobSol-Lipsch} holds. Let the following conditions be satisfied:
\begin{enumerate}
\addtocounter{enumi}{2}
\item\label{LagrSing} There exists a number $R>0$, a positive definite function  $V\in C^1([t_+,\infty)\times D_{s_1}\times D_{p_1},\R)$, where a set $D_{s_1}\times D_{p_1}\subset X_{s_1}\times X_1$ is such that $D_{s_1}\times D_{p_1}\supset \{\|(x_{s_1},x_{p_1})\|\ge R\}$, and a function $\chi\in C([t_+,\infty)\times (0,\infty),\R)$ such that:
  \begin{enumerate}[label={\upshape(\alph*)},ref={\upshape(\alph*)},topsep=1pt]
  \item\label{LagrSing1} ${V(t,x_{s_1},x_{p_1})\to\infty}$  uniformly in $t$ on $[t_+,\infty)$ as ${\|(x_{s_1},x_{p_1})\|\to\infty}$;

  \item\label{LagrSing2} for all $(t,x_{s_1}+x_{s_2}+x_{p_1}+x_{p_2})\in L_{t_+}$, for which $x_{s_2}\in D_{s_2}$ and ${\|(x_{s_1},x_{p_1})\|\ge R}$, the inequality \eqref{LagrDAEsing} is satisfied (i.e., condition \ref{ExtensSing2} of Theorem~\ref{Th_SingGlobSol} holds);

  \item\label{UstLagr} the differential inequality \eqref{L1v}, i.e., $dv/dt\le \chi(t,v)$  \,($t\in [t_+,\infty)$), does not have unbounded positive solutions for $t\in [t_+,\infty)$.
  \end{enumerate}
\end{enumerate}
Then for each initial point ${(t_0,x_0)\in L_{t_+}}$, where $S_2x_0\in D_{s_2}$, the initial value problem \eqref{DAE}, \eqref{ini} has a unique global solution $x(t)$ for which the choice of the function $\phi_{s_2}\in C([t_0,\infty),D_{s_2})$  with the initial value $\phi_{s_2}(t_0)=S_2 x_0$ uniquely defines the component  $S_2x(t)=\phi_{s_2}(t)$ when $\rank(\lambda A+B)<n$.

Let, in addition to the above conditions, the following conditions also hold:
 \begin{enumerate}
\addtocounter{enumi}{4}   
 \item\label{LagrAs} For all $(t,x_{s_1}+x_{s_2}+x_{p_1}+x_{p_2})\in L_{t_+}$, for which $x_{s_2}\in D_{s_2}$ and $\|x_{s_1}+x_{s_2}+x_{p_1}\|\le M<\infty$\, ($M$ is an arbitrary constant), the inequality
     $$
     \|x_{p_2}\|\le K_M<\infty
     $$
     or the inequality
     $$
     \|Q_2 f(t,x_{s_1}+x_{s_2}+x_{p_1}+x_{p_2})\|\le K_M<\infty,
     $$
     where $K_M=K(M)$ is some constant, is satisfied.

  \item\label{LagrBs} $\|F_2 f(t,x)\|<+\infty$ for all $(t,x)\in L_{t_+}$ such that $S_2x\in D_{s_2}$ and $\|x\|\le C<\infty$ \,($C$ is an arbitrary constant).
 \end{enumerate}  
Then, for the initial points $(t_0,x_0)\in L_{t_+}$ where $S_2x_0\in D_{s_2}$ and any function $\phi_{s_2}\in C([t_0,\infty),D_{s_2})$ satisfying the relations $\phi_{s_2}(t_0)=S_2 x_0$ and $\sup\limits_{t\in [t_0,\infty)}\|\phi_{s_2}(t)\|<+\infty$, the equation \eqref{DAE}, where ${S_2x=\phi_{s_2}(t)}$, is Lagrange stable; when $\rank(\lambda A+B)=n<m$, the component $S_2 x$ is absent.
 \end{theorem}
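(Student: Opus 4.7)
The plan is to treat the two claims of the theorem separately. The first conclusion (global existence) is a direct corollary of Theorem~\ref{Th_SingGlobSol} (or of Theorem~\ref{Th_SingGlobSol-Lipsch}): condition~\ref{UstLagr} (``no unbounded positive solutions of $dv/dt\le\chi(t,v)$ on $[t_+,\infty)$'') is strictly stronger than condition~\ref{GlobSolv} (``no positive solutions with finite escape time''), since a finite-escape-time solution is in particular unbounded; likewise \ref{LagrSing1} is a uniform-in-$t$ strengthening of \ref{ExtensSing1}. Hence all hypotheses of Theorem~\ref{Th_SingGlobSol} are satisfied, producing the unique global solution
\[
 x(t)=\omega_{s_1}(t)+\phi_{s_2}(t)+\omega_{p_1}(t)+q\bigl(t,\omega_{s_1}(t),\omega_{p_1}(t)\bigr),
\]
where $\omega(t)=(\omega_{s_1}(t),\omega_{p_1}(t))$ solves the reduced IVP \eqref{DAEsysExtDEeta}, \eqref{DEeta_ini} and $q$ is the implicit function furnished in the proofs of Theorems~\ref{Th_SingGlobSol}--\ref{Th_SingGlobSol-Lipsch}.

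For Lagrange stability, I would bound $\|x(t)\|$ component by component. The piece $\phi_{s_2}(t)$ is bounded by hypothesis. The heart of the argument is to establish $\sup_{t\ge t_0}\|\omega(t)\|<\infty$. Setting $v(t):=V(t,\omega_{s_1}(t),\omega_{p_1}(t))$, condition~\ref{LagrSing2} together with the formula \eqref{L1dVdtSing} yields $v'(t)\le\chi(t,v(t))$ whenever $\|\omega(t)\|\ge R$. I would then carry out a comparison of LaSalle--Lefschetz type \cite{LaSal-Lef}: on every maximal sub-interval $[\sigma,\tau]\subset[t_0,\infty)$ on which $\|\omega\|\ge R$, $v$ is dominated by the maximal positive solution of $dw/dt=\chi(t,w)$ starting from $w(\sigma)=v(\sigma)$, and by condition~\ref{UstLagr} every such dominating solution is bounded on $[t_+,\infty)$. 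Hence $v$ stays bounded on $[t_0,\infty)$, and the uniform radial unboundedness of $V$ in condition~\ref{LagrSing1} then forces $\omega(t)$ to be bounded as well.

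Once $(\omega_{s_1},\phi_{s_2},\omega_{p_1})$ is bounded along the trajectory, the triple lies in a bounded subset of $X_{s_1}\times D_{s_2}\times X_1$, so condition~\ref{LagrAs} delivers either the direct bound $\|x_{p_2}(t)\|\le K$ or the bound $\|Q_2 f(t,x(t))\|\le K$; in the latter case I would invoke $x_{p_2}(t)=\Es B_2^{(-1)}Q_2 f(t,x(t))$ from \eqref{DAEsysExtAE1}. Condition~\ref{LagrBs}, combined with the algebraic constraint $\Es B_{ov}\,x_{s_1}=F_2 f(t,x)$ (which is \eqref{DAEsysExtAE2} on $L_{t_+}$), is consistent with the obtained bound on $x_{s_1}$. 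Summing the four component bounds yields $\sup_{t\ge t_0}\|x(t)\|<\infty$, i.e.\ Lagrange stability for the initial point $(t_0,x_0)$ and the chosen $\phi_{s_2}$; the case $\rank(\lambda A+B)=n<m$ (so $X_{s_2}=\{0\}$ and the $\phi_{s_2}$-component is absent) is handled identically.

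The main obstacle I anticipate is the comparison step. Since $\omega(t)$ may repeatedly enter and exit $\{\|\omega\|<R\}$, the differential inequality for $v$ is only available outside this ball, and uniform control of $v$ at the re-entry times is required; concretely, one needs $\sup\{V(t,\omega):t\ge t_+,\,\|\omega\|=R\}<\infty$. This bound does not follow from continuity of $V$ alone, but it is precisely what the uniform-in-$t$ strengthening in condition~\ref{LagrSing1} is designed to deliver (otherwise the entry values of $v$ could tend to infinity and the comparison would collapse). Once this uniform sphere bound is in hand, the remainder of the argument is essentially a verbatim adaptation of LaSalle--Lefschetz, and the four-component synthesis at the end is routine.
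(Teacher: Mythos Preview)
Your approach is correct and coincides with the paper's: reduce the first claim to Theorem~\ref{Th_SingGlobSol} (resp.\ Theorem~\ref{Th_SingGlobSol-Lipsch}) via the observation that conditions~\ref{LagrSing1} and~\ref{UstLagr} strengthen conditions~\ref{ExtensSing1} and~\ref{GlobSolv} (this is exactly Remark~\ref{RemUstLagrGlobSolv}), obtain $\sup_{t\ge t_0}\|\omega(t)\|<\infty$ for the reduced ODE \eqref{DAEsysExtDEeta}, and then bound $x_{p_2}(t)$ through the identity $u(t)=\Es B_2^{(-1)}Q_2 f(t,x(t))$ together with condition~\ref{LagrAs}. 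The paper does not unpack the boundedness step for $\omega$ at all; it simply invokes \cite[Chapter~IV, Theorem~XV]{LaSal-Lef}, and you may do the same.

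One correction to your final paragraph: condition~\ref{LagrSing1} is uniform radial \emph{unboundedness} of $V$, i.e.\ a uniform-in-$t$ \emph{lower} bound on $V$ for large $\|\omega\|$. Its role in the LaSalle--Lefschetz argument is the passage ``$v(t)$ bounded $\Rightarrow$ $\omega(t)$ bounded'' (equivalently, $\|\omega(t_n)\|\to\infty\Rightarrow v(t_n)\to\infty$), not a uniform upper bound for $V$ on the sphere $\|\omega\|=R$. So the re-entry issue you flag is genuine, but \ref{LagrSing1} is not what resolves it; since the paper treats this as a black box, citing \cite{LaSal-Lef} directly is the cleanest route.
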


 \begin{remark}\label{RemUstLagrGlobSolv}
If condition~\ref{LagrSing} of Theorem~\ref{Th_SingUstLagr} holds, then condition~\ref{ExtensSing} of Theorem~\ref{Th_SingGlobSol} holds.  This is easily verified since conditions \ref{LagrSing1} and \ref{UstLagr} of Theorem~\ref{Th_SingUstLagr} imply conditions  \ref{ExtensSing1} and \ref{GlobSolv}  of Theorem~\ref{Th_SingGlobSol}, respectively, and conditions \ref{LagrSing2} of Theorem~\ref{Th_SingUstLagr} and \ref{ExtensSing2} of Theorem~\ref{Th_SingGlobSol} coincide. Note that condition~\ref{LagrSing} of Theorem~\ref{Th_SingGlobSol} must also hold for Theorem \ref{Th_SingGlobSol-Lipsch}.
 \end{remark}

 \begin{proof}
We will carry out the proof, assuming that condition \ref{InvSing} of Theorem \ref{Th_SingGlobSol} holds. If we replace it by condition \ref{InvSing-Lipsch} of Theorem \ref{Th_SingGlobSol-Lipsch}, then in the proof of the present theorem it will be necessary to replace ``Theorem \ref{Th_SingGlobSol}'' by  ``Theorem \ref{Th_SingGlobSol-Lipsch}''.

Considering Remark \ref{RemUstLagrGlobSolv}, we conclude that all conditions of Theorem \ref{Th_SingGlobSol} hold. Consequently, for an arbitrary initial point $(t_0,x_0)\in L_{t_+}$, where $S_2x_0\in D_{s_2}$, there exists a unique solution $x(t)$ of the IVP \eqref{DAE}, \eqref{ini} on $[t_0,\infty)$, such that $S_2x(t)=\phi_{s_2}(t)$ where $\phi_{s_2}\in C([t_0,\infty),D_{s_2})$ is some chosen function with the initial value $\phi_{s_2}(t_0)=S_2 x_0$. Thus, the existence of a global solution of the IVP \eqref{DAE}, \eqref{ini} is proved.

Let us prove the Lagrange stability. As shown in the proof of Theorem  \ref{Th_SingGlobSol}, the solution of the IVP  \eqref{DAE}, \eqref{ini} can be represented in the form
$$
x(t)=\omega_{s_1}(t)+\phi_{s_2}(t)+\omega_{p_1}(t)+ q(t,\omega_{s_1}(t),\omega_{p_1}(t)).
$$
It is assumed that the function $\phi_{s_2}(t)$ defining the component $S_2x(t)$ of the solution $x(t)$ was chosen so that $\sup\limits_{t\in [t_0,\infty)}\|\phi_{s_2}(t)\|<+\infty$.  This is fulfilled due to the requirements of the present theorem and, obviously, does not affect the proof of Theorem \ref{Th_SingGlobSol}. It follows from condition \ref{LagrSing} that the derivative of the function $V$ along the trajectories of the equation \eqref{DAEsysExtDEeta} satisfies the inequality \eqref{L1dVdtSing} for all $t\ge t_0$, $\|\omega\|\ge R$, and the differential inequality \eqref{L1v} does not have unbounded positive solutions for $t\in [t_+,\infty)$.  Then by \cite[Chapter~IV, Theorem XV]{LaSal-Lef} the equation \eqref{DAEsysExtDEeta} is Lagrange stable. Consequently, $\sup\limits_{t\in[t_0,\infty)}\|\omega(t)\|<+\infty$. Hence there exists a constant $M>0$ such that
\begin{equation}\label{LAsing}
\|\omega_{s_1}(t)+\phi_{s_2}(t)+\omega_{p_1}(t)\|\le M,\qquad  t\in [t_0,\infty).
\end{equation}

Denote
$$
u(t):= q(t,\omega_{s_1}(t),\omega_{p_1}(t))
$$
and recall that the function $q(t,\omega_{s_1}(t),\omega_{p_1}(t))= \eta(t,\omega_{s_1}(t),\phi_{s_2}(t),\omega_{p_1}(t))$ is a solution of the equation \eqref{DAEsysExtAE1} (as well as the equivalent equation  \eqref{DAEsysAE1equiv}) with respect to the variable $x_{p_2}$. Therefore,
\begin{equation}\label{ExtAE1equiv2}
u(t)=\Es B_2^{(-1)}Q_2 f(t,\omega_{s_1}(t)+\phi_{s_2}(t)+\omega_{p_1}(t)+u(t)).
\end{equation}
Then from \eqref{LAsing}, condition \ref{LagrAs} and the boundedness of the norm of the operator $\Es B_2^{(-1)}\in \mathrm{L}(\Rm,\Rn)$ it follows that there exists a constant $K_{M}=K(M)$ (depending on the constant $M$, in general) such that for all $t\in [t_0,\infty)$ the following estimate holds:
\begin{equation}\label{Bound}
\|u(t)\|\le K_{M}.
\end{equation}

It follows from the above that $\|x(t)\|\le M+K_{M}<\infty$ for all $t\in [t_0,\infty)$, i.e., the solution $x(t)$ is bounded on $[t_0,\infty)$ and, therefore, is Lagrange stable. Condition \ref{LagrBs} ensures the correctness of the equality \eqref{DAEsysExtAE2}, which is equivalent to the equality $F_2Bx(t)=F_2f(t,x(t))$.  Thus, the Lagrange stability of the DAE \eqref{DAE} for the initial points  $(t_0,x_0)\in L_{t_+}$, where $S_2x_0\in D_{s_2}$, and an arbitrary function $\phi_{s_2}\in C([t_0,\infty),D_{s_2})$ satisfying the relations $\phi_{s_2}(t_0)=S_2 x_0$ and $\sup\limits_{t\in [t_0,\infty)}\|\phi_{s_2}(t)\|<+\infty$ is proved.
 \end{proof}

 \section{Lagrange instability of singular semilinear DAEs (the blow-up of solutions in finite time)}\label{SectLagrUnst}

The equation \eqref{DAE} is called \emph{Lagrange unstable for the initial point $(t_0,x_0)$} if the solution of the IVP \eqref{DAE}, \eqref{ini} is Lagrange unstable (see the definition in Section \ref{Preliminaries}) for this initial point.

The \emph{equation \eqref{DAE}} is called \emph{Lagrange unstable} if each solution of the IVP \eqref{DAE}, \eqref{ini} is Lagrange unstable (i.e., the equation is Lagrange unstable for each consistent initial point).

 \begin{theorem}\label{Th_singNeLagr}
Let $f\in C([t_+,\infty)\times \Rn,\Rm)$ and $\lambda A+B$ be a singular pencil of operators such that its regular block $\lambda A_r+B_r$ from \eqref{penc} has the index not higher than 1. Assume that condition \ref{SoglSing} of Theorem \ref{Th_SingGlobSol} holds and condition \ref{InvSing} of Theorem \ref{Th_SingGlobSol} or condition \ref{InvSing-Lipsch} of Theorem \ref{Th_SingGlobSol-Lipsch} holds. Let the following conditions be satisfied:
\begin{enumerate}
\addtocounter{enumi}{2} 
\item\label{ExistOmegaSing} There exists a region  $\Omega \subset  X_{s_1}\times X_1$ such that $0\not\in\Omega$ and the component $(S_1+P_1)x(t)$ of each existing solution $x(t)$ with the initial point  $(t_0,x_0)\in L_{t_+}$, where $(S_1x_0,P_1x_0)\in\Omega$ and $S_2x_0\in D_{s_2}$, remains all the time in $\Omega$.

\item\label{NeUstLagrSing} There exists a positive definite function $V\in C^1([t_+,\infty)\times \Omega,\R)$ and a function $\chi\in C([t_+,\infty)\times (0,\infty),\R)$ such that: 
    \begin{enumerate}[label={\upshape(\alph*)},ref={\upshape(\alph*)},topsep=1pt]
   \item for all $(t,x_{s_1}+x_{s_2}+x_{p_1}+x_{p_2})\in L_{t_+}$, for which  $x_{s_2}\in D_{s_2}$ and $(x_{s_1},x_{p_1})\in\Omega$, the inequality
   \begin{equation}\label{NeLagrDAESing}
    V'_{\eqref{DAEsysExtDE1},\eqref{DAEsysExtDE2}}(t,x_{s_1},x_{p_1})\ge \chi\big(t,V(t,x_{s_1},x_{p_1})\big),
   \end{equation}
   where $V'_{\eqref{DAEsysExtDE1},\eqref{DAEsysExtDE2}}(t,x_{s_1},x_{p_1})$ has the form \eqref{dVDAEsing}, is satisfied;

   \item\label{NeustLagr} the differential inequality  \eqref{L2v}, i.e., $dv/dt\ge \chi(t,v)$  \,($t\in [t_+,\infty)$), does not have global positive solutions.
   \end{enumerate}
\end{enumerate}
Then for each initial point $(t_0,x_0)\in L_{t_+}$, where $S_2x_0\in D_{s_2}$ and $(S_1x_0,P_1x_0)\in\Omega$, the initial value problem \eqref{DAE}, \eqref{ini} has a unique solution $x(t)$ for which the choice of the function  $\phi_{s_2}\in C([t_0,\infty),D_{s_2})$  with the initial value $\phi_{s_2}(t_0)=S_2 x_0$ uniquely defines the component $S_2x(t)=\phi_{s_2}(t)$  when $\rank(\lambda A+B)<n$ (when $\rank(\lambda A+B)=n<m$, the component $S_2 x$ is absent), and this solution is Lagrange unstable (has a finite escape time).
 \end{theorem}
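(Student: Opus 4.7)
The plan is to follow closely the architecture of Theorem \ref{Th_SingGlobSol} (or Theorem \ref{Th_SingGlobSol-Lipsch}) for local existence and reduction, and then to replace the usual LaSalle--Lefschetz continuation argument with its reversed-inequality counterpart based on condition \ref{NeustLagr}. First, I would observe that conditions \ref{SoglSing} and \ref{InvSing} (respectively \ref{InvSing-Lipsch}) are exactly those invoked in the global-solvability proofs, so the same reduction applies verbatim: the algebraic equation \eqref{DAEsysExtAE1} admits a unique implicit solution $x_{p_2}=\eta(t,x_{s_1},x_{s_2},x_{p_1})$ defined on the whole of $[t_+,\infty)\times X_{s_1}\times D_{s_2}\times X_1$, continuous and either locally Lipschitz or continuously differentiable in the spatial variables. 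Fixing any initial point $(t_0,x_0)\in L_{t_+}$ with $S_2x_0\in D_{s_2}$ and $(S_1x_0,P_1x_0)\in\Omega$, and any $\phi_{s_2}\in C([t_0,\infty),D_{s_2})$ with $\phi_{s_2}(t_0)=S_2x_0$, I set $q(t,x_{s_1},x_{p_1}):=\eta(t,x_{s_1},\phi_{s_2}(t),x_{p_1})$ and substitute into \eqref{DAEsysExtDE1}, \eqref{DAEsysExtDE2} to obtain the reduced ODE \eqref{DAEsysExtDEeta} on $X_{s_1}\times X_1$, whose right-hand side $\widetilde{\Upsilon}$ is continuous and sufficiently regular for uniqueness; standard local theory then provides a unique maximal solution $\omega(t)=(x_{s_1}(t),x_{p_1}(t))$ on some interval $[t_0,\tau)$, and $x(t)=x_{s_1}(t)+\phi_{s_2}(t)+x_{p_1}(t)+q(t,\omega(t))$ is the corresponding solution of the IVP \eqref{DAE}, \eqref{ini}.

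Next, I would invoke condition \ref{ExistOmegaSing} to keep $\omega(t)\in\Omega$ for all $t\in[t_0,\tau)$. Since $0\notin\Omega$ and $V$ is positive definite, the scalar function $v(t):=V(t,\omega(t))$ is a strictly positive $C^1$ function on $[t_0,\tau)$. Differentiating along the trajectory and applying \eqref{NeLagrDAESing} yields
\begin{equation*}
\dfrac{dv}{dt}=V'_{\eqref{DAEsysExtDE1},\eqref{DAEsysExtDE2}}(t,x_{s_1}(t),x_{p_1}(t))\ge\chi(t,v(t)),
\end{equation*}
so $v$ is a positive solution of the differential inequality \eqref{L2v} on $[t_0,\tau)$.

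I would then derive the finite escape time by contradiction. If $\tau=\infty$, then $v$ is a global positive solution of \eqref{L2v}, which contradicts condition \ref{NeustLagr}; the non-existence of global positive solutions transfers immediately from $[t_+,\infty)$ to any subinterval $[t_0,\infty)$ with $t_0\ge t_+$ via the integral criterion stated in Section \ref{LaSDiffIneq}. Hence $\tau<\infty$. Since $\widetilde{\Upsilon}$ is defined and continuous on the entire cylinder $[t_0,\infty)\times X_{s_1}\times X_1$, the standard ODE continuation principle forces $\lim_{t\to\tau-0}\|\omega(t)\|=\infty$. Using the maximal norm $\|x\|=\|x_{s_1}\|+\|x_{s_2}\|+\|x_{p_1}\|+\|x_{p_2}\|$ on $\Rn=X_{s_1}\dot+X_{s_2}\dot+X_1\dot+X_2$ from the proof of Theorem \ref{Th_SingGlobSol-Lipsch}, the estimate $\|x(t)\|\ge\|\omega(t)\|$ gives $\|x(t)\|\to\infty$ as $t\to\tau-0$, so $x(t)$ has a finite escape time and the solution is Lagrange unstable.

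The most delicate point I anticipate is ensuring that the reduced right-hand side $\widetilde{\Upsilon}$ is globally defined on the full cylinder $[t_0,\infty)\times X_{s_1}\times X_1$ (and not merely in a neighbourhood of the initial point), since only then does the ODE continuation principle produce unboundedness in the $\omega$-coordinates rather than mere escape from a local domain. This is precisely the content of the global implicit-function construction already performed in the proofs of Theorems \ref{Th_SingGlobSol} and \ref{Th_SingGlobSol-Lipsch}, which glues the local branches of $\eta$ coming from the implicit function or contraction-mapping theorem into a single well-defined function by using condition \ref{SoglSing}. Once this is in hand, the contradiction with \ref{NeustLagr} and the passage from $\|\omega(t)\|\to\infty$ to $\|x(t)\|\to\infty$ are routine.
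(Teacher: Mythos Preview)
Your proposal is correct and follows essentially the same approach as the paper: reduce to the ODE \eqref{DAEsysExtDEeta} via the implicit-function construction from Theorems \ref{Th_SingGlobSol}/\ref{Th_SingGlobSol-Lipsch}, use condition \ref{ExistOmegaSing} to trap $\omega(t)$ in $\Omega$, and apply the reversed differential inequality from \ref{NeUstLagrSing} to force blow-up. The only difference is cosmetic: the paper invokes \cite[Chapter~IV, Theorem~XIV]{LaSal-Lef} as a black box for the finite escape time of $\omega(t)$, whereas you unpack that theorem into the contradiction ``$\tau=\infty$ would give a global positive solution of \eqref{L2v}'' together with the ODE continuation principle on the full cylinder; both routes are equivalent, and your explicit version makes clear why the global definition of $\widetilde\Upsilon$ (secured by condition \ref{SoglSing}) is needed.
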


 \begin{proof}
It is proved in the same way as in Theorem~\ref{Th_SingGlobSol} (or Theorem \ref{Th_SingGlobSol-Lipsch}) that there exists a unique solution $\omega(t)$ of the equation \eqref{DAEsysExtDEeta} on some interval $[t_0,\beta)$ which satisfies the initial condition  \eqref{DEeta_ini}.  Further, it follows from the proof of  Theorem~\ref{Th_SingGlobSol} that there exists a unique solution $x(t)=\omega_{s_1}(t)+ \phi_{s_2}(t)+ \omega_{p_1}(t)+ q(t,\omega_{s_1}(t),\omega_{p_1}(t))$ of the IVP \eqref{DAE}, \eqref{ini} on $[t_0,\beta)$.

According to condition \ref{ExistOmegaSing}, there exists a region $\Omega\subset  X_{s_1}\times X_1$ such that $0\not\in\Omega$ and the component $(S_1+P_1)x(t)$ of each existing solution $x(t)$ with the initial point $(t_0,x_0)\in L_{t_+}$, for which $(S_1x_0,P_1x_0)\in\Omega$ and $S_2x_0\in D_{s_2}$, remains all the time in $\Omega$. Further, it will be assumed that the initial point $(t_0,x_0)$ for the solution mentioned above has been chosen so that condition \ref{ExistOmegaSing} is satisfied.  Then the initial value $\omega_0=(x_{s_1,0}^\T,x_{p_1,0}^\T)^\T$ from the condition \eqref{DEeta_ini} belongs to $\Omega$ and, therefore, the solution $\omega(t)$ of the equation \eqref{DAEsysExtDEeta} remains all the time in  $\Omega$. In addition, $\omega=0 \not\in\Omega$.

By virtue of condition \ref{NeUstLagrSing}, for all $t\ge t_0$, $\omega(t)\in \Omega$ the inequality
 \begin{equation*}
V'_{\eqref{DAEsysExtDEeta}}(t,\omega)=\dfrac{\partial V}{\partial t}(t,\omega)+\dfrac{\partial V}{\partial \omega}(t,\omega)\cdot \widetilde{\Upsilon}(t,\omega)\ge \chi\big(t,V(t,\omega)\big)
 \end{equation*}
holds, and the inequality  \eqref{L2v}, $t\ge t_0$, does not have global positive solutions. Hence, using the theorem \cite[Chapter~IV, Theorem~XIV]{LaSal-Lef}, we obtain that the solution $\omega(t)$ has a finite escape time, i.e., it is defined on some finite interval $[t_0,T)$ and $\lim\limits_{t\to T-0}\|\omega(t)\|= +\infty$.  Consequently, the solution $x(t)=\omega_{s_1}(t)+\phi_{s_2}(t)+\omega_{p_1}(t)+ q(t,\omega_{s_1}(t),\omega_{p_1}(t))$ also has a finite escape time and, accordingly, is Lagrange unstable.

The uniqueness of the solution $x(t)$ can be proved as in the proof of Theorem \ref{Th_SingGlobSol}.
 \end{proof}

The statement of Theorem~\ref{Th_singNeLagr} means that the equation \eqref{DAE} is Lagrange unstable for the initial points $(t_0,x_0)\in L_{t_+}$, for which $S_2x_0\in D_{s_2}$ and $(S_1x_0,P_1x_0)\in\Omega$.

 \section{Dissipativity (ultimate boundedness) of singular semilinear DAEs}\label{SectDissipat}

Solutions of an equation are called \emph{ultimately bounded} if there exists a constant $C>0$ (not depending on the choice of initial values) and for each solution $x(t)$ with initial values $t_0$, $x_0$ there exists a number $\tau=\tau(t_0,x_0)\ge t_0$ such that $\|x(t)\|<C$ for all $t\in [t_0+\tau,\infty)$. If at the same time the number $\tau$ does not depend on the choice of $t_0$ (i.e., $\tau=\tau(x_0)$), then the solutions are called \emph{uniformly ultimately bounded}.

The DAE \eqref{DAE} is called \emph{ultimately bounded} (\emph{uniformly ultimately bounded}, respectively) or \emph{dissipative} (\emph{uniformly dissipative}, respectively) if for any consistent initial point $(t_0,x_0)$ there exists a global solution of the initial value problem \eqref{DAE}, \eqref{ini} and all solutions are ultimately bounded (uniformly ultimately bounded, respectively).

 \smallskip
An operator function $H\colon J\to \mathrm{L}(X)$, where $X$ is a finite-dimensional linear or Hilbert space and $J\subseteq \R$ is an interval, is called \emph{self-adjoint} if the operator $H(t)$ is self-adjoint (i.e., self-adjoint for every $t\in J$).

A self-adjoint operator $H(t)\in \mathrm{L}(X)$, where $t\in J$, is called \emph{positive} if $(H(t)x,x)>0$ for all  $t\in J$ and $x\ne 0$, and  \emph{positive definite} or \emph{uniformly positive} if there exists a constant $c>0$ such that $(H(t)x,x)\ge c\|x\|^2$ for all $t$ and $x$.
A self-adjoint operator function $H\colon J\to \mathrm{L}(X)$ is called \emph{positive} (\emph{positive definite} or \emph{uniformly positive}, respectively) if the operator $H(t)$  is positive (positive definite, respectively).

Notice that a positive self-adjoint time-invariant operator $H\in \mathrm{L}(X)$, where $X$ is a finite-dimensional linear space, is positive definite.

 \smallskip
In what follows, we will use the notation
$$
(z,w)_{H}:=(H(t)z,w)
$$
for a scalar product with the weight $H(t)$.
 \begin{theorem}[on the uniform dissipativity of the DAE]\label{Th_SingDissipat}
Let $f\in C([t_+,\infty)\times \Rn,\Rm)$ and $\lambda A+B$ be a singular pencil of operators such that its regular block $\lambda A_r+B_r$ from \eqref{penc} has the index not higher than 1. Assume that condition \ref{SoglSing} of Theorem \ref{Th_SingGlobSol} holds and condition \ref{InvSing} of Theorem \ref{Th_SingGlobSol} or condition \ref{InvSing-Lipsch} of Theorem \ref{Th_SingGlobSol-Lipsch} holds. Let the following conditions be satisfied:
\begin{enumerate}
\addtocounter{enumi}{2}
\item\label{Dissipat1Sing}  There exist a number $R>0$, a positive definite function $V\in C^1([t_+,\infty)\times D_{s_1}\times D_{p_1},\R)$, where a set $D_{s_1}\times D_{p_1}\subset X_{s_1}\times X_1$ is such that $D_{s_1}\times D_{p_1}\supset \{\|(x_{s_1},x_{p_1})\|\ge R\}$, and functions $U_j\in C([0,\infty))$, $j=0,1,2$, such that
    $U_0(r)$  is non-decreasing and $U_0(r)\to+\infty$ as $r\to+\infty$, $U_1(r)$ is increasing, $U_2(r)>0$ for $r>0$, and for all $(t,x_{s_1}+x_{s_2}+x_{p_1}+x_{p_2})\in L_{t_+}$, for which $x_{s_2}\in D_{s_2}$ and ${\|(x_{s_1},x_{p_1})\|\ge R}$, the inequality
    $$
    U_0(\|(x_{s_1},x_{p_1})\|)\le V(t,x_{s_1},x_{p_1})\le U_1(\|(x_{s_1},x_{p_1})\|)
    $$
    and one of the following inequalities (where $V'_{\eqref{DAEsysExtDE1},\eqref{DAEsysExtDE2}}(t,x_{s_1},x_{p_1})$ has the form \eqref{dVDAEsing}) hold:
  \begin{enumerate}[label={\upshape(\alph*)},ref={\upshape(\alph*)},topsep=1pt]
  \item\label{Dissipat1Sing:a}  $V'_{\eqref{DAEsysExtDE1},\eqref{DAEsysExtDE2}}(t,x_{s_1},x_{p_1})\le -U_2\big(\|(x_{s_1},x_{p_1})\|\big)$;

  \item\label{Dissipat1Sing:b}  $V'_{\eqref{DAEsysExtDE1},\eqref{DAEsysExtDE2}}(t,x_{s_1},x_{p_1})\le -U_2\big(\,((x_{s_1},x_{p_1}),(x_{s_1},x_{p_1}))_{H}\,\big)$, where  $H\in C([t_+,\infty),\mathrm{L}(X_{s_1}\times X_1))$  is some positive definite self-adjoint operator function such that $H(t)\big|_{X_{s_1}}\colon X_{s_1}\to X_{s_1}\times \{0\}$ and $H(t)\big|_{X_1}\colon X_1\to \{0\}\times X_1$ for any fixed $t$, and $\sup\limits_{t\in [t_+,\infty)}\|H(t)\|<\infty$;

  \item\label{Dissipat1Sing:c} $V'_{\eqref{DAEsysExtDE1},\eqref{DAEsysExtDE2}}(t,x_{s_1},x_{p_1})\le -\alpha\, V(t,x_{s_1},x_{p_1})$, where $\alpha>0$ is some constant.
  \end{enumerate}

\item\label{Dissipat2Sing}  There exist a constant  $\beta>0$ and a number $T> t_+$ such that $\|Q_2f(t,x_{s_1}+x_{s_2}+x_{p_1}+x_{p_2})\|\le \beta\, \|(x_{s_1},x_{p_1})\|$ for all $(t,x_{s_1}+x_{s_2}+x_{p_1}+x_{p_2})\in  L_T$, where $x_{s_2}\in D_{s_2}$.

\item\label{DissipatBs}  
     $\|F_2f(t,x)\|<+\infty$ for all $(t,x)\in L_{t_+}$ such that $S_2x\in D_{s_2}$ and $\|x\|\le C<\infty$ \,($C$ is an arbitrary constant).
\end{enumerate}
Then, for the initial points  $(t_0,x_0)\in L_{t_+}$ where $S_2x_0\in D_{s_2}$ and any function $\phi_{s_2}\in C([t_0,\infty),D_{s_2})$  satisfying the relations  $\phi_{s_2}(t_0)=S_2 x_0$ and $\sup\limits_{t\in [t_0,\infty)}\|\phi_{s_2}(t)\|<+\infty$ the equation \eqref{DAE}, where $S_2x=\phi_{s_2}(t)$,  is uniformly dissipative (uniformly ultimately bounded); when $\rank(\lambda A+B)=n<m$, the component $S_2 x$ is absent.
 \end{theorem}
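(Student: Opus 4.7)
The plan is to parallel the proof of Theorem \ref{Th_SingUstLagr} but replace LaSalle's Lagrange stability criterion by Yoshizawa's uniform ultimate boundedness theorem. First, I would verify that condition \ref{Dissipat1Sing} is strictly stronger than condition \ref{LagrSing} of Theorem \ref{Th_SingUstLagr}: the two-sided bound $U_0(\|(x_{s_1},x_{p_1})\|)\le V\le U_1(\|(x_{s_1},x_{p_1})\|)$ with $U_0\to\infty$ implies \ref{LagrSing1}, and the negative-definite upper bound on $V'_{\eqref{DAEsysExtDE1},\eqref{DAEsysExtDE2}}$ implies \ref{LagrSing2} and \ref{UstLagr}. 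Hence the hypotheses of Theorem \ref{Th_SingGlobSol} (or \ref{Th_SingGlobSol-Lipsch}) are met and, for every admissible initial point $(t_0,x_0)\in L_{t_+}$ and every admissible $\phi_{s_2}$, the IVP \eqref{DAE}, \eqref{ini} has a unique global solution of the form $x(t)=\omega_{s_1}(t)+\phi_{s_2}(t)+\omega_{p_1}(t)+q(t,\omega_{s_1}(t),\omega_{p_1}(t))$, where $\omega(t)=(\omega_{s_1}(t)^{\T},\omega_{p_1}(t)^{\T})^{\T}$ is the unique global solution of the ODE \eqref{DAEsysExtDEeta}.

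The core step is to apply Yoshizawa's theorem on uniform ultimate boundedness to \eqref{DAEsysExtDEeta} with the Lyapunov function $V(t,\omega)=V(t,x_{s_1},x_{p_1})$ supplied by condition \ref{Dissipat1Sing}. In subcase \ref{Dissipat1Sing:a} this is immediate: the two-sided bound with $U_0$ radially unbounded makes $V$ proper uniformly in $t$, and the dissipation inequality $V'_{\eqref{DAEsysExtDEeta}}\le -U_2(\|\omega\|)$ with $U_2(r)>0$ for $r>0$ yields constants $M>0$ (independent of $t_0,x_0$) and $\tau=\tau(x_0)\ge 0$ such that $\|\omega(t)\|<M$ for all $t\ge t_0+\tau$. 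In subcase \ref{Dissipat1Sing:b}, positive definiteness of $H(t)$ gives a constant $c>0$ with $((x_{s_1},x_{p_1}),(x_{s_1},x_{p_1}))_H\ge c\|\omega\|^2$, so $V'_{\eqref{DAEsysExtDEeta}}\le -U_2(c\|\omega\|^2)$ reduces the situation to subcase \ref{Dissipat1Sing:a} (the hypothesis $\sup\|H(t)\|<\infty$ ensures that $U_2(c\|\omega\|^2)$ is well-controlled from below for $\|\omega\|\ge R$). In subcase \ref{Dissipat1Sing:c}, Gronwall's inequality gives $V(t,\omega(t))\le V(t_0,\omega(t_0))e^{-\alpha(t-t_0)}$, and combined with $U_0(\|\omega\|)\le V\le U_1(\|\omega\|)$ this again produces uniform ultimate boundedness of $\omega$.

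Once $\|\omega(t)\|<M$ for all $t\ge t_0+\tau$, the algebraic component $u(t)=q(t,\omega_{s_1}(t),\omega_{p_1}(t))=x_{p_2}(t)$ is bounded via the implicit relation \eqref{ExtAE1equiv2}: using condition \ref{Dissipat2Sing} and the finiteness of $\|\Es B_2^{(-1)}\|$, one obtains $\|u(t)\|\le \|\Es B_2^{(-1)}\|\,\beta\,\|\omega(t)\|\le\|\Es B_2^{(-1)}\|\,\beta\, M$ for all $t\ge\max\{t_0+\tau,T\}$. Combining this with the boundedness assumption $\sup_{t\ge t_0}\|\phi_{s_2}(t)\|<\infty$ and the bound on $\omega(t)$ yields a universal constant $C>0$ such that $\|x(t)\|<C$ eventually, with the transient time depending only on $x_0$ and not on $t_0$; this is uniform ultimate boundedness of $x(t)$. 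Condition \ref{DissipatBs} guarantees that the algebraic identity \eqref{DAEsysExtAE2}, equivalently $F_2 B x(t)=F_2 f(t,x(t))$, remains meaningful along the trajectory, so $x(t)$ is a genuine solution of the original DAE.

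The main obstacle I expect is the unified handling of the three alternative derivative bounds \ref{Dissipat1Sing:a}--\ref{Dissipat1Sing:c}: in each case one must verify that the conclusion of Yoshizawa's theorem is \emph{uniform} in $t_0$, which ultimately rests on the time-independence of $U_0, U_1, U_2$ and on uniform bounds (such as $\sup\|H(t)\|<\infty$ in case \ref{Dissipat1Sing:b} and the constant coefficient $\alpha$ in case \ref{Dissipat1Sing:c}). A secondary technical point is that condition \ref{Dissipat2Sing} is only required on $L_T$ rather than on all of $L_{t_+}$, so the time from which the bound on $u(t)$ takes effect is $\max\{t_0+\tau,T\}$ rather than $t_0+\tau$; one must check that this still gives uniform ultimate boundedness in the sense of the definition, which it does because for $t_0\ge T$ the switch time is simply $t_0+\tau$, and for $t_0<T$ the additional delay $T-t_0$ does not enlarge $\tau(x_0)$ after shifting.
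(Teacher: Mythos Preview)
Your proposal is correct and follows essentially the same route as the paper: reduce to the global existence theorem via the observation that condition~\ref{Dissipat1Sing} forces $V'_{\eqref{DAEsysExtDE1},\eqref{DAEsysExtDE2}}\le 0$, invoke Yoshizawa's uniform ultimate boundedness theorem for the reduced ODE~\eqref{DAEsysExtDEeta} to bound $\omega(t)$, then use condition~\ref{Dissipat2Sing} and the relation~\eqref{ExtAE1equiv2} to bound the algebraic component $u(t)$, and finally assemble the bound on $x(t)$. The paper handles case~\ref{Dissipat1Sing:c} by citing a corollary of Yoshizawa's Theorem~10.4 rather than Gronwall, and for case~\ref{Dissipat1Sing:b} it uses the two-sided estimate $C_0\|\omega\|^2\le (H(t)\omega,\omega)\le C_1\|\omega\|^2$; note that your reduction ``$V'\le -U_2(c\|\omega\|^2)$'' tacitly assumes monotonicity of $U_2$, which is not hypothesised, so the cleaner fix (which the paper also leaves implicit) is to set $\widetilde U_2(r):=\min_{s\in[C_0 r^2,C_1 r^2]}U_2(s)$ and obtain $V'\le -\widetilde U_2(\|\omega\|)$.
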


 \begin{remark}\label{RemDissipatSing}
If condition~\ref{Dissipat1Sing} of Theorem~\ref{Th_SingDissipat} holds, then condition~\ref{ExtensSing} of Theorem~\ref{Th_SingGlobSol} holds.  This can be readily verified if we take into account that by virtue of condition~\ref{Dissipat1Sing} of Theorem~\ref{Th_SingDissipat}, the inequality $V'_{\eqref{DAEsysExtDE1},\eqref{DAEsysExtDE2}}(t,x_{s_1},x_{p_1})\le 0$ holds (for all $(t,x_{s_1}+x_{s_2}+x_{p_1}+x_{p_2})\in L_{t_+}$ such that $x_{s_2}\in D_{s_2}$ and ${\|(x_{s_1},x_{p_1})\|\ge R}$) and the inequality $dv/dt\le 0$ has no positive solutions with finite escape time.
 \end{remark}

 \begin{proof}   
It follows from the conditions of the present theorem and Remark \ref{RemDissipatSing} that the conditions of Theorem \ref{Th_SingGlobSol} (or Theorem \ref{Th_SingGlobSol-Lipsch}) are satisfied.  Hence, there exists a unique global solution $x(t)$ of the IVP \eqref{DAE}, \eqref{ini} for each consistent initial point $(t_0,x_0)$ with $S_2x_0\in D_{s_2}$ and some chosen function $\phi_{s_2}\in C([t_0,\infty),D_{s_2})$ with the initial value $S_2x_0$ which defines the component $S_2x(t)=\phi_{s_2}(t)$. As in the proof of Theorem \ref{Th_SingGlobSol} (as well as of Theorem \ref{Th_SingGlobSol-Lipsch}), the solution has the form $x(t)=\omega_{s_1}(t)+\phi_{s_2}(t)+\omega_{p_1}(t)+ q(t,\omega_{s_1}(t),\omega_{p_1}(t))$, where $q(t,\omega_{s_1}(t),\omega_{p_1}(t))= \eta(t,\omega_{s_1}(t),\phi_{s_2}(t),\omega_{p_1}(t))$. By virtue of the conditions of the present theorem, it is assumed that  $\sup\limits_{t\in [t_0,\infty)}\|\phi_{s_2}(t)\|=\gamma<+\infty$.

Using condition~\ref{Dissipat1Sing}, where the inequality \ref{Dissipat1Sing:a}  or \ref{Dissipat1Sing:c} holds, and the proof of the theorem \cite[Theorem 10.4]{Yoshizawa} (on the dissipativity of explicit ODEs) and its corollary, we obtain that solutions of \eqref{DAEsysExtDEeta} are uniformly dissipative, that is, there exists a number $M>0$ and, for each solution $\omega(t)=(\omega_{s_1}(t)^\T,x_{p_1}(t)^\T)^\T$ satisfying the initial condition \eqref{DEeta_ini}, there exists a number $\tau_1=\tau_1(x_0)\ge t_0$ such that $\|\omega(t)\|<M$  for every $t\ge t_0+\tau_1$. Further, it follows from the properties of the operator $H(t)$ present in the inequality \ref{Dissipat1Sing:b} that $C_0\|\omega\|^2\le(H(t)\omega,\omega)\le C_1\|\omega\|^2$, where $C_0, C_1>0$ are some constants, for all $t$, $\omega$. Thus, it is easy to verify that from condition~\ref{Dissipat1Sing}, where the inequality \ref{Dissipat1Sing:b} holds, it follows that the solutions of the equation \eqref{DAEsysExtDEeta} are uniformly dissipative.

Recall that the function $u(t)= q(t,\omega_{s_1}(t),\omega_{p_1}(t))$ satisfies the equality \eqref{ExtAE1equiv2}. Therefore, according to condition \ref{Dissipat2Sing}, there exists a constant $\beta_0>0$ and a number $\tau_2=\tau_2(x_0)>t_0$ such that
$\|u(t)\|\le \|\Es B_2^{(-1)}\|\, \|Q_2 f(t,\omega_{s_1}(t)+\phi_{s_2}(t)+\omega_{p_1}(t)+u(t))\|\le \beta_0\,\|\omega(t)\|<\beta_0\,M$ for all $t\ge \tau_2$. Hence, for each solution with the initial values $t_0$, $x_0$ there exists a number $\tau=\tau(x_0)\ge t_0$ such that $\|x(t)\|\le \|\omega_{s_1}(t)\|+\|\phi_{s_2}(t)\|+\|\omega_{p_1}(t)\|+ \|u(t)\|< (2+\beta_0)M+\gamma=k$ for all $t\in [t_0+\tau,\infty)$, where the constant $k>0$ does not depend on $t_0$, $x_0$. Consequently, the DAE \eqref{DAE} is uniformly dissipative, and condition \ref{LagrBs} ensures the correctness of the equality \eqref{DAEsysExtAE2}.
\end{proof}

 \section{Replacement some conditions of the theorems by weaker ones}\label{SectWeakerCond}

This section shows how we can weaken some requirements of Theorems \ref{Th_SingGlobSol}, \ref{Th_SingGlobSol-Lipsch} and, as a consequence, some requirements of another theorems as well.

Below we give the definitions (cf. \cite{RF1,Fil.UMJ}) that will be used in the sequel.

Let $Z$ and $W$ be $n$-dimensional linear spaces.

A system of $n$ pairwise disjoint projectors $\{\Theta_i\}_{i=1}^n$ ($\Theta_l\, \Theta_j =\delta_{lj}\, \Theta_l$; the projectors are one-dimensional), where $\Theta_i\in \mathrm{L}(Z)$, such that their sum is the identity operator $I_Z=\sum\limits_{i=1}^n\Theta_i$ in $Z$ is called an \emph{additive resolution (or decomposition) of the identity} in $Z$.
Notice that an additive resolution of the identity in $Z$ generates the decomposition of $Z$ into the direct sum of the subspaces $Z_i=\Theta_i\, Z$, $i=1,...,n$ (i.e., $Z=Z_1\dot +...\dot +Z_n$).

An operator function $\Phi\colon D\to \mathrm{L}(W,Z)$, where $D\subset W$,  is called \emph{basis invertible} on an interval $J\subset D$ (or on a convex hull $J=\mathop{\mathrm{Conv}}\{w_1,w_2\}$ of $w_1,\, w_2\in D$) if for some additive resolution (decomposition) of the identity $\{\Theta_i\}_{i=1}^n$ in $Z$ and for each set of elements $\{w^k\}_{k=1}^n\subset J$ the operator $\Lambda=\sum\limits_{i=1}^n\Theta_i\Phi(w^i)\in \mathrm{L}(W,Z)$ has an inverse $\Lambda^{-1}\in \mathrm{L}(Z,W)$. This definition in terms of matrices is given in  \cite[p.~176]{Fil.MPhAG}. 

Note that the property of basis invertibility does not depend on the choice of an additive resolution of the identity in $Z$. Obviously, it follows from the basis invertibility of the mapping $\Phi$ on an interval $J$ that for each $w^*\in J$ the operator $\Phi (w^*)\in \mathrm{L}(W,Z)$ is invertible. The converse statement does not hold true, except for the case when $W$, $Z$ are one-dimensional spaces (see \cite[Example~1]{Fil.UMJ}).

 \begin{theorem}\label{Th_SingGlobSolBInv}
Theorem~\ref{Th_SingGlobSol} remains valid if conditions \ref{SoglSing} and \ref{InvSing} are replaced by the following:
\begin{enumerate}
\item\label{SoglSing2}  For any fixed  $t\in [t_+,\infty)$, $x_{s_1}\in X_{s_1}$, $x_{s_2}\in D_{s_2}$, where $D_{s_2}\subset X_{s_2}$ is a some set, and $x_{p_1}\in X_1$, there exists $x_{p_2}\in X_2$ such that $(t,x_{s_1}+x_{s_2}+x_{p_1}+x_{p_2})\in L_{t_+}$.

\item\label{BasInvSing} There exists the partial derivative ${\frac{\partial}{\partial x} f\in C([t_+,\infty)\times \Rn, \mathrm{L}(\Rn,\Rm))}$.\; For any fixed $t_*$, ${x_*^i=x_{s_1}^*+x_{s_2}^*+x_{p_1}^*+x_{p_2}^i}$  such that $(t_*,x_*^i)\in L_{t_+}$ and $x_{s_2}^*\in D_{s_2}$, the operator function  $\Phi_{t_*,x_{s_1}^*,x_{s_2}^*,x_{p_1}^*}(x_{p_2})$ defined by
   \begin{equation}\label{funcPhiSingBInv}
    \Phi_{t_*,x_{s_1}^*,x_{s_2}^*,x_{p_1}^*}\colon\! X_2\to \mathrm{L}(X_2,Y_2),\;\,
    \Phi_{t_*,x_{s_1}^*,x_{s_2}^*,x_{p_1}^*}(x_{p_2})=\!\bigg[\frac{\partial Q_2f}{\partial x}(t,x_{s_1}^*+x_{s_2}^*+x_{p_1}^*+x_{p_2})- B\bigg] P_2,
   \end{equation}
    is basis invertible on $[x_{p_2}^1,x_{p_2}^2]$.
\end{enumerate}
 \end{theorem}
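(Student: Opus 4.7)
The plan is to show that conditions \ref{SoglSing2} and \ref{BasInvSing} of the present theorem jointly imply conditions \ref{SoglSing} and \ref{InvSing} of Theorem~\ref{Th_SingGlobSol}, after which the conclusion is a direct application of that theorem.

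Condition \ref{InvSing} is immediate: for any $(t_*,x_*)\in L_{t_+}$ with $x_*=x_{s_1}^*+x_{s_2}^*+x_{p_1}^*+x_{p_2}^*$ and $x_{s_2}^*\in D_{s_2}$, apply condition \ref{BasInvSing} to the degenerate ``interval'' $[x_{p_2}^*,x_{p_2}^*]=\{x_{p_2}^*\}$. For any additive resolution of the identity $\{\Theta_i\}_{i=1}^{d}$ in $Y_2$ (where $d=\dim Y_2=\dim X_2$), the only available choice $\{w^k\}_{k=1}^d$ in this singleton is $w^k=x_{p_2}^*$ for every $k$, whence $\sum_{i=1}^{d}\Theta_i\Phi_{t_*,x_{s_1}^*,x_{s_2}^*,x_{p_1}^*}(x_{p_2}^*)=\Phi_{t_*,x_*}$ must be invertible.

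The main step is to upgrade existence in \ref{SoglSing2} to existence-and-uniqueness as in \ref{SoglSing}. Fix $t_*$, $x_{s_1}^*$, $x_{s_2}^*\in D_{s_2}$, $x_{p_1}^*$, and suppose there exist two elements $x_{p_2}^1,x_{p_2}^2\in X_2$ with $(t_*,x_*^i)\in L_{t_+}$, where $x_*^i=x_{s_1}^*+x_{s_2}^*+x_{p_1}^*+x_{p_2}^i$. Setting
$$g(x_{p_2}):=Q_2\bigl[B(x_{s_1}^*+x_{s_2}^*+x_{p_1}^*+x_{p_2})-f(t_*,x_{s_1}^*+x_{s_2}^*+x_{p_1}^*+x_{p_2})\bigr],$$
both $x_{p_2}^i$ satisfy the $Q_2$-part \eqref{DAEsysProj3} of the manifold equations, hence $g(x_{p_2}^1)=g(x_{p_2}^2)=0$. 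A direct computation gives $g'(x_{p_2})=-\Phi_{t_*,x_{s_1}^*,x_{s_2}^*,x_{p_1}^*}(x_{p_2})\in\mathrm{L}(X_2,Y_2)$. Picking any additive resolution of the identity $\{\Theta_i\}_{i=1}^{d}$ in $Y_2$ and applying the classical scalar mean value theorem componentwise to each $\Theta_i g$ on the segment $[x_{p_2}^1,x_{p_2}^2]$, one obtains points $\theta_i\in[x_{p_2}^1,x_{p_2}^2]$ such that
$$0=g(x_{p_2}^2)-g(x_{p_2}^1)=\biggl(\sum_{i=1}^{d}\Theta_i g'(\theta_i)\biggr)(x_{p_2}^2-x_{p_2}^1)=-\Lambda\,(x_{p_2}^2-x_{p_2}^1),$$
where $\Lambda=\sum_{i=1}^{d}\Theta_i\Phi_{t_*,x_{s_1}^*,x_{s_2}^*,x_{p_1}^*}(\theta_i)$ is invertible by condition \ref{BasInvSing}, forcing $x_{p_2}^1=x_{p_2}^2$.

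The main obstacle is justifying the componentwise mean value identity in operator form; once bases of $X_2$ and $Y_2$ are fixed with $\Theta_i$ the projector onto the $i$-th coordinate of $Y_2$, it reduces to the classical scalar mean value theorem applied to each real-valued coordinate of $g$, combined with $\sum_i\Theta_i=I_{Y_2}$. This is essentially the argument already used in \cite[p.~176]{Fil.MPhAG} and \cite[Example~1]{Fil.UMJ}. With conditions \ref{SoglSing} and \ref{InvSing} thus established, the conclusion is a direct application of Theorem~\ref{Th_SingGlobSol}.
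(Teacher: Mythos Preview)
Your proof is correct and follows essentially the same approach as the paper. The only cosmetic difference is that the paper works with the $X_2$-valued function $\Psi=B_2^{-1}Q_2\tilde f-x_{p_2}$ (so the additive resolution of the identity lives in $X_2$, applied to the derived operator function $W=B_2^{-1}\Phi$), whereas you work directly with the $Y_2$-valued function $g=-B_2\Psi$ and a resolution of the identity in $Y_2$; since $B_2$ is invertible these are equivalent, and your choice is arguably the more natural one given that the definition of basis invertibility for $\Phi\colon X_2\to\mathrm{L}(X_2,Y_2)$ places the projectors in $Y_2$.
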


 \begin{remark}\label{RemarkInv-BasInv}
Note that $\Phi_{t_*,x_{s_1}^*,x_{s_2}^*,x_{p_1}^*}(x_{p_2}^*)=\Phi_{t_*,x_*}$, where  $x_*=x_{s_1}^*+x_{s_2}^*+x_{p_1}^*+x_{p_2}^*$ and $\Phi_{t_*,x_*}$ is the operator defined by \eqref{funcPhiSing}, for any fixed $x_{p_2}^*\in X_2$. In addition, if the space $X_2$ is one-dimensional, then condition \ref{BasInvSing} of Theorem \ref{Th_SingGlobSolBInv} is equivalent
to condition \ref{InvSing} of Theorem \ref{Th_SingGlobSol}.
 \end{remark}

 \begin{proof}
As in the proof of Theorem \ref{Th_SingGlobSol}, consider the system
\eqref{DAEsysExtDE1}--\eqref{DAEsysExtAE2} equivalent to the DAE \eqref{DAE} and the equation \eqref{DAEsysAE1equiv}, where $\Psi$ is the mapping \eqref{funcPsiSing}, which is equivalent to the equation \eqref{DAEsysExtAE1} (or \eqref{DAEsysExtAE1equiv}, where $\widehat{\Psi}$ is the mapping \eqref{HatPsiSing}).

The partial derivative of $\Psi$ with respect to $x_{p_2}$ at the point $(t_*,x^*_{s_1},x^*_{s_2},x^*_{p_1},x^*_{p_2})$ has the form \eqref{W_tx} and can be written as
 \begin{equation*}
W_{t_*,x_*}=\dfrac{\partial \Psi}{\partial x_{p_2}}(t_*,x^*_{s_1},x^*_{s_2},x^*_{p_1},x^*_{p_2}) =B_2^{-1}\Phi_{t_*,x_{s_1}^*,x_{s_2}^*,x_{p_1}^*}(x_{p_2}^*)\in\mathrm{L}(X_2),
 \end{equation*}
where $x_*=x^*_{s_1}+x^*_{s_2}+x^*_{p_1}+x^*_{p_2}$ and $\Phi_{t_*,x_{s_1}^*,x_{s_2}^*,x_{p_1}^*}\in C(X_2,\mathrm{L}(X_2,Y_2))$ is the operator function defined by \eqref{funcPhiSingBInv}. Define the operator function
 \begin{equation}\label{Wu_tx}
W_{t_*,x_{s_1}^*,x_{s_2}^*,x_{p_1}^*}\colon\! X_2\to\mathrm{L}(X_2),\;\,  W_{t_*,x_{s_1}^*,x_{s_2}^*,x_{p_1}^*}(x_{p_2}):=\!\dfrac{\partial \Psi}{\partial x_{p_2}}(t_*,x^*_{s_1},x^*_{s_2},x^*_{p_1},x_{p_2}) \!=\! B_2^{-1}\Phi_{t_*,x_{s_1}^*,x_{s_2}^*,x_{p_1}^*}(x_{p_2}),\!
 \end{equation}
where $t_*$, $x_{s_1}^*$, $x_{s_2}^*$ and $x_{p_1}^*$ are arbitrary fixed elements of $[t_+,\infty)$, $X_{s_1}$, $X_{s_2}$ and $X_1$, respectively. Recall that the basis invertibility of the operator function $\Phi_{t_*,x_{s_1}^*,x_{s_2}^*,x_{p_1}^*}\colon X_2\to \mathrm{L}(X_2,Y_2)$ on some interval $J$ imply the invertibility of the operator $\Phi_{t_*,x_{s_1}^*,x_{s_2}^*,x_{p_1}^*}(x_{p_2}^*)$ for each (fixed) $x_{p_2}^*\in J$. Thus, it follows from condition \ref{BasInvSing} of the present theorem that for any fixed element $(t_*,x_{s_1}^*+x_{s_2}^*+x_{p_1}^*+x_{p_2}^*)\in L_{t_+}$  such that $x_{s_2}^*\in D_{s_2}$ the operator $W_{t_*,x_*}=W_{t_*,x_{s_1}^*,x_{s_2}^*,x_{p_1}^*}(x_{p_2}^*)$ (where $x_*=x_{s_1}^*+x_{s_2}^*+x_{p_1}^*+x_{p_2}^*$, and $W_{t_*,x_*}$ was defined in \eqref{W_tx}) has the inverse $W_{t_*,x_*}^{-1}= \big(\Phi_{t_*,x_{s_1}^*,x_{s_2}^*,x_{p_1}^*}(x_{p_2}^*)\big)^{-1}B_2\in \mathrm{L}(X_2)$.

 \smallskip
Let us prove that condition \ref{SoglSing} of Theorem \ref{Th_SingGlobSol} holds. Due to condition \ref{SoglSing2} of the present theorem, for each (fixed) $t\in [t_+,\infty)$, $x_{s_1}\in X_{s_1}$, $x_{s_2}\in D_{s_2}$, $x_{p_1}\in X_1$ there exists $x_{p_2}\in X_2$ such that $(t,x_{s_1}+x_{s_2}+x_{p_1}+x_{p_2})\in L_{t_+}$, and it is necessary to prove the uniqueness of such a $x_{p_2}$ in order to show that condition \ref{SoglSing} of Theorem \ref{Th_SingGlobSol} is satisfied.

Take arbitrary fixed $t_*\in [t_+,\infty)$, $x_{s_1}^*\in X_{s_1}$, $x_{s_2}^*\in D_{s_2}$, $x_{p_1}^*\in X_1$ and $x_{p_2}^i\in X_2$, ${i=1,2}$, such that ${(t_*,x_{s_1}^*+x_{s_2}^*+x_{p_1}^*+x_{p_2}^i)\in L_{t_+}}$,  then  $(t_*,x_{s_1}^*,x_{s_2}^*,x_{p_1}^*,x_{p_2}^i)$ must satisfy  \eqref{DAEsysAE1equiv},  i.e., ${\Psi(t_*,x_{s_1}^*,x_{s_2}^*,x_{p_1}^*,x_{p_2}^i)=0}$, ${i=1,2}$.
Note that the projector $P_2$ restricted to $X_2$ is the identity operator in $X_2$.  It follows from the basis invertibility of the operator function $\Phi_{t_*,x_{s_1}^*,x_{s_2}^*,x_{p_1}^*}$ on $[x_{p_2}^1,x_{p_2}^2]$ that for some additive resolution of the identity $\{\Theta_i\}_{i=1}^d$ in $X_2$ (where $d=\dim X_2$;\, $\sum\limits_{i=1}^d\Theta_i=I_{X_2}=P_2\big|_{X_2}$) the operator
\begin{equation}\label{LambdaSing}
\Lambda=\sum\limits_{i=1}^d \Theta_i W_{t_*,x_{s_1}^*,x_{s_2}^*,x_{p_1}^*}(x_{p_2,i})=B_2^{-1}\sum\limits_{i=1}^d \Theta_i \Phi_{t_*,x_{s_1}^*,x_{s_2}^*,x_{p_1}^*}(x_{p_2,i})
\end{equation}
is invertible for each set of the elements $\{x_{p_2,k}\}_{k=1}^d\subset [x_{p_2}^1,x_{p_2}^2]$. Hence, the operator function $W_{t_*,x_{s_1}^*,x_{s_2}^*,x_{p_1}^*}$ is basis invertible on $[x_{p_2}^1,x_{p_2}^2]$. 
Using the additive resolution of the identity $\{\Theta_i\}_{i=1}^d$, we define the functions
$$
\Psi_i:=\Theta_i \Psi\colon [t_+,\infty)\times X_{s_1}\times X_{s_2}\times X_1\times X_2\to X_{2,i}=\Theta_i X_2,\qquad {i=1,...,d}.
$$
Note that $X_{2,i}$, ${i=1,...,d}$,  are one-dimensional spaces isomorphic to $\R$, and $X_2=X_{2,1}\dot +...\dot +X_{2,d}$. By the finite increment formula, there exist points $x_{p_2,i}\in [x_{p_2}^1,x_{p_2}^2]$, ${i=1,...,d}$, such that
 \begin{multline*}
\Psi_i(t_*,x_{s_1}^*,x_{s_2}^*,x_{p_1}^*,x_{p_2}^2)- \Psi_i(t_*,x_{s_1}^*,x_{s_2}^*,x_{p_1}^*,x_{p_2}^1) = \dfrac{\partial \Psi_i}{\partial x_{p_2}}(t_*,x_{s_1}^*,x_{s_2}^*,x_{p_1}^*,x_{p_2,i}) \big(x_{p_2}^2-x_{p_2}^1\big)=  \\
=\Theta_i \dfrac{\partial \Psi}{\partial x_{p_2}}(t_*,x_{s_1}^*,x_{s_2}^*,x_{p_1}^*,x_{p_2,i}) \big(x_{p_2}^2-x_{p_2}^1\big)= \Theta_i W_{t_*,x_{s_1}^*,x_{s_2}^*,x_{p_1}^*}(x_{p_2,i}) \big(x_{p_2}^2-x_{p_2}^1\big), i=1,...,d.
\end{multline*}
Since $\Psi(t_*,x_{s_1}^*,x_{s_2}^*,x_{p_1}^*,x_{p_2}^i)=0$, $i=1,2$, then, summing the obtained expressions over $i$, we obtain
 $$
\sum\limits_{i=1}^d \Theta_i W_{t_*,x_{s_1}^*,x_{s_2}^*,x_{p_1}^*}(x_{p_2,i})\big(x_{p_2}^2-x_{p_2}^1\big)= \Lambda\big(x_{p_2}^2-x_{p_2}^1\big)=0,
 $$
where $\Lambda$ is defined in \eqref{LambdaSing}. Since the operator $\Lambda^{-1}$ exists, then $x_{p_2}^2=x_{p_2}^1$. This holds for each point $(t_*,x_{s_1}^*+x_{s_2}^*+x_{p_1}^*+x_{p_2}^i)\in L_{t_+}$, $i=1,2$, where $x_{s_2}^*\in D_{s_2}$,  since these points were chosen arbitrarily.
Thus, the proof of condition \ref{SoglSing} of Theorem \ref{Th_SingGlobSol}  is complete.

As in the proof of Theorem \ref{Th_SingGlobSol}, take arbitrary fixed $t_*\in [t_+,\infty)$, $x_{s_1}^*\in X_{s_1}$, $x_{s_2}^*\in D_{s_2}$, $x_{p_1}^*\in X_1$. As proved above, there exists a unique $x_{p_2}^*\in X_2$ such that $(t_*,x_*)\in L_{t_+}$, where $x_*=x_{s_1}^*+x_{s_2}^*+x_{p_1}^*+x_{p_2}^*$,
and for this $(t_*,x_*)$ the operator $W_{t_*,x_*}=W_{t_*,x_{s_1}^*,x_{s_2}^*,x_{p_1}^*}(x_{p_2}^*)$ has the inverse $W_{t_*,x_*}^{-1}\in \mathrm{L}(X_2)$.

The further proof coincides with the proof of Theorem \ref{Th_SingGlobSol}.

Generally, we proved above that conditions \ref{SoglSing}, \ref{InvSing} of Theorem \ref{Th_SingGlobSol} are satisfied, and the rest of the conditions of Theorem \ref{Th_SingGlobSol} are the same as in the present theorem. Hence, the theorem is proved.
 \end{proof}

 \begin{corollary}
Theorems~\ref{Th_SingUstLagr}, \ref{Th_singNeLagr} and \ref{Th_SingDissipat} (which contain conditions \ref{SoglSing}, \ref{InvSing} of Theorem \ref{Th_SingGlobSol}) remain valid if instead of condition \ref{SoglSing} of Theorem \ref{Th_SingGlobSol} and instead of condition \ref{InvSing} of Theorem \ref{Th_SingGlobSol} or condition \ref{InvSing-Lipsch} of Theorem \ref{Th_SingGlobSol-Lipsch} one requires that conditions \ref{SoglSing2}, \ref{BasInvSing} of Theorem \ref{Th_SingGlobSolBInv} hold.
 \end{corollary}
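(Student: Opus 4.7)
The plan is to reduce the corollary directly to the already-proved Theorems~\ref{Th_SingUstLagr}, \ref{Th_singNeLagr} and \ref{Th_SingDissipat} by showing that hypotheses \ref{SoglSing2}, \ref{BasInvSing} of Theorem~\ref{Th_SingGlobSolBInv} imply both hypothesis \ref{SoglSing} and hypothesis \ref{InvSing} of Theorem~\ref{Th_SingGlobSol}. Since the remaining hypotheses of Theorems~\ref{Th_SingUstLagr}, \ref{Th_singNeLagr}, \ref{Th_SingDissipat} (the differential inequality, positive definiteness, manifold consistency, and boundedness-type conditions) are phrased identically in the two settings and do not involve the regularity of $\Phi_{t_*,x_*}$ or the uniqueness of $x_{p_2}$, this reduction yields the desired conclusions without any further work.

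First I would quote the argument already executed in the first part of the proof of Theorem~\ref{Th_SingGlobSolBInv}: starting from the basis invertibility of $\Phi_{t_*,x_{s_1}^*,x_{s_2}^*,x_{p_1}^*}$ on an arbitrary segment $[x_{p_2}^1,x_{p_2}^2]$, apply the finite increment formula component-wise via an additive resolution of the identity $\{\Theta_i\}_{i=1}^{d}$ in $X_2$ to produce an invertible operator $\Lambda$ satisfying $\Lambda(x_{p_2}^2-x_{p_2}^1)=0$, which forces $x_{p_2}^2=x_{p_2}^1$. Combined with the existence assertion of condition \ref{SoglSing2}, this yields the uniqueness required by condition \ref{SoglSing}. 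Pointwise invertibility of $\Phi_{t_*,x_*}$, i.e.\ condition \ref{InvSing}, is an immediate consequence of basis invertibility together with Remark~\ref{RemarkInv-BasInv}.

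Next, to cover the variant of Theorems~\ref{Th_SingUstLagr}, \ref{Th_singNeLagr}, \ref{Th_SingDissipat} in which condition \ref{InvSing-Lipsch} of Theorem~\ref{Th_SingGlobSol-Lipsch} is used in place of condition \ref{InvSing}, I would invoke Corollary~\ref{Propos-SingGlobSol}, which states that \ref{InvSing} implies \ref{InvSing-Lipsch}. Chaining this with the implication established above shows that \ref{SoglSing2} and \ref{BasInvSing} together imply \ref{InvSing-Lipsch} as well, so every hypothesis required by each of the three target theorems is in place, regardless of which form of condition 2 is used.

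I do not anticipate any substantive obstacle: the corollary is essentially bookkeeping, since the implication ``\ref{SoglSing2}$+$\ref{BasInvSing}$\Rightarrow$\ref{SoglSing}$+$\ref{InvSing}'' has already been extracted inside the proof of Theorem~\ref{Th_SingGlobSolBInv}. The only minor point worth double-checking is that in each of Theorems~\ref{Th_SingUstLagr}, \ref{Th_singNeLagr}, \ref{Th_SingDissipat} the uniqueness of $x_{p_2}$ from condition \ref{SoglSing} is used solely through the initial appeal to Theorem~\ref{Th_SingGlobSol} or \ref{Th_SingGlobSol-Lipsch} at the start of the respective proofs, and not silently invoked later; this is clear from inspection.
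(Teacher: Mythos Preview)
Your proposal is correct and matches the reasoning the paper implicitly has in mind; the paper states this corollary without proof, treating it as an immediate consequence of the fact, established inside the proof of Theorem~\ref{Th_SingGlobSolBInv}, that conditions \ref{SoglSing2} and \ref{BasInvSing} together imply conditions \ref{SoglSing} and \ref{InvSing} of Theorem~\ref{Th_SingGlobSol}. One minor simplification: your third paragraph invoking Corollary~\ref{Propos-SingGlobSol} is unnecessary, since the hypotheses of Theorems~\ref{Th_SingUstLagr}, \ref{Th_singNeLagr}, \ref{Th_SingDissipat} are stated disjunctively (``condition~\ref{InvSing} of Theorem~\ref{Th_SingGlobSol} \emph{or} condition~\ref{InvSing-Lipsch} of Theorem~\ref{Th_SingGlobSol-Lipsch}''), so establishing condition~\ref{InvSing} alone already suffices.
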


Below we show how condition \ref{ExtensSing} of Theorem \ref{Th_SingGlobSol} can be weakened.

 First, we consider an ODE
\begin{equation}\label{ODE}
\frac{dx}{dt}=F(t,x),
\end{equation}
where $t\in [t_+,\infty)$, $t_+\ge 0$, $x\in W$ and $W$ is an $n$-dimensional  Euclidean space, and the function ${F\in C([t_+,\infty)\times W,W)}$ satisfies locally a Lipschitz condition ($F(t,x)$ is locally Lipschitz continuous) with respect to $x$ on $[t_+,\infty)\times W$, i.e., for each point $(t_*,x_*)\in [t_+,\infty)\times W$ there exists a neighborhood $U(t_*,x_*)$ and a number $L\ge 0$ such that $\|F(t,x_1)-F(t,x_2)\|\le L\|x_1-x_2\|$ for any $(t,x_1),(t,x_2)\in U(t_*,x_*)$. 
According to \cite{LaSal-Lef}, a solution $x(t)$ of the ODE \eqref{ODE}, which satisfies some initial condition $x(t_0)=x_0$, is called \emph{defined in the future}, if it can be extended for all $t\ge t_0$, i.e., to the whole interval $[t_0,\infty)$, and hence this solution is global by the definition given in this paper.  Thus, these definitions are equivalent.
Consider the ODE (the ODE \eqref{ODE} with a truncation)
\begin{equation}\label{ODE-cut}
\frac{dx}{dt}=F_{\scrscr T}(t,x),\quad \text{where}\quad
F_{\scrscr T}(t,x):= \begin{cases}
 F(t,x), & t_+ \le t \le T, \\
 F(T,x), & t > T,
\end{cases}\quad  \text{$T\ge t_+$ is a parameter}.
\end{equation}
The function $F_{\scrscr T}(t,x)$  is called the \emph{truncation} of the function $F(t,x)$ over $t$, and it has the same properties as $F(t,x)$, i.e.,  $F_{\scrscr T}(t,x)$ is continuous and locally satisfies a Lipschitz condition with respect to $x$ on $[t_+,\infty)\times W$.

Below is the lemma proved in \cite{Fil}, which generalizes Theorem~\cite[Chapter~IV, Theorem~XIII]{LaSal-Lef} and will be used in the sequel.

Recall that if $D$ is some set in a space $X$, then $D^c$ denotes the complement of the set $D$ (relative to $X$), i.e., $D^c=X\setminus D$.
 \begin{lemma}[cf. {\cite[Lemma 3.1]{Fil}}]\label{Fil-Lem1}
Let there exist a positive definite function $V\in C^1([t_+,\infty)\times D^c,\R)$, where $D^c$ is the complement of some bounded set $D\subset W$ containing $0$ (the origin). Let for each number $T>0$ there exist a set $D_{\scrscr T}\supset D$ and a function $\chi_{\scrscr T}\in C([t_+,\infty)\times (0,\infty),\R)$ such that the following holds:
\begin{enumerate}
 \item $V(t,x)\to\infty$ uniformly in $t$ on each finite interval $[a,b)\subset [t_+,\infty)$ as ${\|x\|\to\infty}$;

 \item for all $t\in[t_+,\infty)$, $x\in D_{\scrscr T}^c$ the inequality
    $$
    V'_{\eqref{ODE-cut}}(t,x)\le \chi_{\scrscr T}(t,V(t,x)),
    $$
    where $V'_{\eqref{ODE-cut}}(t,x)= \dfrac{\partial V}{\partial t}(t,x) +\dfrac{\partial V}{\partial x}(t,x)\cdot F_{\scrscr T}(t,x)$ is the derivative of $V$ along the trajectories of the equation~\eqref{ODE-cut}, is satisfied;

 \item the differential inequality $dv/dt\le \chi_{\scrscr T}(t,v)$ \,($t\in [t_+,\infty)$) does not have positive solutions with finite escape time.
\end{enumerate}
Then every solution of the ODE \eqref{ODE} is global (defined in the future).
 \end{lemma}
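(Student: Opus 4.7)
The plan is to prove global existence for \eqref{ODE} by reducing, via the truncation \eqref{ODE-cut}, to the classical continuation-via-Lyapunov-function theorem \cite[Chapter IV, Theorem XIII]{LaSal-Lef} applied one $T$ at a time. Fix an initial condition $(t_0,x_0)\in [t_+,\infty)\times W$ and let $x(t)$ be the unique maximal forward solution of \eqref{ODE} with $x(t_0)=x_0$, defined on $[t_0,T_{\max})$ with $T_{\max}\le\infty$. I would argue by contradiction, supposing $T_{\max}<\infty$.

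First, pick any $T\ge T_{\max}$. Since $F_{\scrscr T}$ agrees with $F$ on $[t_+,T]\times W$ and inherits continuity and local Lipschitz-ness in $x$ from $F$, the unique maximal forward solution $y(t)$ of \eqref{ODE-cut} with $y(t_0)=x_0$ coincides with $x(t)$ on $[t_0,T_{\max})$; hence it suffices to show that $y(t)$ is defined on all of $[t_0,\infty)$. For this fixed $T$, the three hypotheses of the lemma reproduce exactly the hypotheses of the classical theorem applied to \eqref{ODE-cut}: the positive-definite Lyapunov function $V$, proper uniformly on finite time-intervals by item~1, is in particular defined on $[t_+,\infty)\times D_{\scrscr T}^c$ since $D_{\scrscr T}\supset D$; the estimate $V'_{\eqref{ODE-cut}}(t,x)\le \chi_{\scrscr T}(t,V(t,x))$ holds on $[t_+,\infty)\times D_{\scrscr T}^c$ by item~2; and $dv/dt\le \chi_{\scrscr T}(t,v)$ has no positive solution with finite escape time by item~3. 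The standard comparison argument then runs as follows: were $y(t)$ to blow up at some finite $\tau$, radial properness together with $\|y(t)\|\to\infty$ would force $V(t,y(t))\to\infty$ as $t\to\tau{-}$; yet once $y(t)$ leaves the (implicitly bounded) set $D_{\scrscr T}$ for the last time before $\tau$, the Lyapunov estimate majorizes $V(t,y(t))$ by a positive solution of $dv/dt=\chi_{\scrscr T}(t,v)$, which by item~3 remains finite on $[t_0,\tau]$ --- contradiction. Hence $y(t)$ is global.

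Finally, since $y(t)=x(t)$ on $[t_0,T_{\max})$ and $F_{\scrscr T}=F$ on $[t_+,T_{\max}]\subset [t_+,T]$, the restriction of $y$ to $[t_0,T_{\max}+\delta)$ for some $\delta>0$ small enough that $T_{\max}+\delta\le T$ is a solution of \eqref{ODE} extending $x(t)$ strictly beyond $T_{\max}$, contradicting the maximality of $T_{\max}$. Therefore $T_{\max}=\infty$ and every forward solution of \eqref{ODE} is global. The main obstacle is the middle step: one must correctly parse that the $T$-dependent pair $(D_{\scrscr T},\chi_{\scrscr T})$ plays, for each time horizon $[t_0,T]$, the role of the fixed bounded set and fixed comparison function of the classical theorem. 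This interpretation is legitimate precisely because the truncation $F_{\scrscr T}$ freezes all time-dependence of $F$ past $t=T$, so that estimates uniform in $t\in[t_+,\infty)$ for the truncated system are really the original estimates on the finite horizon, and the classical theorem applied to each fixed truncation transfers back to \eqref{ODE} over any prescribed time interval.
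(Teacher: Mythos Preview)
Your proposal is correct and follows precisely the strategy the paper has in mind: the paper's own proof consists of a single line referring to \cite[Lemma~3.1]{Fil}, and your reduction --- fix $T>T_{\max}$, note that solutions of \eqref{ODE} and \eqref{ODE-cut} agree on $[t_0,T_{\max})$, apply the classical continuation theorem \cite[Chapter~IV, Theorem~XIII]{LaSal-Lef} to the truncated system using the $T$-dependent data $(D_{\scrscr T},\chi_{\scrscr T})$, then extend --- is exactly that argument spelled out. Two small clean-ups: take $T$ strictly larger than $T_{\max}$ so the final extension step goes through without further comment, and your parenthetical ``implicitly bounded'' for $D_{\scrscr T}$ is indeed how the lemma is meant to be read (and is how it is used downstream, with $D_{\scrscr T}=\{\|\omega\|<R_{\scrscr T}\}$).
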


 \begin{proof}
The proof is carried out in the same way as the proof of the lemma~\cite[Lemma~3.1]{Fil}.
 \end{proof}

Let us return to the consideration of the DAE \eqref{DAE}. Recall that it is equivalent to the system \eqref{DAEsysExtDE1}--\eqref{DAEsysExtAE2}.
Introduce the truncation of the function $f(t,x)$ over $t$:
\begin{equation*}
f_{\scrscr T}(t,x):= \begin{cases}
 f(t,x), & t_+ \le t \le T, \\   f(T,x), & t > T,
\end{cases}\quad  \text{$T\ge t_+$ is a parameter}.
\end{equation*}
Then
\begin{equation}\label{UpsilonTrunc}
\Upsilon_{\scrscr T}(t,x):=\begin{pmatrix}
\Es A_{gen}^{(-1)}\big(F_1 f_{\scrscr T}(t,x)-\Es B_{gen} x_{s_1}-\Es B_{und} x_{s_2}\big) \\
\Es A_1^{(-1)} \big(Q_1 f_{\scrscr T}(t,x)-\Es B_1 x_{p_1}\big)  \end{pmatrix}= \begin{cases}
 \Upsilon(t,x), & t_+ \le t \le T, \\  \Upsilon(T,x), & t > T
\end{cases}\;  \text{($T$ is a parameter)}
\end{equation}
is the truncation of the function $\Upsilon(t,x)$ \eqref{Upsilon} over $t$ and a vector consisting of the right-hand sides of the equations  \eqref{DAEsysExtDE1}, \eqref{DAEsysExtDE2} with a truncation, i.e., the equations
\begin{align}
\frac{d}{dt} x_{s_1} &=\Es A_{gen}^{(-1)}[F_1 f_{\scrscr T}(t,x)-\Es B_{gen} x_{s_1}-\Es B_{und} x_{s_2}],  \label{DAEsysDE1Trunc}  \\
\frac{d}{dt}x_{p_1} &=\Es A_1^{(-1)}[Q_1 f_{\scrscr T}(t,x)-\Es B_1 x_{p_1}]. \label{DAEsysDE2Trunc}
\end{align}
The derivative of the function $V\in C^1([t_+,\infty)\times D_{s_1}\times D_{p_1},\R)$  \,(where $D_{s_1}\times D_{p_1}\subset X_{s_1}\times X_1$) along the trajectories of the system \eqref{DAEsysDE1Trunc}, \eqref{DAEsysDE2Trunc} has the form
 \begin{multline}\label{dVDAEsingTrunc}
V'_{\eqref{DAEsysDE1Trunc},\eqref{DAEsysDE2Trunc}}(t,x_{s_1},x_{p_1})= \frac{\partial V}{\partial t}(t,x_{s_1},x_{p_1})+\frac{\partial V}{\partial (x_{s_1},x_{p_1})}(t,x_{s_1},x_{p_1})\cdot \Upsilon_{\scrscr T}(t,x) = \\
= \frac{\partial V}{\partial t}(t,x_{s_1},x_{p_1})+ \frac{\partial V}{\partial x_{s_1}}(t,x_{s_1},x_{p_1})\cdot  \left[\Es A_{gen}^{(-1)}\big(F_1 f_{\scrscr T}(t,x)-\Es B_{gen} x_{s_1}-\Es B_{und} x_{s_2}\big)\right] + \\
+\frac{\partial V}{\partial x_{p_1}}(t,x_{s_1},x_{p_1})\cdot   \left[\Es A_1^{(-1)} \big(Q_1 f_{\scrscr T}(t,x)-\Es B_1 x_{p_1}\big)\right].
 \end{multline}

 \begin{theorem}\label{Th_SingGlobSolTrunc}
Theorems~\ref{Th_SingGlobSol} and \ref{Th_SingGlobSol-Lipsch}  remain valid if condition \ref{ExtensSing} of Theorem~\ref{Th_SingGlobSol} (which must also hold for Theorem \ref{Th_SingGlobSol-Lipsch}) is replaced by the following:
\begin{enumerate} 
\addtocounter{enumi}{2}
\item\label{ExtensSingTrunc} There exists a positive definite function  $V\in C^1([t_+,\infty)\times D_{s_1}\times D_{p_1},\R)$, where a set $D_{s_1}\times D_{p_1}\subset X_{s_1}\times X_1$ is such that  $D_{s_1}\times D_{p_1}\supset \{\|(x_{s_1},x_{p_1})\|\ge R\}$, and  $R>0$ is some number, and for each number $T>0$ there exists a number $R_{\scrscr T}\ge R$ and a function  $\chi_{\scrscr T}\in C([t_+,\infty)\times (0,\infty),\R)$ such that the following holds:
  \begin{enumerate}[label={\upshape(\alph*)},ref={\upshape(\alph*)},topsep=1pt]
  \item\label{ExtensSing1Trunc}   ${V(t,x_{s_1},x_{p_1})\to\infty}$ uniformly in $t$ on each finite interval $[a,b)\subset[t_+,\infty)$ as  ${\|(x_{s_1},x_{p_1})\|\to\infty}$;

  \item\label{ExtensSing2Trunc} for all $(t,x_{s_1}+x_{s_2}+x_{p_1}+x_{p_2})\in L_{t_+}$,  for which $x_{s_2}\in D_{s_2}$ and ${\|(x_{s_1},x_{p_1})\|\ge R_{\scrscr T}}$, the inequality
    \begin{equation}\label{LagrDAEsingTrunc}
  V'_{\eqref{DAEsysDE1Trunc},\eqref{DAEsysDE2Trunc}}(t,x_{s_1},x_{p_1})\le \chi_{\scrscr T}\big(t,V(t,x_{s_1},x_{p_1})\big),
    \end{equation}
    where $V'_{\eqref{DAEsysDE1Trunc},\eqref{DAEsysDE2Trunc}}(t,x_{s_1},x_{p_1})$ has the form \eqref{dVDAEsingTrunc}, holds;

  \item\label{GlobSolvTrunc}  the differential inequality $dv/dt\le \chi_{\scrscr T}(t,v)$   \,($t\in [t_+,\infty)$) does not have positive solutions with finite escape time.
  \end{enumerate}
 \end{enumerate}
 \end{theorem}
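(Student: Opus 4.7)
The plan is to follow the proofs of Theorems~\ref{Th_SingGlobSol} and~\ref{Th_SingGlobSol-Lipsch} almost verbatim, replacing only the single invocation of the classical continuation theorem \cite[Chapter~IV, Theorem~XIII]{LaSal-Lef} by Lemma~\ref{Fil-Lem1}. First I would reduce the DAE~\eqref{DAE} to the system~\eqref{DAEsysExtDE1}--\eqref{DAEsysExtAE2} and use condition~\ref{SoglSing} together with the implicit function theorem (or Lemma~\ref{Lemma-ImplicitFunc}, in the Lipschitz setting) to produce the mapping $x_{p_2}=\eta(t,x_{s_1},x_{s_2},x_{p_1})$ solving~\eqref{DAEsysExtAE1} on $[t_+,\infty)\times X_{s_1}\times D_{s_2}\times X_1$. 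Fixing $(t_0,x_0)\in L_{t_+}$ with $S_2x_0\in D_{s_2}$ and $\phi_{s_2}\in C([t_0,\infty),D_{s_2})$ with $\phi_{s_2}(t_0)=S_2x_0$, I set $q(t,x_{s_1},x_{p_1})=\eta(t,x_{s_1},\phi_{s_2}(t),x_{p_1})$ and substitute into \eqref{DAEsysExtDE1},~\eqref{DAEsysExtDE2} to obtain the reduced ODE~\eqref{DAEsysExtDEeta}, whose right-hand side $\widetilde\Upsilon(t,\omega)$ inherits continuity in $(t,\omega)$ and the requisite regularity in $\omega$ (continuous differentiability or local Lipschitz regularity, depending on the variant) on $[t_0,\infty)\times X_{s_1}\times X_1$ from the hypotheses on $f$. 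Standard theory then yields a unique local solution $\omega(t)$ on some maximal interval $[t_0,\beta)$ satisfying the initial condition~\eqref{DEeta_ini}.

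The only new step is to prove $\beta=\infty$ by applying Lemma~\ref{Fil-Lem1} to~\eqref{DAEsysExtDEeta}. For each $T\ge t_+$ the truncated reduced ODE is obtained by inserting the same substitution $x_{s_2}=\phi_{s_2}(t)$, $x_{p_2}=q(t,x_{s_1},x_{p_1})$ into \eqref{DAEsysDE1Trunc},~\eqref{DAEsysDE2Trunc}. Since truncation affects only the differential block and leaves the algebraic constraint---and hence $\eta$ and $q$---untouched, every point
\begin{equation*}
x=x_{s_1}+\phi_{s_2}(t)+x_{p_1}+q(t,x_{s_1},x_{p_1})
\end{equation*}
lies in $L_{t_+}$ with $S_2x\in D_{s_2}$. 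Consequently, condition~\ref{ExtensSing2Trunc} bounds the Lyapunov derivative along trajectories of the truncated reduced ODE by $\chi_{\scrscr T}(t,V(t,\omega))$ whenever $\|\omega\|\ge R_{\scrscr T}$. Taking $D=\{\|\omega\|<R\}$ and $D_{\scrscr T}=\{\|\omega\|<R_{\scrscr T}\}\supset D$, conditions \ref{ExtensSing1Trunc} and \ref{GlobSolvTrunc} supply the remaining two hypotheses of Lemma~\ref{Fil-Lem1}, so every solution of~\eqref{DAEsysExtDEeta} is global, forcing $\beta=\infty$.

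The reconstruction $x(t)=\omega_{s_1}(t)+\phi_{s_2}(t)+\omega_{p_1}(t)+q(t,\omega_{s_1}(t),\omega_{p_1}(t))$, the verification that $x(t)$ solves~\eqref{DAE},~\eqref{ini} on $[t_0,\infty)$, and the uniqueness argument now carry over verbatim from the proofs of Theorems~\ref{Th_SingGlobSol} and~\ref{Th_SingGlobSol-Lipsch}. I expect the main---and essentially only---obstacle to be the compatibility check in the previous paragraph: one must recognize that $V'_{\eqref{DAEsysDE1Trunc},\eqref{DAEsysDE2Trunc}}$, defined in~\eqref{dVDAEsingTrunc} using the truncated $f_{\scrscr T}$, coincides with the Lyapunov derivative along trajectories of the truncated reduced ODE when restricted to the graph of $q$, and that the hypothesis in~\ref{ExtensSing2Trunc}, phrased in terms of the untruncated consistency manifold $L_{t_+}$, remains applicable because the algebraic block is unaffected by truncation. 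Once this is noted, the substitution of Lemma~\ref{Fil-Lem1} for \cite[Chapter~IV, Theorem~XIII]{LaSal-Lef} is immediate.
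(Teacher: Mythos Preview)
Your plan matches the paper's proof: repeat the argument of Theorems~\ref{Th_SingGlobSol}/\ref{Th_SingGlobSol-Lipsch} and, at the single point where \cite[Ch.~IV, Th.~XIII]{LaSal-Lef} was invoked, apply Lemma~\ref{Fil-Lem1} to the reduced ODE~\eqref{DAEsysExtDEeta} instead.

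One point needs care. The truncation that Lemma~\ref{Fil-Lem1} literally requires for $F=\widetilde\Upsilon$ is $\widetilde\Upsilon_{\scrscr T}(t,\omega)=\widetilde\Upsilon(\min\{t,T\},\omega)$; the paper writes this out explicitly in~\eqref{DAEsysDEetaTrunc}, and for $t>T$ it freezes not only $f$ but also $\phi_{s_2}$ and $q$ at time $T$. Your ``truncated reduced ODE''---obtained by substituting the time-varying $\phi_{s_2}(t)$, $q(t,\cdot)$ into \eqref{DAEsysDE1Trunc},~\eqref{DAEsysDE2Trunc}---is a different vector field for $t>T$, so your check that the substituted point lies on $L_{t_+}$ is correct but refers to the wrong right-hand side for a direct invocation of Lemma~\ref{Fil-Lem1}. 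Either switch to the paper's $\widetilde\Upsilon_{\scrscr T}$, or observe that the proof of Lemma~\ref{Fil-Lem1} only uses that the truncated field agrees with $\widetilde\Upsilon$ on $[t_0,T]$, a property your version also enjoys; with that remark your argument goes through.
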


 \begin{proof}
The proof coincides with the proof of Theorem \ref{Th_SingGlobSol} (or Theorem \ref{Th_SingGlobSol-Lipsch}), except for the part where the existence of a global solution of the ODE \eqref{DAEsysExtDEeta}, i.e.,
 $$
d\omega/dt=\widetilde{\Upsilon}(t,\omega),
$$
is proved. Let us prove this part using the conditions of the present theorem.

As shown in the proof of Theorem \ref{Th_SingGlobSol}  (as well as in the proof of Theorem \ref{Th_SingGlobSol-Lipsch}), there exists a unique solution $\omega=\omega(t)$ of the equation \eqref{DAEsysExtDEeta} on some interval  $[t_0,\beta)$, which satisfies the initial condition \eqref{DEeta_ini}. Let us prove that this solution is global, i.e., the maximal interval of the existence of the solution is  $[t_0,\infty)$.  Recall that $(t_0,x_0)\in L_{t_+}$, where $S_2x_0\in D_{s_2}$, is an arbitrarily chosen initial point and that $\phi_{s_2}\in C([t_0,\infty),D_{s_2})$ is an arbitrarily chosen function with the initial value $\phi_{s_2}(t_0)=S_2 x_0$.

Consider the ODE  \eqref{DAEsysExtDEeta} with a truncation, that is,
\begin{equation}\label{DAEsysDEetaTrunc}
\frac{d}{dt}\omega=\widetilde{\Upsilon}_{\scrscr T}(t,\omega),\quad \text{where}\quad \omega=\begin{pmatrix} x_{s_1} \\ x_{p_1} \end{pmatrix},\;\; \widetilde{\Upsilon}_{\scrscr T}(t,\omega):=\begin{cases}
\widetilde{\Upsilon}(t,\omega), & t_0 \le t \le T, \\  \widetilde{\Upsilon}(T,\omega), & t > T,
\end{cases}\;\;  \text{$T$ is a parameter}.
\end{equation}
Note that  $\widetilde{\Upsilon}_{\scrscr T}(t,\omega):=\begin{cases}
\widetilde{\Upsilon}(t,\omega)= \Upsilon(t,x_{s_1}+\phi_{s_2}(t)+x_{p_1}+q(t,x_{s_1},x_{p_1})), & t_0\le t\le T, \\  \widetilde{\Upsilon}(T,\omega)= \Upsilon(T,x_{s_1}+\phi_{s_2}(T)+x_{p_1}+q(T,x_{s_1},x_{p_1})), & t>T,
\end{cases}$.

We choose a number $R>0$ ($R<\infty$) and a function $V(t,x_{s_1},x_{p_1})$ such that condition \ref{ExtensSingTrunc} of the theorem holds, and introduce the function $V(t,\omega):=V(t,x_{s_1},x_{p_1})$. Due to condition \ref{ExtensSingTrunc}, for each $T>0$ there exists a number $R_{\scrscr T}\ge R$ and a function $\chi_{\scrscr T}\in C([t_+,\infty)\times (0,\infty),\R)$ such that the derivative of $V$ along the trajectories of the equation \eqref{DAEsysDEetaTrunc} satisfies the inequality
 \begin{equation}\label{L1dVdtSingTrunc}
V'_{\eqref{DAEsysDEetaTrunc}}(t,\omega)=\dfrac{\partial V}{\partial t}(t,\omega)+\dfrac{\partial V}{\partial \omega}(t,\omega)\cdot \widetilde{\Upsilon}_{\scrscr T}(t,\omega)\le \chi_{\scrscr T}\big(t,V(t,\omega)\big)
 \end{equation}
for all $t\ge t_0$ and $\|\omega\|\ge R_{\scrscr T}$. Since, by virtue of condition \ref{GlobSolvTrunc}, the differential inequality $dv/dt\le \chi_{\scrscr T}(t,v)$ ($t\in [t_0,\infty)$) does not have positive solutions with finite escape time, then by Lemma~\ref{Fil-Lem1} every solution of \eqref{DAEsysExtDEeta}, including the solution $\omega(t)=(\omega_{s_1}(t)^\T,\omega_{p_1}(t)^\T)^\T$, is global, i.e.,  exists on $[t_0,\infty)$. Thus, what was needed has been proved.
 \end{proof}

 \section{On the choice of the functions $\chi$ and $V$ when checking the conditions of proved theorems}\label{Sect_chi_V}

The proved theorems contain conditions in a general form, and the main difficulty in applying the theorems lies in choosing suitable functions $\chi$ and $V$.

First, consider the function $\chi\in C([t_+,\infty)\times (0,\infty),\R)$ which is present in Theorems \ref{Th_SingGlobSol}, \ref{Th_SingGlobSol-Lipsch}, \ref{Th_SingGlobSolBInv}, \ref{Th_SingUstLagr} and \ref{Th_singNeLagr} on the global solvability, Lagrange stability and Lagrange instability of the DAE, respectively. Let us choose it in the form \eqref{La-kU}, that is,
 \begin{equation*}
\chi(t,v)=k(t)\,U(v),
 \end{equation*}
where $k\in C([t_+,\infty),\R)$ and $U\in C(0,\infty)$, then the conditions of the theorems take the following form:
 \begin{enumerate}
\item 
 In Theorems \ref{Th_SingGlobSol}, \ref{Th_SingGlobSol-Lipsch} and \ref{Th_SingGlobSolBInv} on the global solvability all conditions remain unchanged, except for condition~\ref{ExtensSing} (this condition is contained in all of these  theorems) which takes the form:
  \begin{itemize}
 \item[\ref{ExtensSing}.]  There exists a number $R>0$, a positive definite function  $V\in C^1([t_+,\infty)\times D_{s_1}\times D_{p_1},\R)$, where a set $D_{s_1}\times D_{p_1}\subset X_{s_1}\times X_1$ is such that $D_{s_1}\times D_{p_1}\supset \{\|(x_{s_1},x_{p_1})\|\ge R\}$, and functions ${k\in C([t_+,\infty),\R)}$, $U\in C(0,\infty)$ such that:
    \begin{enumerate}[label={\upshape(\alph*)},ref={\upshape(\alph*)},topsep=1pt]
   \item condition \ref{ExtensSing1} of Theorem \ref{Th_SingGlobSol} holds, i.e.,  ${V(t,x_{s_1},x_{p_1})\to\infty}$ uniformly in $t$ on each finite interval $[a,b)\subset[t_+,\infty)$ as ${\|(x_{s_1},x_{p_1})\|\to\infty}$;
   \item  for all $(t,x_{s_1}+x_{s_2}+x_{p_1}+x_{p_2})\in L_{t_+}$, for which $x_{s_2}\in D_{s_2}$ and ${\|(x_{s_1},x_{p_1})\|\ge R}$, the following inequality holds:
        \begin{equation}\label{LagrDAEsing-kU}
       V'_{\eqref{DAEsysExtDE1},\eqref{DAEsysExtDE2}}(t,x_{s_1},x_{p_1})\le k(t)\, U\big(V(t,x_{s_1},x_{p_1})\big);
        \end{equation}
   \item  $\int\limits_{{\textstyle v}_0}^{\infty}\dfrac{dv}{U(v)} =\infty$\;  ($v_0>0$ is some constant). 
    \end{enumerate}
  \end{itemize}

\item 
  In Theorem \ref{Th_SingUstLagr} on the Lagrange stability  all conditions remain unchanged, except for condition~\ref{LagrSing} which takes the form:
   \begin{itemize}
  \item[\ref{LagrSing}.]  There exists a number $R>0$, a positive definite function  $V\in C^1([t_+,\infty)\times D_{s_1}\times D_{p_1},\R)$, where a set $D_{s_1}\times D_{p_1}\subset X_{s_1}\times X_1$ is such that $D_{s_1}\times D_{p_1}\supset \{\|(x_{s_1},x_{p_1})\|\ge R\}$, and functions ${k\in C([t_+,\infty),\R)}$, $U\in C(0,\infty)$ such that:
    \begin{enumerate}[label={\upshape(\alph*)},ref={\upshape(\alph*)},topsep=1pt]
   \item condition \ref{LagrSing1} of Theorem \ref{Th_SingUstLagr} holds, i.e., ${V(t,x_{s_1},x_{p_1})\to\infty}$  uniformly in $t$ on $[t_+,\infty)$ as ${\|(x_{s_1},x_{p_1})\|\to\infty}$;

   \item for all $(t,x_{s_1}+x_{s_2}+x_{p_1}+x_{p_2})\in L_{t_+}$, for which $x_{s_2}\in D_{s_2}$ and ${\|(x_{s_1},x_{p_1})\|\ge R}$, the inequality \eqref{LagrDAEsing-kU} is satisfied (i.e., condition \ref{ExtensSing2} of Theorem~\ref{Th_SingGlobSol} holds).

   \item $\int\limits_{{\textstyle v}_0}^{\infty}\dfrac{dv}{U(v)} =\infty$,\; $\int\limits_{t_0}^{\infty}k(t)dt<\infty$\; ($t_0\ge t_+$, $v_0>0$ are some numbers).  
   \end{enumerate}
  \end{itemize}

\item 
   In Theorem \ref{Th_singNeLagr} on the Lagrange instability  all conditions remain unchanged, except for condition~\ref{NeUstLagrSing} which takes the form:
 \begin{itemize}
  \item[\ref{NeUstLagrSing}.]   There exists a positive definite function $V\in C^1([t_+,\infty)\times \Omega,\R)$ and functions ${k\in C([t_+,\infty),\R)}$, $U\in C(0,\infty)$ such that:
   \begin{enumerate}[label={\upshape(\alph*)},ref={\upshape(\alph*)},topsep=1pt]
  \item  for all $(t,x_{s_1}+x_{s_2}+x_{p_1}+x_{p_2})\in L_{t_+}$, for which  $x_{s_2}\in D_{s_2}$ and $(x_{s_1},x_{p_1})\in\Omega$, the following inequality holds:
  \begin{equation}\label{NeLagrDAESing-kU}
    V'_{\eqref{DAEsysExtDE1},\eqref{DAEsysExtDE2}}(t,x_{s_1},x_{p_1})\ge k(t)\, U\big(V(t,x_{s_1},x_{p_1})\big);
  \end{equation}

  \item $\int\limits_{{\textstyle v}_0}^{\infty}\dfrac{dv}{U(v)}<\infty$,\; $\int\limits_{t_0}^{\infty}k(t)dt=\infty$\;  ($t_0\ge t_+$, $v_0>0$ are some numbers).   
   \end{enumerate}
  \end{itemize}
 \end{enumerate}

The validity of the theorems with the above changes in the conditions follows directly from the remarks on differential inequalities given in Section~\ref{LaSDiffIneq}. Their proofs can be readily obtain from the proofs of the corresponding original theorems, and therefore they are not given here.  It is clear that the inequalities \eqref{LagrDAEsing} and \eqref{NeLagrDAESing} with the function $\chi$ in the form \eqref{La-kU} take the form \eqref{LagrDAEsing-kU} and \eqref{NeLagrDAESing-kU}.

Recall that $V'_{\eqref{DAEsysExtDE1},\eqref{DAEsysExtDE2}}(t,x_{s_1},x_{p_1})$  has the form \eqref{dVDAEsing}.

 \smallskip
Now, consider the positive definite scalar function $V(t,x_{s_1},x_{p_1})$ which is present in all theorems proved above. This function will be called a \emph{Lyapunov type function}. Let us choose it in the form
\begin{equation}\label{funcV}
V(t,x_{s_1},x_{p_1})=\big((x_{s_1},x_{p_1}),(x_{s_1},x_{p_1})\big)_{H}= \big(H(t)(x_{s_1},x_{p_1}),(x_{s_1},x_{p_1})\big),
\end{equation}
where $H\in C([t_+,\infty),\mathrm{L}(X_{s_1}\times X_1))$ is a positive definite self-adjoint operator function such that $H(t)\big|_{X_{s_1}}\colon X_{s_1}\to X_{s_1}\times \{0\}$ and $H(t)\big|_{X_1}\colon X_1\to \{0\}\times X_1$ for any fixed $t$.  Due to the properties of the operator function $H$,   the function \eqref{funcV} satisfies the conditions of Theorems \ref{Th_SingGlobSol}, \ref{Th_SingGlobSol-Lipsch}, \ref{Th_SingGlobSolBInv}, \ref{Th_SingUstLagr}, \ref{Th_singNeLagr} on the global solvability, Lagrange stability and instability,
and if in addition ${\sup\limits_{t\in [t_+,\infty)}\|H(t)\|<\infty}$, then the function \eqref{funcV} also satisfies the conditions of Theorem \ref{Th_SingDissipat} on the uniform dissipativity, however, of course, the conditions on the derivative $V'_{\eqref{DAEsysExtDE1},\eqref{DAEsysExtDE2}}(t,x_{s_1},x_{p_1})$ in these theorems need to be checked.

The conditions $H(t)\big|_{X_{s_1}}\colon X_{s_1}\to X_{s_1}\times \{0\}$ and $H(t)\big|_{X_1}\colon X_1\to \{0\}\times X_1$ ($t\ge t_+$ is fixed) mean that the pair of subspaces $\{X_{s_1},X_{s_1}\times \{0\}\}$ and the pair of subspaces $\{X_1,\{0\}\times X_1\}$ are invariant under the operator $H(t)\in \mathrm{L}(X_{s_1}\times X_1)$ (for each $t$) and it has the block structure
 \begin{equation}\label{BlockStrH}
H(t)=\begin{pmatrix}
   H_{s_1}(t) & 0   \\
   0   & H_1(t) \end{pmatrix}\colon X_{s_1}\times X_1\to X_{s_1}\times X_1,
 \end{equation}
where $H_{s_1}\in C([t_+,\infty),\mathrm{L}(X_{s_1}))$ and $H_1\in C([t_+,\infty),\mathrm{L}(X_1))$ are positive definite self-adjoint operator functions.  Note that if we identify  $X_{s_1}\times \{0\}$ with  $X_{s_1}$ and $\{0\}\times X_1$ with  $X_1$, i.e., identify $X_{s_1}\times X_1=X_{s_1}\times \{0\}\dot+\{0\}\times X_1$ with  $X_{s_1}\dot+ X_1$ as in Section \ref{BlockStruct}, then  $H(t)$ ($t$ fixed) can be considered as the operator $H(t)\colon X_{s_1}\dot+ X_1\to X_{s_1}\dot+ X_1$.

   \smallskip
If ${H(t)\equiv H\in \mathrm{L}(X_{s_1}\times X_1)}$ is a time-invariant operator, then for all theorems it suffices to require that it be self-adjoint and positive and that the pairs of subspaces $\{X_{s_1},X_{s_1}\times \{0\}\}$ and $\{X_1,\{0\}\times X_1\}$ be invariant under $H$. Then the function \eqref{funcV} takes the form $V(t,x_{s_1},x_{p_1})\equiv V(x_{s_1},x_{p_1})=\big(H(x_{s_1},x_{p_1}),(x_{s_1},x_{p_1})\big)$ and satisfies the conditions of all theorems, except for the conditions on $V'_{\eqref{DAEsysExtDE1},\eqref{DAEsysExtDE2}}(t,x_{s_1},x_{p_1})$ which need to be checked.

For the function $V$ of the form \eqref{funcV} the derivative $V'_{\eqref{DAEsysExtDE1},\eqref{DAEsysExtDE2}}(t,x_{s_1},x_{p_1})$ \eqref{dVDAEsing} takes the form
 \begin{multline}\label{VderivDAE}
V'_{\eqref{DAEsysExtDE1},\eqref{DAEsysExtDE2}}(t,x_{s_1},x_{p_1})=
 \Big(\frac{d}{dt}H(t)(x_{s_1},x_{p_1}),(x_{s_1},x_{p_1})\Big)+
   2\Big( H(t)(x_{s_1},x_{p_1}),\Upsilon(t,x)\Big) =  \\
 =\Big(\frac{d}{dt}H(t)(x_{s_1},x_{p_1}),(x_{s_1},x_{p_1})\Big)+
   2\Big(H_{s_1}(t)x_{s_1},\left[\Es A_{gen}^{(-1)}\big(F_1 f(t,x)-\Es B_{gen} x_{s_1}-\Es B_{und} x_{s_2}\big)\right] \Big) +  \\
  +2\Big(H_1(t)x_{p_1}, \left[\Es A_1^{(-1)} \big(Q_1 f(t,x)-\Es B_1 x_{p_1}\big)\right]\Big),
 \end{multline}
where $H_{s_1}(t)$, $H_1(t)$ are operators defined in \eqref{BlockStrH}, and $\Upsilon(t,x)$ has the form \eqref{Upsilon}.

   \section{Isothermal models of gas networks in the form of DAEs}\label{Sect_GasNetDAE}

 \subsection{A model of a gas flow for a single pipe (in the isothermal case)}\label{Sect_GasDAEsingle}

Consider a mathematical model of a gas flow for a single pipe, assuming that the temperature is constant. The model consists of the \emph{isothermal Euler equations} which we write in the form (see, e.g., \cite[(ISO1),~p.12]{GNM}, \cite[p.~27]{GNM})
 \begin{align}
& \partial_t\rho +\partial_x(\rho v) = 0, \label{ISO1.1} \\
& \partial_t(\rho v) +\partial_x(p+\rho v^2) = -\dfrac{\lambda_{fr}}{2D}\rho v|v|-g\rho\, \dfrac{dh}{dx}, \label{ISO1.2}
 \end{align}
where $x\in [0,L]$, $L<\infty$ is the length of the pipe, and $t\in \mathcal{I}\subset[0,\infty)$, $\mathcal{I}$ is the time interval, and \emph{the equation of state} for a gas for the constant temperature (the isothermal case):
 \begin{equation}\label{ISGE}
p=R_s T_0 \rho z(p).
 \end{equation}
Here $\rho=\rho(t,x)$ denotes the density, $v=v(t,x)$ is the velocity of the gas, $p=p(t,x)$ is the pressure, $g$ is the gravitational acceleration, $D$ is the pipe diameter, $\lambda_{fr}$ is the pipe friction coefficient, $T=T_0$ is the temperature (the surface temperature of the pipe wall),  $R_s$ is the specific gas constant, $z=z(p)$ is the compressibility factor, and $h=h(x)$ is the height profile of the pipe over ground  (see, e.g.,\cite[p.4]{GNM} and references therein).

A special model to describe the compressibility factor, which is used by the American Gas Association (AGA) and is a good approximation for pressures up to 70 bar, has the form $z(p)=1+\alpha p$,  where $\alpha$ is a certain constant \cite[p.5]{GNM}. Then $p=R_s T_0 \rho/(1-\alpha R_s T_0 \rho)$ that is one of the commonly used equation of state in the isothermal case.

In what follows, we denote the slope $\dfrac{dh}{dx}(x)$ of the pipe by $\dfrac{dh}{dx}(x)=s_{lope}(x)$ (cf. \cite{GU2018, MartinMM05}).

When modelling the dynamics of a gas flow, the assumption $(\rho v^2)_x=0$ (i.e., we assume that this term is negligibly small) can be used (see, e.g., \cite{KSSTW22, BandaHerty, MartinMM05}) in order to simplify the model, then we obtain the gas dynamics equations in the form \cite[(ISO2),~p.12]{GNM} (the similar system is used in \cite{KSSTW22}):
\begin{equation}\label{ISO2}
 \begin{split}
 &\partial_t\rho+ \partial_x(\rho v) = 0, \\
 &\partial_t(\rho v)+ \partial_x p =-\dfrac{\lambda_{fr}}{2D}\rho v|v|- g\rho\,s_{lope}
 \end{split}
\end{equation}
with the same equation of state \eqref{ISGE}. The equations  \eqref{ISO2} are often referred to as a semilinear model of the gas flow dynamics \cite{KSSTW22,GNM}.

The similar model for a gas flow in a pipeline was presented in  \cite{BandaHerty}, where it was used the isothermal Euler equations with the equation of state in the form $p=c^2 \rho$, where $c=Z R T_0/M_g$ was assumed to be a certain constant, $Z$ is the natural gas compressibility factor, $R$ is the universal gas constant, $T$ is the absolute gas temperature and $M_g$ is the gas molecular weight.

In \cite[p.26]{GNM} and \cite[p.2,3]{KSSTW22},  $q$ denotes a mass flow and it is defined as $q=S \rho v$, where $S$ is the cross-sectional area of a pipe.
We will denote by $q:=\rho v$ a mass flow by the cross-sectional area equal to 1, in order not to introduce additional notation, and assume that the total mass flow is $\tilde{q}=q S$. Also, we will assume that the directions of gas flows in pipes are known.
Then the system of the isothermal Euler equations \eqref{ISO2} and the gas state equation \eqref{ISGE} takes the form
 \begin{align}
\partial_t\rho+\partial_x q &= 0, \label{ISO2.1} \\
\partial_t q +\partial_x p +\rho\,g s_{lope} &=-\dfrac{\lambda_{fr}}{2 D} q^2\rho^{-1}, \label{ISO2.2}  \\
p &=R_s T_0\rho z(p).  \label{SGE}
 \end{align}
Further, we assume that $s_{lope}(x)\equiv\sin \theta$, where the parameter $\theta$ denotes the angle of the pipe slope (cf. \cite{GNM, KSSTW22}), and discretize the equations \eqref{ISO2.1}, \eqref{ISO2.2} in the phase variable (in space), using a scheme similar to the topology-adapted discretization scheme from \cite{BGHHST,Huc18}, and obtain the spatially discretized equations
 \begin{align}
\dfrac{d\rho_r}{dt}+ \dfrac{q_r-q_l}{L} &= 0, \label{ISO2.1discr} \\
\dfrac{d q_l}{dt}+\dfrac{p_r-p_l}{L} +\rho_r\,g \sin\theta &=-\dfrac{\lambda_{fr}}{2 D} \frac{q_l^2}{\rho_r}, \label{ISO2.2discr}  \\
p_r &=R_s T_0\rho_r z(p_r).  \label{SGEdiscr}
 \end{align}
where $q_r(t):=q(t,L)$, $p_r(t):=p(t,L)$, $\rho_r(t):=\rho(t,L)$ and $q_l(t):=q(t,0)$, $p_l(t):=p(t,0)$. If we represent the pipe as a graph consisting of an edge and two vertices (nodes), define the vertices as the left and right nodes and fix the edge orientation from the left node to the right node, then $q_r(t)$, $p_r(t)$ and $\rho_r(t)$ are defined at the right end of pipe and $q_l(t)$, $p_l(t)$ are defined at the left end of pipe. In general, previously, the pipe is divided into parts of a short length through the introduction of artificial nodes and the specified spatial discretization are performed on each part (subpipe).

Suppose that the functions $q_r$ and $p_l$ are given, that is, we consider the boundary conditions of the form
 \begin{equation}\label{bond_ISO2}
q(t,L)=q_r(t),\quad p(t,0)=p_l(t),\qquad  t\in\mathcal{I}.
 \end{equation}
The functions  $p_r$, $\rho_r$ and $q_l$ need to be found.

We introduce the variable vector $x=(\rho_r,q_l,p_r)^\T$ (we denote it by $x$ for convenience and comparison with further results, since the original variable $x$ is already absent from the equations) and denote
\begin{equation}\label{ModPipeDiscret}
A=\begin{pmatrix} 1&0&0 \\ 0&1&0 \\ 0&0&0 \end{pmatrix},\,
B=\begin{pmatrix} 0& -L^{-1}& 0 \\ g\sin\theta& 0& L^{-1} \\ 0&0&1 \end{pmatrix},\,
f(t,x)=\begin{pmatrix} -L^{-1}q_r(t) \\ L^{-1}p_l(t) -0.5\lambda_{fr} D^{-1}q_l^2\rho_r^{-1} \\ R_s T_0\, \rho_r z(p_r) \end{pmatrix}.
\end{equation}
Then the system \eqref{ISO2.1discr}--\eqref{SGEdiscr} can be written in the vector form
 \begin{equation}\label{DAE_ISO2discr}
\dfrac{d}{dt}[Ax]+Bx=f(t,x),\qquad t\in\mathcal{I},
 \end{equation}
where $A,\, B\in\R^{3\times 3}$ and $f\in C(\mathcal{I}\times \R^3,\R^3)$.
The initial condition for \eqref{DAE_ISO2discr} can be given as
 \begin{equation}\label{ini_ISO2}
x(t_0)=x_0,\qquad x_0=(\rho_r^0,q_l^0,p_r^0)^\T.
 \end{equation}
where $\rho_r^0$ and $p_r^0$ have to satisfy the equation \eqref{SGEdiscr} for $t=t_0$, i.e.,  $p_r^0=R_s T_0\rho_r^0 z(p_r^0)$.

In general, the DAE \eqref{DAE_ISO2discr} is regular (since the pencil $\lambda A+B$ is regular), but if any of the input parameters (i.e., $q_r(t)$ or $p_l(t)$) is not specified, then the system \eqref{ISO2.1discr}--\eqref{SGEdiscr} is underdetermined and the corresponding DAE is singular (nonregular). Also, if it is required to realize the evolution of some variable (i.e., $p_r$, or $\rho_r$, or $q_l$) such that it becomes equal to the prescribed function, then this system is overdetermined and the corresponding DAE is singular.

 \subsection{A model of a gas network (in the isothermal case)}\label{Sect_GasDAEnetwork}

Now, consider a mathematical model of a gas network, where a gas flow in each pipe described by a system of the type  \eqref{ISO2.1}, \eqref{ISO2.2}, \eqref{SGE}. In addition to pipes, the gas network also includes valves, regulators and compressors.

Following \cite{GNM}, \cite{KSSTW22}, we describe a gas network as oriented connected graph $G=(\mV,\mE)$, where $\mV$ denotes a set of nodes (vertices), $\mE$ denotes a set of edges, and each edge joins two distinct nodes (i.e., there are no self-loops). We fix the orientation of edge $e\in\mE$, denoting its endpoints by $v_l$ and $v_r$ and assuming that the edge is oriented from the left node $v_l$ to the right node $v_r$. 
Note that the orientation of the edge may not coincide with the direction of a gas flow.

We collect all nodes with a fixed pressure in $\mV_{pset}$ and refer to them as \emph{pressure nodes} \cite{KSSTW22,BGHHST}. Fixed pressure means the existence of a time-dependent function chosen in advance, which yields the respective pressure value at each point in time.
All other nodes we collect in $\mV_{qset}$. Accordingly, $\mV= \mV_{pset}\cup \mV_{qset}$.

We denote the sets of edges corresponding to the pipes, valves and regulating elements (regulators and compressors) by $\mE_{pip}$, $\mE_{val}$ and $\mE_{reg}$, respectively. Thus, $\mE= \mE_{pip}\cup\mE_{val}\cup \mE_{reg}$.

First, introduce the vector $p$ of the pressures of nodes $u\in\mV_{pset}$, and the vectors $q_{pip,r}$, $q_{pip,l}$, $q_{val}$ and $q_{reg}$ of flows at the right ends of pipes, at the left ends of pipes, through valves and through regulating elements, respectively. 

As mentioned above, at the pressure nodes $u\in\mV_{pset}$, the pressure function $p^{set}(t)=(\ldots,p^{set}_u(t),\ldots)^\T_{u\in \mV_{pset}}$  is given. 
At the nodes $u\in\mV_{qset}=\mV\setminus \mV_{pset}$ (which include junction, demand and source nodes), the function $q^{set}(t)=(\ldots,q^{set}_u(t),\ldots)^\T_{u\in \mV_{qset}}$, which specifies the relationships between the flows $q_{pip,r}$, $q_{pip,l}$, $q_{val}$ and $q_{reg}$ in a Kirchhoff-type flow balance equation (see \eqref{KSSTW-5} below), is given.

The mathematical model of a gas network consisting of pipes, valves, regulators and compressors after applying spatial discretization (more precisely, a topologically adaptive discretization of the isothermal Euler equations for pipes and pipelines \cite{Huc18}, \cite{BGHHST}) has the form \cite[(9), p.7]{KSSTW22}:
 \begin{align}
A_{pip,r}^\T \dfrac{d}{dt}\phi(p)+D_q(q_{pip,r}-q_{pip,l})&=0, \label{KSSTW-1} \\
\dfrac{d}{dt}q_{pip,l}+D_p(A_{pip,r}^\T+A_{pip,l}^\T)p+f_{pip}(p,q_{pip,l},t)&=0, \label{KSSTW-2}
\\
D_{val}\dfrac{d}{dt}q_{val}+f_{val}(p,q_{val},t)&=0, \label{KSSTW-3}
\\
D_{reg}\dfrac{d}{dt}q_{reg}-f_{reg}(p,q_{reg},t)&=0, \label{KSSTW-4}
\\
A_{pip,l}q_{pip,l}+A_{pip,r}q_{pip,r}+A_{val}q_{val}+A_{reg}q_{reg}&=q^{set}(t), \label{KSSTW-5}
\\
f_{pb}(p)&=0, \label{KSSTW-6}
\\
f_{qb}(q_{pip,l},q_{pip,r},q_{val},q_{reg})&=0, \label{KSSTW-7}
 \end{align}
where $A_{pip,l}:=\big(a_{ij}^{pip,l}\big)_{\begin{subarray}{l} i=1,...,|\mV_{qset}|,\\ j=1,...,|\mE_{pip}|\end{subarray}}$,   $A_{pip,r}:=\big(a_{ij}^{pip,r}\big)_{\begin{subarray}{l} i=1,...,|\mV_{qset}|,\\ j=1,...,|\mE_{pip}|\end{subarray}}$, $A_{val}:=\big(a_{ij}^{val}\big)_{\begin{subarray}{l} i=1,...,|\mV_{qset}|,\\ j=1,...,|\mE_{val}|\end{subarray}}$ and $A_{reg}:=\big(a_{ij}^{reg}\big)_{\begin{subarray}{l} i=1,...,|\mV_{qset}|,\\ j=1,...,|\mE_{reg}|\end{subarray}}$ are constant incidence matrices with the entries
 \begin{align*}
& a_{ij}^{pip,l}= \begin{cases} -1 & \text{if node $i$ is the left node of pipe $j$}, \\  0 & \text{else},  \end{cases} \quad
 a_{ij}^{pip,r}= \begin{cases} 1 & \text{if node $i$ is the right node of pipe $j$}, \\ 0 & \text{else}, \end{cases}  \\
& a_{ij}^{val}= \begin{cases} -1 & \text{if node $i$ is the left node of valve $j$}, \\ 1 & \text{if node $i$ is the right node of valve $j$}, \\ 0 & \text{else},  \end{cases}      \\
& a_{ij}^{reg}= \begin{cases} -1 & \text{if node $i$ is the left node of  regulating element $j$}, \\ 1 & \text{if node $i$ is the right node of regulating element $j$}, \\ 0 & \text{else},  \end{cases}  \\
 \end{align*}
$D_q:=\diag\{...,\frac{\kappa_e}{L_e},...\}_{e\in\mE_{pip}}$,
$D_p:=\diag\{...,\frac{S_e}{L_e},...\}_{e\in\mE_{pip}}$, $D_{val}:=\diag\{...,\mu_e,...\}_{e\in\mE_{val}}$ and $D_{reg}:=\diag\{...,\mu_e,...\}_{e\in\mE_{reg}}$ are constant diagonal matrices, where $\mu_e\ge 0$, $\kappa_e=R_s T_0/S_e$  \,(as above, $T_0=const$ is the temperature and $R_s$ is the specific gas constant), $S_e$ and $L_e$ are the cross-sectional area and the length of pipe $e$, respectively. Here $p$, $q_{pip,r}$, $q_{pip,l}$, $q_{val}$ and $q_{reg}$ are unknown and the remaining functions and parameters are given.  $f_{pip}(p,q_{pip,l},t)$, $f_{val}(p,q_{val},t)$ and $f_{reg}(p,q_{reg},t)$ are functions specified in \cite[(4),(5),(8), p.5,6,7]{KSSTW22}; $f_{pb}(p)$ and $f_{qb}(q_{pip,l},q_{pip,r},q_{val},q_{reg})$ are given continuous functions (see \cite{KSSTW22} for details).

Note that the elements of $\phi(p)=\begin{pmatrix}\vdots \\\varphi(p_u) \\\vdots\end{pmatrix}_{u\in\mV_{qset}}$ from \eqref{KSSTW-1} are expressed as $\varphi(p)=\dfrac{p}{z(p)}$ \cite[p.2,5]{KSSTW22}, $p=p_u$, $u\in \mV_{qset}$, where the function $\varphi(p)$ can be also derived from the equation of state for a real gas (in the isothermal case)  $p=R_s T_0 \rho z(p)$ \eqref{ISGE}, i.e., $\varphi(p)=R_s T_0 \rho$.  Thus, we introduce an additional variable $\varrho=\begin{pmatrix}\vdots \\\rho_u \\\vdots\end{pmatrix}_{u\in \mV_{qset}}$, and instead of \eqref{KSSTW-1}, we use the system
\begin{align*}
 A_{pip,r}^\T \dfrac{d}{dt}\varrho+D_q(q_{pip,r}-q_{pip,l})&=0, 
\\ \varrho &=\phi(p), 
\end{align*}
which is equivalent to \eqref{KSSTW-1}, taking into account the coefficient $\kappa_e$, and also rewrite the function $f_{pip}(p,q_{pip,l},t)$ (this function also include $\phi(p)$ \cite[p.~4,~5]{KSSTW22}), without changing its notation, as $f_{pip}(\varrho,q_{pip,l},t)$.

Finally, we obtain the following differential-algebraic system describing the gas network:
\begin{align}
A_{pip,r}^\T \dfrac{d}{dt}\varrho+D_q(q_{pip,r}-q_{pip,l})&=0, \label{GN_DAE-1-1} \\
\dfrac{d}{dt}q_{pip,l}+D_p(A_{pip,r}^\T+A_{pip,l}^\T)p&= -f_{pip}(\varrho,q_{pip,l},t), \label{GN_DAE-2} \\
D_{val}\dfrac{d}{dt}q_{val}&=-f_{val}(p,q_{val},t), \label{GN_DAE-3} \\
D_{reg}\dfrac{d}{dt}q_{reg}&=f_{reg}(p,q_{reg},t), \label{GN_DAE-4} \\
A_{pip,l}q_{pip,l}+A_{val}q_{val}+A_{reg}q_{reg}+A_{pip,r}q_{pip,r}&=q^{set}(t), \label{GN_DAE-5} \\
\varrho&=\phi(p), \label{GN_DAE-1-2} \\
0&=f_{pb}(p), \label{GN_DAE-6} \\
0&=f_{qb}(q_{pip,l},q_{pip,r},q_{val},q_{reg}). \label{GN_DAE-7}
 \end{align}

Denote
\begin{align}
& x=\begin{pmatrix}\varrho\\ q_{pip,l}\\ q_{val}\\ q_{reg}\\ q_{pip,r}\\ p \end{pmatrix}\!,\;
f(t,x)=\begin{pmatrix}
 0        \\
 -f_{pip}(\varrho,q_{pip,l},t),  \\
 -f_{val}(p,q_{val},t)   \\
 f_{reg}(p,q_{reg},t),   \\
 q^{set}(t)\\
 \phi(p)   \\
 f_{pb}(p)   \\
 f_{qb}(q_{pip,l},q_{pip,r},q_{val},q_{reg})  \end{pmatrix}\,,\;
A=\begin{pmatrix}
 A_{pip,r}^\T& 0& 0& 0& 0& 0  \\
 0& I& 0& 0& 0& 0 \\
 0& 0& D_{val}&0 &0 & 0 \\
 0& 0& 0& 0& D_{reg}& 0 \\
 0& 0& 0& 0& 0& 0 \\
 0& 0& 0& 0& 0& 0 \\
 0& 0& 0& 0& 0& 0 \\
 0& 0& 0& 0& 0& 0 \end{pmatrix}\!,  \nonumber \\
& B=\begin{pmatrix}
 0& -D_q& 0& 0& D_q& 0   \\
 0& 0& 0& 0& 0& D_p(A_{pip,r}^\T+A_{pip,l}^\T)  \\
 0& 0& 0&0 &0 & 0    \\
 0& 0& 0& 0& 0& 0    \\
 0& A_{pip,l}& A_{val}& A_{reg}& A_{pip,r}& 0   \\
 I& 0& 0& 0& 0& 0    \\
 0& 0& 0& 0& 0& 0    \\
 0& 0& 0& 0& 0& 0  \end{pmatrix}\!.   \label{GN_DAE}
\end{align}
Then the system \eqref{GN_DAE-1-1}--\eqref{GN_DAE-7} takes the form of the singular (nonregular) DAE (see the definition in Section~\ref{Preliminaries})
 \begin{equation}\label{sGN-DAE} 
\frac{d}{dt}[Ax]+Bx(t)=f(t,x),
 \end{equation}
where $A,\, B\in \R^{m\times n}$ and $f\in C(\mathcal{I}\times \R^n,\R^m)$.

The initial condition for the DAE \eqref{sGN-DAE} has the form  
 \begin{equation}\label{sGN-ini}
x(0)=x_0,
 \end{equation}
where $x_0=(\varrho^0,q_{pip,l}^0,q_{val}^0,q_{reg}^0,q_{pip,r}^0,p^0)^\T$ is chosen so that the values $t_0$, $x_0$ satisfy all algebraic equations in the system \eqref{GN_DAE-1-1}--\eqref{GN_DAE-7} (or satisfy the consistency condition defined in Remark \ref{RemConsistIni}).

In \cite{KSSTW22}, the vector form of the DAE corresponding to the system \eqref{KSSTW-1}--\eqref{KSSTW-7} is slightly different from the above, but, in general, it is also a nonregular  DAE in the sense that the number of unknowns is not equal to the number of equations. However, in \cite{KSSTW22}, it is mentioned  that with a proper choice of the directions of pipe and some additional conditions to the positions of regulators and valves (as described in, e.g., \cite{Huc18}), the resulting DAE system will have index 1 that means it will be a regular as well.

The model for a gas network in the form of a nonregular DAE of the type \eqref{sGN-DAE} are also obtained in \cite{A-PerJ}.

 \section{Analysis of a singular (nonregular) semilinear DAE with the characteristic pencil of the rank $\rank(\lambda A+B) <n,m$}\label{ExGener}

In this section, we consider a simple example which demonstrates the application of the obtained results.

Consider the singular semilinear DAE (a DAE of the form \eqref{DAE})
 \begin{equation}\label{DAEgenSing}
\frac{d}{dt}[Ax]+Bx=f(t,x),
 \end{equation}
where $t\in [t_+,\infty)$ ($t_+\ge 0$), $x=(x_1,x_2,x_3)^\T\in \R^3$, a function $f(t,x)=(f_1(t,x),f_2(t,x),f_3(t,x))^\T\in C([t_+,\infty)\times \R^3,\R^3)$ has the continuous partial derivative $\dfrac{\partial f}{\partial x}$ on $[t_+,\infty)\times \R^3$, and $A,\, B\in \mathrm{L}(\Rn,\Rm)$ \,(here the terminology from Section \ref{BlockStruct} is used), $n=m=3$,  are the operators to which the matrices
\begin{equation}\label{pencilGener}
A=\begin{pmatrix} 1 & 0 & -1 \\
                  0 & 0 & 0 \\
                  0 & 0 & 0 \end{pmatrix},\quad
B=\begin{pmatrix} 1 & -1 & -1 \\
                  1 & 1  & -1 \\
                  0 & 2 & 0 \end{pmatrix}
\end{equation}
correspond with respect to the standard bases in $\Rn$, $n=3$, and $\Rm$, $m=3$. As usual, a basis in $\R^k$ is standard if the $i$th coordinate of the basis vector $e_j$ ($j=1,...,k$) is equal to $\delta_{ij}$.

The pencil $\lambda A+B$ of the operators \eqref{pencilGener} is singular and its rank equals ${\rank(\lambda A+B)=2}$.

Generally, in this section we consider the matrices corresponding to the operators (from $\R^3$ into $\R^3$) with respect to the standard bases in $\R^3$ (as well as we consider the coordinates of vectors with respect to the standard basis in $\R^3$), and if the bases are different, then this will be explicitly indicated.

The singular pencil \eqref{pencilGener} was studied in \cite[Section~4.4]{Fil.KNU2019}. In \cite{Fil.KNU2019}, it is shown that the subspaces from the decomposition \eqref{Xssrr} where $n=3$, i.e., $\R^3=X_s\dot +X_r =X_{s_1}\dot +  X_{s_2}\dot +  X_1 \dot +X_2$, and from the decomposition \eqref{Yssrr} where $m=3$, i.e., $\R^3=Y_s\dot +Y_r =Y_{s_1}\dot + Y_{s_2}\dot + Y_1\dot +Y_2$, can be represented as
\begin{gather*}
X_s = X_{s_1}\dot +X_{s_2}=Lin\{s_i\}_{i=1}^2,\; {X_{s_1}=Lin\{s_1\}},\; X_{s_2}=Lin\{s_2\},\; X_r = Lin\{p\},\; X_1=\{0\},\; X_2 = X_r,  \\ 
Y_s = Y_{s_1}\dot +  Y_{s_2}= Lin\{l_i\}_{i=1}^2,\; {Y_{s_1} = Lin\{l_1\}},\; Y_{s_2} = Lin\{l_2\},\; Y_r = Lin\{q\},\; Y_1=\{0\},\; Y_2=Y_r, 
\end{gather*}
where
 \begin{equation}\label{BasisVectors}
s_1 =\begin{pmatrix} 1 \\ 0 \\ 0 \end{pmatrix},\,
s_2 =\begin{pmatrix} 1 \\ 0 \\ 1 \end{pmatrix},\,
p =\begin{pmatrix} 0 \\ 1 \\ 0 \end{pmatrix},\;
l_1 =\begin{pmatrix} 1 \\ 0 \\ 0 \end{pmatrix},\,
l_2 =\begin{pmatrix} 0 \\ 1 \\ 0 \end{pmatrix},\;
q =\begin{pmatrix} -1/2 \\ 1/2 \\ 1 \end{pmatrix},
 \end{equation}
and that the projection matrices corresponding to the projectors $S\colon \R^3\to X_s$, $S=S_1+S_2$, $S_i\colon \R^3\to X_{s_i}$, $F\colon \R^3\to Y_s$, $F=F_1+F_2$,  $F_i\colon\R^3\to Y_{s_i}$,  $P\colon \R^3\to X_r$, $P=P_1+P_2$, $P_i\colon \R^3\to X_i$,  $Q\colon \R^3\to Y_r$, $Q=Q_1+Q_2$, $Q_i\colon\R^3\to Y_i$, $i=1,2$, which are defined in \eqref{ProjRS}, \eqref{ProjS} and \eqref{ProjR},  have the form
 \begin{equation}\label{ProjGenSing}
 \begin{split}
S_1=\begin{pmatrix}
     1  & 0 & -1  \\
     0  & 0 & 0 \\
     0 & 0 & 0  \end{pmatrix}, \,
S_2= \begin{pmatrix}
        0 & 0 & 1  \\
        0 & 0 & 0  \\
        0 & 0 & 1   \end{pmatrix},\,
S=\begin{pmatrix}
     1 & 0 & 0  \\
     0 & 0 & 0 \\
     0 & 0 & 1 \end{pmatrix},\,
P= \begin{pmatrix}
        0 & 0 & 0 \\
        0 & 1 & 0 \\
        0 & 0 & 0  \end{pmatrix},\, P_1=0,\, P_2=P, \\
F_1 =\begin{pmatrix}
     1   & 0 & 1/2 \\
     0   & 0 & 0 \\
     0 & 0 & 0  \end{pmatrix},\,
F_2 =\begin{pmatrix}
     0 & 0 & 0  \\
     0 & 1 & -1/2 \\
     0 & 0 & 0  \end{pmatrix},\,
F = \begin{pmatrix}
     1   & 0 & 1/2 \\
     0   & 1 & -1/2 \\
     0 & 0 & 0  \end{pmatrix},\,
Q = \begin{pmatrix}
     0 & 0 & -1/2 \\
     0 & 0 & 1/2  \\
     0 & 0 & 1    \end{pmatrix},\\
Q_1=0,\, Q_2=Q.
  \end{split}
 \end{equation}

Note that if in $\R^3$, instead of the standard basis, we take the basis which is the union of the bases of the summands from the decomposition \eqref{Xssrr} of the space $\Rn$ where $n=3$, i.e., we take the vectors $s_1$, $s_2$, $p$ defined in \eqref{BasisVectors}, then the matrices corresponding to the projectors $S$, $P$, $S_i$, $P_i$, $i=1,2$, with respect to the basis $s_1$, $s_2$, $p$ in $\R^3$ will have a simple form (and will be self-adjoint):
\begin{equation*}
S_1=\begin{pmatrix}
     1  & 0 & 0  \\
     0  & 0 & 0 \\
     0 & 0 & 0  \end{pmatrix}, \,
S_2= \begin{pmatrix}
        0 & 0 & 0  \\
        0 & 1 & 0  \\
        0 & 0 & 0   \end{pmatrix},\,
S=\begin{pmatrix}
     1 & 0 & 0  \\
     0 & 1 & 0 \\
     0 & 0 & 0 \end{pmatrix},\,
P= \begin{pmatrix}
        0 & 0 & 0 \\
        0 & 0 & 0 \\
        0 & 0 & 1  \end{pmatrix},\,
P_1=0,\, P_2=P,
\end{equation*}
Similarly, if in $\R^3$, instead of the standard basis, we take the basis which is the union of the bases of the summands from the decomposition  \eqref{Yssrr} of the space $\Rm$ where $m=3$, i.e., we take $l_1$, $l_2$, $q$ defined in \eqref{BasisVectors}, then the matrices corresponding to the projectors $F$, $Q$, $F_i$, $Q_i$, $i=1,2$, with respect to the basis $l_1$, $l_2$, $q$ in $\R^3$ will have a simple form (and will be self-adjoint):
\begin{equation*}
F_1 =\begin{pmatrix}
     1  & 0 & 0 \\
     0  & 0 & 0 \\
     0 & 0 & 0  \end{pmatrix},\,
F_2 =\begin{pmatrix}
     0 & 0 & 0  \\
     0 & 1 & 0 \\
     0 & 0 & 0  \end{pmatrix},\,
F = \begin{pmatrix}
     1   & 0 & 0 \\
     0   & 1 & 0 \\
     0 & 0 & 0  \end{pmatrix},\,
Q = \begin{pmatrix}
     0 & 0 & 0 \\
     0 & 0 & 0  \\
     0 & 0 & 1    \end{pmatrix},\,
Q_1=0,\, Q_2=Q.
\end{equation*}
However, in this case, the operators $A,\, B$ as the operators from $\Rn$ into $\Rm$, where $n=m=3$, and in general the DAE \eqref{DAEgenSing}, \eqref{pencilGener}, must be considered with respect to the new bases $s_1$, $s_2$, $p$ in $\Rn$ and $l_1$, $l_2$, $q$ in $\Rm$ ($n=m=3$). In what follows, we continue to use the standard bases in $\R^3$ (in $\Rn$, $\Rm$, $n=m=3$).

As shown in \cite{Fil.KNU2019}, the matrices (with respect to the standard bases in $\R^3$)
\begin{gather*}
\EuScript A_r=0,\; \EuScript A_{gen}=A,\; \EuScript B_{und}=0, \\
\EuScript B_r=\begin{pmatrix}
        0 & -1 & 0 \\
        0 & 1 & 0  \\
        0 & 2 & 0  \end{pmatrix},\;
\EuScript B_{gen}=\begin{pmatrix}
        1 & 0 & -1 \\
        0 & 0 & 0  \\
        0 & 0 & 0  \end{pmatrix},\;
\EuScript B_{ov}= \begin{pmatrix}
        0 & 0 & 0 \\
        1 & 0 & -1  \\
        0 & 0 & 0  \end{pmatrix},\;
\EuScript A_{gen}^{(-1)}= \begin{pmatrix}
               1 & 0 & 1/2 \\
               0 & 0 & 0  \\
               0 & 0 & 0  \end{pmatrix}
\end{gather*}
correspond to the operators $\EuScript A_r$, $\EuScript B_r$, $\EuScript A_{gen}$, $\EuScript B_{gen}$, $\EuScript B_{und}$, $\EuScript B_{ov}$  introduced in \eqref{AsrBsrExtend}, \eqref{ABssExtend} and to the semi-inverse operator $\EuScript A_{gen}^{(-1)}$ defined in Remark~\ref{Rem_InvAgen}. Accordingly,  $A_{gen}=\EuScript A_{gen}\big|_{X_{s_1}}\in \mathrm{L}(X_{s_1},Y_{s_1})$, $B_{gen}=\EuScript B_{gen}\big|_{X_{s_1}} \in \mathrm{L}(X_{s_1},Y_{s_1})$, $B_{und}=\EuScript B_{und}\big|_{X_{s_2}}\in \mathrm{L}(X_{s_2},Y_{s_1})$, $B_{ov}=\EuScript B_{ov}\big|_{X_{s_1}}\in \mathrm{L}(X_{s_1},Y_{s_2})$, $A_r=\EuScript A_r \big|_{X_r}\in \mathrm{L}(X_r,Y_r)$, $B_r=\EuScript B_r\big|_{X_r}\in \mathrm{L}(X_r,Y_r)$ and $A_{gen}^{-1}=\EuScript A_{gen}^{(-1)}\big|_{Y_{s_1}}\in \mathrm{L}(Y_{s_1},X_{s_1})$. Since $A_r=0$ and $B_r$ has the inverse $B_r^{-1}\in \mathrm{L}(Y_r,X_r)$, then $\lambda A_r +B_r$ is a regular pencil of index 1.

The DAE \eqref{DAEgenSing}, \eqref{pencilGener}, is the vector form of the system
 \begin{align}
\frac{d}{dt}(x_1-x_3)+x_1-x_2-x_3 &= f_1(t,x),  \label{genSing1}\\
x_1+x_2-x_3 &= f_2(t,x),  \label{genSing2}\\
2x_2 &=f_3(t,x). \label{genSing3}
 \end{align}

Note that a point $(t,x)$ belongs to the manifold $L_{t_+}$ (introduced in Remark~\ref{RemConsistIni}) if and only if it satisfies the equations \eqref{DAEsysProj3}, \eqref{DAEsysProj4} or the equations equivalent to them, e.g.,  \eqref{DAEsysExtAE1}, \eqref{DAEsysExtAE2}.  It is readily verified that the equations \eqref{DAEsysProj3} and  \eqref{DAEsysProj4} (as well as  \eqref{DAEsysExtAE1}, \eqref{DAEsysExtAE2}) are equivalent to the equations
 \begin{align}
x_2 &=\frac{1}{2}f_3(t,x), \label{genSing3equiv}\\
x_1-x_3 &= f_2(t,x)-\frac{1}{2}f_3(t,x)  \label{genSing2equiv}
 \end{align}
respectively, which are the ``algebraic part'' of the DAE \eqref{DAEgenSing}, \eqref{pencilGener}, and are equivalent to the algebraic equations \eqref{genSing2}, \eqref{genSing3}. 
Also, notice that the ODE \eqref{DAEsysExtDE2} (or \eqref{DAEsysProj2}) is not present in the system \eqref{genSing1}--\eqref{genSing3} since the projector $Q_1=0$, and the ODE \eqref{DAEsysExtDE1} (or \eqref{DAEsysProj1}) is equivalent to
\begin{equation}\label{genSing1equiv}
\frac{d}{dt}(x_1-x_3)=-(x_1-x_3)+ f_1(t,x)+\frac{1}{2}f_3(t,x),
\end{equation}
that is, the equation \eqref{genSing1} into which \eqref{genSing3} (or \eqref{genSing3equiv}) is substituted.

The components (projections) of a vector $x=(x_1,x_2,x_3)^\T\in \R^3$ represented as \eqref{xsr} have the form
\begin{equation*}
x_{s_1} = S_1 x=(x_1-x_3,0,0)^\T,\; x_{s_2}=S_2x=(x_3,0,x_3)^\T,\;
x_{p_1}=P_1 x=0,\; x_{p_2}=P_2 x=(0,x_2,0)^\T,
\end{equation*}
$x_s= x_{s_1} + x_{s_2}$, $x_r=x_{p_2}$, where $S_i$, $P_i$, $i=1,2$, were presented in \eqref{ProjGenSing}. Make the change of variables
\begin{equation}\label{NewVariable}
w=x_1-x_3,\quad \xi=x_3,\quad u=x_2,
\end{equation}
then
$$
x_{s_1}=w\, (1,0,0)^\T,\; x_{s_2}=\xi\, (1,0,1)^\T,\; x_{p_2}=u\, (0,1,0)^\T.
$$
Taking into account the new notations \eqref{NewVariable}, we consider the function
 \begin{equation}\label{tilde_f}
\widetilde{f}(t,w,\xi,u):=f(t,w+\xi,u,\xi)=f(t,x_1,x_2,x_3)=f(t,x)\in C([t_+,\infty)\times \R^3,\R^3),
 \end{equation}
which, obviously, has the continuous partial derivative $\dfrac{\partial f}{\partial (w,\xi,u)}(t,w,\xi,u)$ for all $(t,w,\xi,u)\in [t_+,\infty)\times\R\times\R\times\R$.
In the new notations the system of the equations \eqref{genSing1equiv}, \eqref{genSing3equiv}, \eqref{genSing2equiv} takes the form
 \begin{align}
\frac{d}{dt}w &=-w+ \widetilde{f}_1(t,w,\xi,u)+ \frac{1}{2}\widetilde{f}_3(t,w,\xi,u), \label{genSing1New}\\
u &=\frac{1}{2}\widetilde{f}_3(t,w,\xi,u),  \label{genSing3New}\\
w &= \widetilde{f}_2(t,w,\xi,u)-\frac{1}{2}\widetilde{f}_3(t,w,\xi,u). \label{genSing2New}
 \end{align}

Now we find the conditions under which there exists a global solution of the DAE \eqref{DAEgenSing}, \eqref{pencilGener}, and, accordingly, the system  \eqref{genSing1}--\eqref{genSing3}. To do this, we use Theorems \ref{Th_SingGlobSol} and \ref{Th_SingGlobSolBInv}, and in addition the remarks regarding the functions $\chi$ and $V$ from Section \ref{Sect_chi_V}.

It follows from the mentioned above that condition \ref{SoglSing} of Theorem \ref{Th_SingGlobSol} is satisfied if:
\begin{enumerate}[label={\upshape(\roman*)},ref={\upshape(\roman*)}]
 \setlength\itemsep{0mm}
\item\label{genSoglSing} There exists a set $\widetilde{D}_{s_2}\subset\R$ such that for any fixed  $t\in [t_+,\infty)$, $w\in\R$ and $\xi\in\widetilde{D}_{s_2}$ there exists a unique $u\in\R$ such that the equalities \eqref{genSing3New}, \eqref{genSing2New} are satisfied.
\end{enumerate}

The matrix corresponding to the operator $\widehat{\Phi}_{t_*,x_*}$ defined (for fixed $t_*$, $x_*$) by \eqref{funcPhiSingExt} has the form
$$
\widehat{\Phi}_{t_*,x_*}=\begin{pmatrix}
   0 & -\dfrac{1}{2}\dfrac{\partial f_3}{\partial x_2}(t_*,x_*)+1 & 0 \\
   0 & \dfrac{1}{2}\dfrac{\partial f_3}{\partial x_2}(t_*,x_*)-1 & 0  \\
   0 & \dfrac{\partial f_3}{\partial x_2}(t_*,x_*)-2 & 0  \end{pmatrix},
$$
and since the equality $\widehat{\Phi}_{t_*,x_*}x_{p_2}=0$, $x_{p_2}\in X_2$, yields $x_{p_2}=0$ if the relation $\frac{\partial f_3}{\partial x_2}(t_*,x_*)-2\ne 0$ holds, then the operator $\Phi_{t_*,x_*}=\widehat{\Phi}_{t_*,x_*}\big|_{X_2}$ \eqref{funcPhiSing} has the inverse $\Phi_{t_*,x_*}^{-1}\in \mathrm{L}(Y_2,X_2)$ if this relation is satisfied. Note that $\frac{\partial f_3}{\partial x_2}(t,x_1,x_2,x_3)=\frac{\partial f_3}{\partial x_2}(t,w+\xi,u,\xi)=\frac{\partial \widetilde{f}_3}{\partial u}(t,w,\xi,u)$.

Thus, condition \ref{InvSing} of Theorem \ref{Th_SingGlobSol} is satisfied if:
\begin{enumerate}[label={\upshape(\roman*)},  ref={\upshape(\roman*)}] 
 \addtocounter{enumi}{1}
 \setlength\itemsep{0mm}
\item\label{genInvSing} For any fixed  $t_*\in [t_+,\infty)$, $w_*\in\R$, $\xi_*\in\widetilde{D}_{s_2}$, $u_*\in\R$ satisfying the equalities \eqref{genSing3New}, \eqref{genSing2New}, the following relation holds:
    $$
    \frac{\partial \widetilde{f}_3}{\partial u}(t_*,w_*,\xi_*,u_*)\ne 2.
    $$
\end{enumerate}

Also, since the space $X_2$ is one-dimensional, then condition \ref{BasInvSing} of Theorem \ref{Th_SingGlobSolBInv} is satisfied if condition \ref{genInvSing} holds (see Remark~\ref{RemarkInv-BasInv} for explanation). Consequently, we can use condition \ref{SoglSing2} of Theorem \ref{Th_SingGlobSolBInv} instead of more restrictive condition \ref{SoglSing} of Theorem \ref{Th_SingGlobSol} and, accordingly, replace condition \ref{genSoglSing} by the following:
\begin{enumerate}[label={\upshape(\roman*)$'$}, ref={\upshape(\roman*)$'$\;}] 
 \setlength\itemsep{0mm}
\item\label{genSoglSing2} There exists a set $\widetilde{D}_{s_2}\subset\R$ such that for any fixed  $t\in [t_+,\infty)$, $w\in\R$ and $\xi\in\widetilde{D}_{s_2}$ there exists $u\in\R$ such that the equalities \eqref{genSing3New}, \eqref{genSing2New} hold.
\end{enumerate}

Recall that $X_1=\{0\}$, the equation \eqref{DAEsysExtDE2} is not present in the system \eqref{genSing1New}--\eqref{genSing3New}, and the equation \eqref{DAEsysExtDE1} is equivalent to \eqref{genSing1New}. Thus, condition \ref{ExtensSing} of Theorem \ref{Th_SingGlobSol} (as well as Theorem \ref{Th_SingGlobSolBInv}) is fulfilled if:
\begin{enumerate}[label={\upshape(\roman*)},  ref={\upshape(\roman*)}]
 \addtocounter{enumi}{2}
 \setlength\itemsep{0mm}
\item\label{genExtensSing} There exists a number $R>0$ ($R$ can be sufficiently large), a positive definite function  $\widetilde{V}\in C^1([t_+,\infty)\times \widetilde{D}_{s_1},\R)$, where  $\widetilde{D}_{s_1}=\{|w|\ge R\}$, and a function $\chi\in C([t_+,\infty)\times (0,\infty),\R)$ such that:
    \begin{enumerate}[label={\upshape(\alph*)}, ref={\upshape(\alph*)}]
     \setlength\itemsep{0mm}
  \item ${\widetilde{V}(t,w)\to\infty}$ uniformly in $t$ on each finite interval $[a,b)\subset[t_+,\infty)$ as ${|w|\to\infty}$;

  \item for all $t\in [t_+,\infty)$, $w\in\R$, $\xi\in\widetilde{D}_{s_2}$, $u\in\R$ satisfying  \eqref{genSing3New}, \eqref{genSing2New} and ${|w|\ge R}$, the following inequality holds:
      \begin{multline*}
  \widetilde{V}'_{\eqref{genSing1New}}(t,w)= \frac{\partial \widetilde{V}}{\partial t}(t,w)+ \frac{\partial \widetilde{V}}{\partial w}(t,w)\cdot \Big[-w+\widetilde{f}_1(t,w,\xi,u)+ \frac{1}{2}\widetilde{f}_3(t,w,\xi,u)\Big] \le    \\
  \le \chi\big(t,\widetilde{V}(t,w)\big).
  \end{multline*}

  \item the differential inequality $dv/dt\le \chi(t,v)$  ($t\in [t_+,\infty)$) does not have positive solutions with finite escape time.
  \end{enumerate}
\end{enumerate}
Condition \ref{genExtensSing} is given in the most general form, and if we take the function $\widetilde{V}$ of the type \eqref{funcV} and the function $\chi$ of the form  \eqref{La-kU}, then we obtain a particular case of this condition, which is convenient for practical application. Namely, let $\widetilde{V}(t,w)=Hw^2$, where $H=const>0$, $w\in\R$, and $\chi(t,v)=k(t)\,U(v)$, where $k\in C([t_+,\infty),\R)$ and $U\in C(0,\infty)$. Then $\widetilde{V}'_{\eqref{genSing1New}}(t,w)=-2H\, w^2 +2H\, w \big[\widetilde{f}_1(t,w,\xi,u)+ \frac{1}{2}\widetilde{f}_3(t,w,\xi,u)\big]$, and, taking into account the remarks from Section \ref{Sect_chi_V}, condition \ref{genExtensSing} is converted into the following one:
\begin{enumerate}[label={\upshape(\roman*)$'$\;},ref={\upshape(\roman*)$'$\;}] 
 \addtocounter{enumi}{2}
 \setlength\itemsep{0mm}
\item\label{genExtensSing2} There exists a number $R>0$ ($R$ can be sufficiently large) and functions $k\in C([t_+,\infty),\R)$, $U\in C(0,\infty)$ such that $\int\limits_{{\textstyle v}_0}^{\infty}\dfrac{dv}{U(v)} =\infty$ \,($v_0>0$ is some constant) and the inequality
   $$
    -2H\, w^2 +2H\, w \Big[\widetilde{f}_1(t,w,\xi,u)+ \frac{1}{2}\widetilde{f}_3(t,w,\xi,u)\Big]\le k(t)\,U(Hw^2),
   $$
  where $H>0$ is some constant, holds for all $t\in [t_+,\infty)$, $w\in\R$, $\xi\in\widetilde{D}_{s_2}$, ${u\in\R}$ satisfying  \eqref{genSing3New}, \eqref{genSing2New} and ${|w|\ge R}$.
\end{enumerate}

Finally, the following conclusions can be drawn.

\emph{Let conditions \ref{genSoglSing2} and \ref{genInvSing}, where the function $\widetilde{f}(t,w,\xi,u)$ is defined by \eqref{tilde_f}, be fulfilled and let condition \ref{genExtensSing} or \ref{genExtensSing2} hold, then by Theorem \ref{Th_SingGlobSolBInv} (as well as by Theorem \ref{Th_SingGlobSol} if condition  \ref{genSoglSing2} is replaced by \ref{genSoglSing})\, for each initial point $(t_0,x_0)\in [t_+,\infty)\times \R^3$, where $x_0 =(x_{0,1},x_{0,2},x_{0,3})^{\T}$, for which the equalities  \eqref{genSing3equiv}, \eqref{genSing2equiv} hold and $x_{0,3}\in \widetilde{D}_{s_2}$, the initial value problem \eqref{DAEgenSing}, \eqref{pencilGener}, ${x(t_0)=x_0}$ has a unique global solution $x(t)$ with the component $x_{s_2}(t)=S_2x(t)=\varphi_{s_2}(t)\, (1,0,1)^\T$, where ${\varphi_{s_2}\in C([t_0,\infty),\widetilde{D}_{s_2})}$ is some function with the initial value  $\varphi_{s_2}(t_0)=x_{0,3}$.}


\begin{thebibliography}{}
\bibitem{A-PerJ} T.P. Azevedo-Perdico\'{u}lis,  G. Jank,  Modelling aspects of describing a gas network through a DAE system. IFAC Proceedings Volumes \textbf{40}(20), 40--45 (2007).

\bibitem{BandaHerty} M.K. Banda,  M. Herty, Multiscale modeling for gas flow in pipe networks. Mathematical Methods in the Applied Sciences \textbf{31}(8), 915--936 (2008).

\bibitem{BGHHST}  P. Benner,   S. Grundel,  C. Himpe,  C.Huck,   T. Streubel,  C. Tischendorf,  Gas Network Benchmark Models.  In: S. Campbell,  A. Ilchmann,  V. Mehrmann,  T. Reis (eds.), Applications of Differential-Algebraic Equations: Examples and Benchmarks. Differential-Algebraic Equations Forum, pp. 171--197.  Springer, Cham (2018).  

\bibitem{Brenan-C-P} K.E. Brenan,   S.L. Campbell,  L.R. Petzold,   Numerical Solution of Initial-Value Problems in Differential-Algebraic Equations. SIAM, Philadelphia, PA (1996).

\bibitem{ChistyakovCh11} V.F. Chistyakov,  E.V. Chistyakova,  Application of the least squares method to solving linear differential-algebraic equations. Numer. Analys. Appl. \textbf{6}, 77–90 (2013). \url{https://doi.org/10.1134/S1995423913010102}

\bibitem{Chuiko18} S.M. Chuiko, On a reduction of the order in a differential-algebraic system. J. of Mathematical Sciences \textbf{235}(1), 2--14 (2018). \url{https://doi.org/10.1007/s10958-018-4054-z}

\bibitem{GNM}  P. Domschke,  B. Hiller,  J. Lang,  V. Mehrmann,  R. Morandin, C. Tischendorf, Gas Network Modeling: An Overview. Technische Universit{\"a}t Darmstadt, Darmstadt, 2021.

\bibitem{Faddeev} D.K. Faddeev,  Lectures on algebra. Nauka, Moscow, 1984.


\bibitem{Fil}	M.S. Filipkovskaya, Continuation of solutions of semilinear differential-algebraic equations and applications in nonlinear radiotechnics. Bull. of V. Karazin Kharkiv National University. Series Math. Model. Inform. Tech. Automat. Control Syst. \textbf{19}(1015), 306--319 (2012).  
   

\bibitem{Fil.UMJ}  M.S. Filipkovska, Lagrange stability and instability of irregular semilinear differential-algebraic equations and applications. Ukrainian Math. J. \textbf{70}(6), 947--979 (2018).  \url{https://doi.org/10.1007/s11253-018-1544-6}

\bibitem{Fil.KNU2019} M.S. Filipkovska (Filipkovskaya),  A block form of a singular pencil of operators and a method of obtaining it. Visnyk of V.N. Karazin Kharkiv National University. Ser. ``Mathematics, Applied Mathematics and Mechanics''  \textbf{89},  33--58 (2019). \url{https://doi.org/10.26565/2221-5646-2019-89-04}


\bibitem{Fil.MPhAG} M.S. Filipkovska,  Lagrange stability of semilinear differential-algebraic equations and application to nonlinear electrical circuits. J. of Math. Phys., Anal., Geom. \textbf{14}(2), 169--196 (2018). \url{https://doi.org/10.15407/mag14.02.169}

\bibitem{Fil.sing} M. Filipkovskaya, Global solvability of singular semilinear differential equations and applications to nonlinear radio engineering. Challenges of modern technology. \textbf{6}(1), 3--13 (2015).


\bibitem{GantmaherII}  F.R. Gantmacher,  The theory of matrices, Vol. II. American Mathematical Society, Providence, Rhode Island, 2000.


\bibitem{GU2018}   M. Gugat,  S. Ulbrich, Lipschitz solutions of initial boundary value problems for balance laws. Mathematical Models and Methods in Applied Sciences \textbf{28}(5), 921--951 (2018).

 \bibitem{Halmos} P.R. Halmos, Finite-dimensional vector spaces. Van Nostrand, Princeton, 1958.


\bibitem{Huc18}  C. Huck, Perturbation analysis and numerical discretisation of hyperbolic partial differential algebraic equations describing flow networks. Dissertation. Humboldt Universit\"{a}t zu Berlin, Mathematisch-Naturwissenschaftliche Fakult\"{a}t (2018).

\bibitem{KSSTW22}  T. Kreimeier,  H. Sauter,  S.T. Streubel,  C. Tischendorf, A. Walther, Solving Least-Squares Collocated Differential Algebraic Equations by Successive Abs-Linear Minimization -- A Case Study on Gas Network Simulation, Humboldt-Universit{\"a}t zu Berlin, 2022 [Preprint].


\bibitem{Kunkel_Mehrmann} P. Kunkel,  V. Mehrmann, Differential-Algebraic Equations: Analysis and Numerical Solution. European Mathematical Society, Zurich, 2006.

\bibitem{MartinMM05}  A. Martin, M. M\"oller,  S. Moritz, Mixed Integer models for the stationary case of gas network optimization. Math. Program., Ser.~B \textbf{105}, 563--582 (2006). 


\bibitem{Riaza} Riaza R.  Differential-Algebraic Systems: Analytical Aspects and Circuit Applications. Hackensack, NJ, World Scientific, 2008.


\bibitem{LaSal-Lef}   J. La Salle,  S. Lefschetz, Stability by Liapunov's direct method with applications. Academic Press, New York, 1961. 

\bibitem{Schwartz1}  L. Schwartz, Analyse Math\'{e}matique, I. Hermann, Paris, 1967. 

\bibitem{Rut}   A.G. Rutkas, Cauchy problem for the equation $Ax'(t) +Bx(t) = f(t)$. Differential Equations \textbf{11}(11),  1996--2010 (1975).


\bibitem{rut-sing}  A.G. Rutkas, Solvability of semilinear differential equations with singularity. Ukrainian Math. J. \textbf{60}, 262--276 (2008). \url{https://doi.org/10.1007/s11253-008-0057-0}

\bibitem{RF1} Rutkas A.G.,  Filipkovskaya (Filipkovska) M.S. Extension of solutions of one class of differential-algebraic equations. J. of Computational \& Applied Mathematics \textbf{1}, 135--145 (2013).

\bibitem{Vlasenko1} L.A.  Vlasenko,  Evolution Models with Implicit and Degenerate Differential Equations. System Technologies, Dnipropetrovsk, Ukraine (2006). 

\bibitem{Yoshizawa}  T. Yoshizawa, Stability theory by Liapunov's second method. Tokyo, The Mathematical Society of Japan, 1966.
\end{thebibliography}
\end{document}